\newtheorem{theorem}{Theorem}[section]
\newtheorem{lemma}[theorem]{Lemma}
\newtheorem{corollary}[theorem]{Corollary}
\theoremstyle{definition}
\newtheorem{definition}[theorem]{Definition}
\newtheorem{examples}[theorem]{Examples}
\theoremstyle{remark}
\newtheorem{remark}[theorem]{Remark}
\theoremstyle{keyobservation}
\newtheorem{remarks}[theorem]{Remarks}
\newtheorem{shypothesis}[theorem]{Standing Hypothesis}
\numberwithin{equation}{section}
\numberwithin{equation}{subsection}
\newcommand{\be}%
  {\protect\setcounter{equation}{\value{subsubsection}}}  
  \newcommand{\ee}%
   {\protect\setcounter{subsubsection}{\value{equation}}}
\numberwithin{equation}{subsection}
\newcommand{\GL}{\operatorname{GL}}
\newcommand{\bG}{\mathbf G}
\newcommand{\bZ}{\mathbf Z}
\newcommand{\cN}{\mathcal N}
\newcommand{\cX}{\mathcal X}
\newcommand{\bK}{\mathbf K}
\renewcommand{\L}{\mathcal L}
\newcommand{\Z}{\mathcal Z}
\renewcommand{\O}{\mathcal{O}}
\def\displaytimes_#1{\mathrel{\mathop{\times}\limits_{#1}}}
\def\displayotimes_#1{\mathrel{\mathop{\bigotimes}\limits_{#1}}}
 \def\ari[#1]{\ar@{^(->}[#1]}
 \def\are[#1]{\ar[#1]^{\txt{\'et}}}
 \def\areh[#1]{\ar[#1]|{\txt{$H$-eq}}^{\txt{\'et}}}
 \def\ars[#1]{\ar@{->>}[#1]}
 \newcommand{\dplus}{\ar@{}[d]|{\mbox{$\oplus$}}}
 \newcommand{\dtimes}{\ar@{}[d]|{\mbox{$\times$}}}
\def \rmB{\rm B}
\def \rmBG{\rm {BG}}
\def \Cl{\mathbb C}
\def \colimK.{\underset {\underset K^.  \rightarrow}  {\hbox {lim}}}
\def \colimU.{\underset {\underset U_.  \rightarrow}  {\hbox {lim}}}
\def \cosimp1{\stackrel{\rightrightarrows}{ \leftarrow}}
\def \compl{\, \, {\widehat {}}}
\def \bDX{{\mathbb D}^{\rmG}_{\rmX}}
\def \rmE{\rm E}
\def \EG1{E{(G \times {\mathbb C}^*)}{\underset {G\times {\mathbb C}^*} \to \times}}
\def \EZ(s)1{E{(Z(s) \times {\mathbb C}^*)}{\underset {(Z(s)\times {\mathbb C}^*)} \to \times}}
\newcommand{\eps}{\boldsymbol\varepsilon}
\def \eps{\ \epsilon \ }
\def \EM(u){EM(u){\underset {M(u)} \to \times}}
\def \EM(us){EM(u,s){\underset {M(u, s)} \to \times}}
\def \bG{{\mathbf G}}
\def \rmG{\rm G}
\def\hlimD2{\mathop{\textrm{holim}}\limits_{\Delta_{\le m_2} }}
\def \H{\mathbb H}
\def \rmH{\rm H}
\def \bH{\bf H}
\def \holimm {\underset {\infty \leftarrow m}  {\hbox {holim}}}
\def \invlim1{\underset {\infty \leftarrow q} \to {\hbox {lim}}^1}
\def \rmI{\rm I}
\def \rmi{\rm i}
\def \rmj{\rm j}
\def \rmK{\rm K}
\def \k{\it k}
\def \bKH{\bf KH}
\def \L3{\Lambda \times \Lambda \times \Lambda}
\def \L2{\Lambda \times \Lambda}
\def \limm{\underset {\infty \leftarrow m}  {\hbox {lim}}}
\def \longright2arrow{{\overset \longrightarrow \to {\overset {} \to \longrightarrow}}}
\def \L{L\times \Cl ^*}
\def \rmM{\rm M}
\def \O{{\mathcal O}}
\def \rmp{\rm p}
\def \Q{{\mathbb  Q}}
\def \ra{\rightarrow}
\def \Ra{\Rightarrow}
\def \RG^{R(G)^{\hat {}}\ }
\def \res{respectively}
\def \rmR{\rm R}
\def \rmS{\rm S}
\def \Speck{{\rm Spec}\, {\it k}}
\def \topGcoh*{^{top, *} _{G}}
\def \topGho*{ _{top,*} ^{G}}
\def \rmT{\rm T}
\def \rmU{\rm U}
\def \rmV{\rm V}
\def \rmX{\rm X}
\def \cX{\mathcal X}
\def \rmY{\rm Y}
\def \Z(s){Z(s) \times {\mathbb C}^*}
\def \Z{\mathbb Z}
\def \rmZ{\rm Z}
\def \bZ{\mathbb Z}
\begin{document}

\title{Equivariant Algebraic K-Theory and Derived completions III: Applications }
\author{Gunnar Carlsson}
\address{Department of Mathematics, Stanford University, Building 380, Stanford,
California 94305}
\email{gunnar@math.stanford.edu}
\thanks{  }  
\author{Roy Joshua}
\address{Department of Mathematics, Ohio State University, Columbus, Ohio,
43210, USA.}
\email{joshua.1@math.osu.edu}
\author{Pablo Pelaez}
\address{Instituto de Matem\'aticas, Ciudad Universitaria, UNAM, DF 04510, M\'exico.}
\email{pablo.pelaez@im.unam.mx}


\thanks{AMS Subject classification: 19E08, 14C35, 14L30.  The second author thanks the Simons Foundation for support.}

\maketitle
 
\begin{abstract}
\vskip .1cm
In the present paper, we discuss applications of the derived completion theorems proven in our previous two papers.
One of the main applications is to Riemann-Roch problems for forms of higher equivariant K-theory, which we are able to establish in
great generality both for equivariant G-theory and equivariant homotopy K-theory with respect to actions of linear algebraic groups on
normal quasi-projective schemes over a given field.  We show such Riemann-Roch theorems
apply to all toric and spherical varieties.
\vskip .1cm
We also obtain Lefschetz-Riemann-Roch
theorems involving the fixed point schemes with respect to actions of diagonalizable group schemes.  
We also show the existence of 
certain spectral sequences that compute the homotopy groups of the derived completions of equivariant G-theory starting with equivariant Borel-Moore 
motivic cohomology.
\end{abstract} 
\setcounter{tocdepth}{1}
\tableofcontents\setcounter{tocdepth}{1}
\markboth{Gunnar Carlsson, Roy Joshua and Pablo Pelaez}{Equivariant Algebraic K-Theory and Derived completions III:Applications}
\input xypic
\vfill \eject
\section{Introduction and statement of results}
\vskip .2cm

\vskip .1cm
 \vskip .2cm
 This paper is a sequel to the papers \cite{CJ23} and \cite{CJP23} and is devoted to applications of the derived completion theorems proven in the above two papers.
 In fact we discuss several applications. One class of applications we discuss in detail is to various different forms of 
 Equivariant Riemann-Roch theorems: a significant part of the paper is devoted to these. In fact, it may be important to point out that 
 the derived completion theorems proved in \cite{CJ23} and \cite{CJP23} are such that strong forms of various equivariant Riemann-Roch theorems
 can be derived as important consequences. The main observation here is that one can prove 
Riemann-Roch theorems first as natural transformations, with source the G-theory of the category of equivariant coherent sheaves 
(or the Homotopy K-theory of the category of equivariant perfect complexes) and taking values in the corresponding G-theory (or the Homotopy K-theory)
on a suitable Borel construction associated to the given action of a linear algebraic group. 
\vskip .2cm
Once this is done, various other forms of equivariant 
Riemann-Roch theorems can be obtained. For example, we compose the above Riemann-Roch transformations with 
Riemann-Roch transformations into various forms of Borel-Moore homology theories on the Borel construction: since the source and the target 
of these latter Riemann-Roch transformations are both defined on the Borel construction, these can be deduced from the corresponding
non-equivariant Riemann-Roch transformation from the G-theory or the homotopy K-theory of a scheme to a suitable form of Borel-Moore homology.
We also consider Lefschetz-Riemann-Roch theorems
when the given group is a split torus, which relate the Riemann-Roch transformations on the given scheme to those on the
fixed points of the torus. These extend various results and formulae previously known, such as the classical results in \cite{AB}, \cite{AS69} as
well as the Riemann-Roch theorems of Gillet in \cite{Gi} and the coherent trace formulae of Thomason in \cite[Theorem 6.4]{Th86-1}.
\vskip .2cm
{\it Our main results will be stated only under the following basic assumptions.} 
\begin{shypothesis}
 \label{stand.hyp.1} 
\begin{enumerate}[\rm(i)] {\rm
\item We will assume the base scheme $\rmS$ is a perfect infinite field $k$ of arbitrary characteristic $p$. 
Let $\rmG$ denote a not-necessarily-connected linear algebraic group defined over $k$.  
\vskip .1cm
\item
We will fix an {\it ambient bigger group}, denoted  $\tilde \rmG$ throughout, and which contains the given linear algebraic group $\rmG$ as a closed sub-group-scheme and 
satisfies the following  strong conditions: $\tilde \rmG$ will denote a connected split reductive group over the field $k$ so that it is {\it special} (in the
 sense of Grothendieck: see \cite{Ch}), and 
 if $\tilde \rmT$ denotes a
maximal torus in $\tilde \rmG$, then $\rmR(\tilde \rmT)$ is free of finite rank over $\rmR(\tilde \rmG)$ and $\rmR(\tilde \rmG)$ is Noetherian. (Here $\rmR(\tilde \rmT)$ and $\rmR(\tilde \rmG)$ denote the
 corresponding representation rings.)
 \vskip .1cm
\item
The above hypothesis is satisfied by 
$\tilde \rmG= {\rm GL}_n$ or ${\rm {SL}}_n$, for any $n$, or any finite product of these groups. It is also trivially satisfied by all split tori.(A basic hypothesis that guarantees this condition is that the algebraic fundamental group 
$\pi_1(\tilde \rmG)$ is 
torsion-free: see \cite[2.3]{CJ23}.) 
\vskip .1cm
\item 
In general, we will restrict to {\it normal} quasi-projective schemes of finite type
over $\rmS$,  and provided with an action by a 
linear algebraic group in \cite[Theorem 1.4]{CJP23}.
\vskip .1cm
However, in order to consider Riemann-Roch transformations with values in suitable equivariant Borel-Moore homology theories,
we need to  restrict to the following class of schemes:
$\rmX$ is a {\it normal} $\rmG$-scheme over $\rmS$ so that it is {\it either} $\rmG$-{\it quasi-projective} (that is, admits a $\rmG$-equivariant locally closed immersion into a projective space over $\rmS$ on which 
$\rmG$-acts linearly: see Definition ~\ref{G.quasiproj}), {\it or} $\rmG$ is connected and $\rmX$ is a normal quasi-projective scheme over $\rmS$ (in which case it is
also $\rmG$-quasi-projective by \cite[Theorem 2.5]{SumII}).}
\end{enumerate}
\end{shypothesis}
\vskip .2cm

\vskip .2cm 
The target of the Riemann-Roch transformations will be, in general, Borel-Moore homology 
theories applied to the Borel-construction. To carry this out in some generality, we start with the framework in \cite{Gi} and then
 expand it a bit to suit the equivariant framework: we summarize this here, and more details may be found in section 2, especially in Definitions ~\ref{G.quasiproj} through 
 Lemma ~\ref{coh.hom.egs}. Accordingly 
 we will assume such cohomology theories are defined by graded complexes
$\Gamma = \oplus_r \Gamma(r)$ of abelian presheaves on one of the following big sites on the base scheme $\rmS= \Speck$, such as 
 either the big Zariski site, the restricted big Zariski site (where the objects are the same as in the big Zariski site, but where
morphisms are required to be flat maps), or the corresponding sites associated to the big \'etale site of $\rmS$. Then the cohomology groups are defined as the corresponding hypercohomology
 with respect to the above complexes, that is, $\rmH^{i, j}(\rmX, \Gamma(*)) = \H^i(\rmX, \Gamma(j))$.  We may also additionally require that
 such bi-graded cohomology theories are contravariantly functorial for all maps between smooth schemes. Moreover, for a 
 fixed linear algebraic group $\rmG$, we will
 restrict to  $\rmG$-quasi-projective schemes.
 \vskip .2cm
The Borel-Moore homology theories we consider will be defined by graded complexes $\Gamma' = \oplus _r \Gamma'(r)$ of abelian presheaves on the same site, and come equipped with pairings: $\Gamma'(r) \otimes \Gamma(s) \ra \Gamma'(r-s)$.
The homology groups are 
defined by $\rmH_{s,t}(\rmX, \Gamma') = \H^{-s}(\rmX, \Gamma'(t))$. We will assume such homology groups are {\it covariantly functorial}
 for proper maps. We require that the above complexes $\{\Gamma (r)|r\}$ and $\{\Gamma'(s)|s\}$
  satisfy a number of properties discussed in detail in Definition ~\ref{gen.hom}. We extend such cohomology (homology) theories
  to the equivariant framework by defining them on a suitable form of the Borel construction for a given linear algebraic group $\rmG$.
 One important additional hypothesis we impose then is the following {\it stability condition}: given a scheme $\rmX$ of finite type over $\k$ and provided with an 
 action by the linear algebraic group $\rmG$, and with $\rmE \tilde \rmG^{gm, m}$ as in \cite[section 3.1]{CJP23}, and for each pair of  fixed
 integers $s, t$, the inverse systems $\{\rmH_{s,t}(\rmE \tilde \rmG^{gm,m}\times _{\rmG}\rmX, \Gamma') |m\}$  and  $\{ \rmH^{s,t}(\rmE \tilde \rmG^{gm,m}\times _{\rmG}\rmX, \Gamma) |m\}$ stabilize as $m \ra \infty$.
 Then we verify in Lemma ~\ref{coh.hom.egs} that a number of cohomology (homology theories) satisfy the above properties. Now we obtain the following
 theorems which are representative of the Riemann-Roch theorems discussed more fully in section 2.
 \vskip .2cm
In view of the difficulties with completions, Equivariant Riemann-Roch theorems such as the following theorem could be 
proven till now only in very restrictive settings, for example, either only at the level of the Grothendieck groups or only for
very restrictive classes of algebraic varieties and schemes (such as projective smooth schemes), or in the non-equivariant framework as in \cite{Gi}. One may observe that
the hypotheses of the theorem are satisfied by all normal quasi-projective schemes  when the group $\rmG$ is assumed to be
connected (in view of Sumihiro's theorem: see \cite[Theorem  2.5]{SumII}), and therefore apply to all toric and spherical varieties.
 
 \begin{theorem}
 \label{composite.RR.}
 Let $\rm X \ra \{H_{s,t}(\rmX, \Gamma')|s,t\}$ denote a homology theory defined in the framework discussed in Definition ~\ref{gen.hom} which extends
 to define an equivariant homology theory satisfying the hypotheses in Definition ~\ref{gen.equiv.coh.hom}.
 \vskip .1cm
 (i) Let $f:\rmX \ra \rmY$ denote
a proper $\rmG$-equivariant map between two normal $\rmG$-quasi-projective schemes. Let $\tilde \rmG$ denote
either a ${\rm GL}_n$ or a finite product of groups of the form ${\rm GL}_n$ containing $\rmG$ as a closed subgroup-scheme. Then one obtains
the commutative diagram, where $\tau_{\rmX}$, $\tau_{\rmY}$ are suitable Riemann-Roch transformations:
\be \begin{equation}
   \label{RR.1.1}
\xymatrix{ {\pi_*{\bf G}(\rmX, \rmG)} \ar@<1ex>[r] \ar@<1ex>[d]^{Rf_*}  & {\limm \pi_*({\bf G}({\rm E}{\tilde \rmG}^{gm,m}{\underset {\rmG} \times}\rmX))} \ar@<1ex>[d]^{Rf_*} \ar@<1ex>[r]^{\tau_{\rmX}} &{\limm \rmH_{*, \bullet}({\rm E}{\tilde \rmG}^{gm,m}{\underset {\rmG} \times}\rmX, \Gamma')_{{\mathbb Q}}} \ar@<1ex>[d]^{f_*}\\
              {\pi_*{\bf G}(\rmY, \rmG)} \ar@<1ex>[r]  &  {\limm\pi_*({\bf G}({\rm E}{\tilde \rmG}^{gm,m}{\underset {\rmG} \times}\rmY))}  \ar@<1ex>[r]^{\tau_{\rmY}} & { \limm\rmH_{*, \bullet}({\rm E}{\tilde \rmG}^{gm,m}{\underset {\rmG} \times}\rmX, \Gamma')_{{\mathbb Q}} }.}
\end{equation} \ee
\vskip .1cm
(ii) Assume in addition to the hypotheses in (i) that the map $f$ is also perfect, for example, a map that is a local complete intersection morphism.
Then one obtains the commutative diagram, where $\tau_{\rmX}$, $\tau_{\rmY}$ are suitable Riemann-Roch transformations:
\be \begin{equation}
   \label{RR.2.1}
\xymatrix{ {\pi_*{\bKH}(\rmX, \rmG)} \ar@<1ex>[r] \ar@<1ex>[d]^{Rf_*}  & {\limm \pi_*({\bKH}({\rm E}{\tilde \rmG}^{gm,m}{\underset {\rmG} \times}\rmX))} \ar@<1ex>[d]^{Rf_*} \ar@<1ex>[r]^{\tau_{\rmX}} &{\limm \rmH_{*, \bullet}({\rm E}{\tilde \rmG}^{gm,m}{\underset {\rmG} \times}\rmX, \Gamma')_{{\mathbb Q}}} \ar@<1ex>[d]^{f_*}\\
              {\pi_*{\bKH}(\rmY, \rmG)} \ar@<1ex>[r]  &  {\limm \pi_*({\bKH}({\rm E}{\tilde \rmG}^{gm,m}{\underset {\rmG} \times}\rmY))}  \ar@<1ex>[r]^{\tau_{\rmY}} & { \limm\rmH_{*, \bullet}({\rm E}{\tilde \rmG}^{gm,m}{\underset {\rmG} \times}\rmX, \Gamma')_{{\mathbb Q}} }.}
\end{equation} \ee
\end{theorem}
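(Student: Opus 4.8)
The plan is to split each of the two diagrams into its left square and its right square and to treat the two squares by entirely different means. The left square will record the naturality, under proper $\rmG$-equivariant pushforward, of the comparison maps of \cite{CJ23}, \cite{CJP23} between equivariant $K$-/$G$-theory and the tower of Borel approximations --- the maps underlying the derived completion theorems, in particular \cite[Theorem 1.4]{CJP23} --- and this is where all the genuinely equivariant content of the statement is used. The right square, by contrast, carries no equivariance at all once the definitions are unwound: writing $\rmX_m := {\rm E}{\tilde \rmG}^{gm,m}{\underset {\rmG} \times}\rmX$ and $\rmY_m := {\rm E}{\tilde \rmG}^{gm,m}{\underset {\rmG} \times}\rmY$, each $\rmX_m$, $\rmY_m$ is an ordinary scheme, $\tau_{\rmX}$ and $\tau_{\rmY}$ are $\limm$ of the non-equivariant Riemann--Roch transformations of Gillet \cite{Gi}, and commutativity will be imported levelwise and then pushed through $\limm$.

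For the left square, recall that since $\tilde\rmG$ is a finite product of copies of ${\rm GL}_n$, hence special, the approximations ${\rm E}{\tilde \rmG}^{gm,m}$ of \cite[section 3.1]{CJP23} are available, and that $\rmX\mapsto \pi_*{\mathbf G}(\rmX,\rmG)$, $\rmX\mapsto \limm\pi_*{\mathbf G}(\rmX_m)$ and the canonical natural transformation between them are defined on the category of normal $\rmG$-quasi-projective $\rmG$-schemes and proper $\rmG$-equivariant morphisms, compatibly with proper pushforward (this compatibility being built into the construction of the comparison maps in \cite{CJP23}). Since $f\colon\rmX\ra\rmY$ is proper and $\rmG$-equivariant, the induced map $\rmX_m\ra\rmY_m$ is proper for each $m$, so $Rf_*$ is defined on each $\pi_*{\mathbf G}(\rmX_m)$ --- and, under the extra hypothesis of (ii), on each $\pi_*{\mathbf{KH}}(\rmX_m)$, since the derived pushforward of a perfect complex along a proper perfect morphism is again perfect. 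Commutativity of the left square of \eqref{RR.1.1}, and of \eqref{RR.2.1}, is then exactly this naturality, applied at each level $m$ and passed to $\limm$.

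The right square is the substance of the argument. First, each $\rmX_m$ is a normal quasi-projective $k$-scheme: the $\rmG$-action on ${\rm E}{\tilde \rmG}^{gm,m}\times\rmX$ is linearizable --- $\rmX$ is $\rmG$-quasi-projective in the sense of Definition~\ref{G.quasiproj} and ${\rm E}{\tilde \rmG}^{gm,m}$ is a $\rmG$-stable open subscheme of a representation --- so the quotient $\rmX_m$ exists and is quasi-projective, and normality descends along the $\rmG$-torsor ${\rm E}{\tilde \rmG}^{gm,m}\times\rmX\to\rmX_m$; hence the non-equivariant higher Riemann--Roch transformation $\tau_{\rmX_m}\colon\pi_*{\mathbf G}(\rmX_m)\ra\rmH_{*,\bullet}(\rmX_m,\Gamma')_{\bbQ}$ of \cite{Gi} is defined, and similarly for $\rmY_m$. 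Second, the $\tau_{\rmX_m}$ are natural for the structure maps of the tower $\{\rmX_m\}_m$ --- a special case of the functoriality of Gillet's construction under the maps at issue (flat pullback, resp.\ proper pushforward), with no Todd correction because the relevant relative normal bundles are trivial --- so the $\tau_{\rmX_m}$ assemble to a morphism of pro-systems $\{\pi_*{\mathbf G}(\rmX_m)\}_m\to\{\rmH_{*,\bullet}(\rmX_m,\Gamma')_{\bbQ}\}_m$, whence a homomorphism $\tau_{\rmX}:=\limm\tau_{\rmX_m}$ of the inverse limits; at this point the stability hypothesis of Definition~\ref{gen.equiv.coh.hom} is used to ensure that $\{\rmH_{*,\bullet}(\rmX_m,\Gamma')\}_m$ is essentially constant, so that rationalization commutes with $\limm$, the relevant $\lim^1$ vanishes, and the target appearing in the theorem is unambiguous. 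Third, Gillet's theorem gives $f_*\circ\tau_{\rmX_m}=\tau_{\rmY_m}\circ(Rf_*)$ for each $m$ --- the covariance of the higher Riemann--Roch transformation for proper morphisms, in the Baum--Fulton--MacPherson form with no correction term --- and applying $\limm$ to these identities, using the morphisms of pro-systems just constructed, yields commutativity of the right square of \eqref{RR.1.1}.

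Part (ii) reduces to part (i). When $f$ is perfect, $Rf_*$ is defined on $\mathbf{KH}$ and is compatible, via the Cartan map ${\mathbf{KH}}(\rmX_m)\ra{\mathbf G}(\rmX_m)$ and the projection formula for the proper perfect morphism $\rmX_m\ra\rmY_m$, with $Rf_*$ on ${\mathbf G}$; taking $\tau^{\mathbf{KH}}_{\rmX_m}$ to be the composite $\pi_*{\mathbf{KH}}(\rmX_m)\ra\pi_*{\mathbf G}(\rmX_m)\xrightarrow{\tau_{\rmX_m}}\rmH_{*,\bullet}(\rmX_m,\Gamma')_{\bbQ}$, the finite-level squares commute by part (i), and \eqref{RR.2.1} follows by passing to $\limm$ exactly as above. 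I expect the principal obstacle to be not the finite-level Riemann--Roch input, which is classical, but the bookkeeping around $\limm$: one must check that Gillet's transformation is compatible with the tower maps without spurious Todd factors, and one must use the stability hypothesis both to make the rationalized equivariant Borel--Moore homology target meaningful and to transport the levelwise commuting squares to the limit. This is exactly where the hypotheses of Definition~\ref{gen.equiv.coh.hom} enter essentially, and it explains why the theorem must be phrased in terms of the full towers rather than individual Borel approximations.
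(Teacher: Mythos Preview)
Your plan is essentially the paper's own: split each diagram into its left and right squares, obtain the left square from the naturality (under proper equivariant pushforward) of the comparison maps underlying the derived completion theorems of \cite{CJ23}, \cite{CJP23} --- the paper packages this as Theorems~\ref{RR.Gth} and~\ref{RR.KH} and then passes from $\pi_*\holimm$ to $\limm\pi_*$ --- obtain the right square by applying Gillet's non-equivariant Riemann--Roch levelwise on the schemes $\rmX_m$, $\rmY_m$ and taking the inverse limit, and reduce (ii) to (i) via the Cartan map $\bKH\to\bG$.

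The one place where your shortcut does not work as stated is the tower-compatibility of the $\tau_{\rmX_m}$. You assert this follows from functoriality of Gillet's transformation ``with no Todd correction because the relevant relative normal bundles are trivial.'' But the structure maps of the tower $\{\rmX_m\}_m$ in Borel--Moore homology are the \emph{Gysin} maps of~\eqref{Gysin.eq.homology}, coming from the regular closed immersions $\rmE\tilde\rmG^{gm,m}\times_{\rmG}\tilde\rmX\hookrightarrow\rmE\tilde\rmG^{gm,m+1}\times_{\rmG}\tilde\rmX$; after dividing by $\rmG$ the normal bundles of these immersions are not trivial (for $\tilde\rmG=\GL_n$ the bases are Grassmannians and the bundles involved are twists of the tautological bundle), so your claim is not available. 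The paper instead handles this in two steps: Lemma~\ref{RR.extends.inverse.systems.1} builds $\tau_{\rmX}=\limm\tau_{\rmX,m}$ directly, by assembling the local Chern character, the Todd class of the ambient smooth $\tilde\rmX$, and the equivariant fundamental class $[\tilde\rmX]_{\rmG}$ all at the level of the inverse limit (this is exactly where the stability hypotheses of Definition~\ref{gen.equiv.coh.hom} enter); then Corollary~\ref{RR.extends.inverse.systems.2} checks, by a direct computation with Koszul--Thom classes showing $i_{\rmY}^*(\lambda_{-1}(\cN_{m+1}))=\lambda_{-1}(\cN_m)$, that this limit transformation commutes with proper pushforward. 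Once you replace your normal-bundle shortcut with this argument, your proof goes through and coincides with the paper's.
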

\vskip .2cm
Observe that any finite group $\rmG$ may be imbedded in a symmetric group, $\Sigma_n$, for some $n >0$, as a subgroup,
and the symmetric group $\Sigma_n$ imbeds into $\tilde \rmG=\GL_n$ as the closed subgroup of all permutation matrices.  Then we obtain:
\begin{corollary} (Finite group actions)
 \label{finite.group.case} Assume $\rmG$ is a finite group imbedded as a closed subgroup of some $\GL_n$. Let $\tilde \rmG = \GL_n$ and let
  $\rmX$ denote a quasi-projective normal scheme as in ~\ref{stand.hyp.1}(iv) provided with an action by $\rmG$.
 Then Theorems \cite[Theorem 1.4]{CJP23}, and ~\ref{composite.RR.} apply.
 \end{corollary}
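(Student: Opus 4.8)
The plan is to reduce the corollary to a verification of Standing Hypothesis~\ref{stand.hyp.1} for the triple $(\rmG,\tilde\rmG,\rmX)$; once this is in place, \cite[Theorem 1.4]{CJP23} and both parts of Theorem~\ref{composite.RR.} apply word for word, and the asserted Riemann--Roch diagrams are literally their conclusions. So the ``proof'' is really a bookkeeping of hypotheses, and the only place where the finiteness of $\rmG$ enters in an essential way will be in checking part (iv).

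First I would record the group-theoretic input, which is the observation made just before the statement: left translation embeds $\rmG$ into $\Sigma_n$ with $n=\#\rmG$, and $\Sigma_n$ sits inside $\GL_n$ as the closed subgroup of permutation matrices (equivalently, the regular representation realizes $\rmG$ as a closed sub-group-scheme of $\tilde\rmG:=\GL_n$). A finite constant group scheme over the perfect field $k$ is a possibly disconnected linear algebraic group, so~\ref{stand.hyp.1}(i) holds. For (ii)--(iii): $\tilde\rmG=\GL_n$ is connected split reductive with torsion-free algebraic fundamental group $\pi_1(\GL_n)\cong\bbZ$, whence by \cite[2.3]{CJ23} it is special in Grothendieck's sense, $\rmR(\tilde\rmG)$ is Noetherian, and $\rmR(\tilde\rmT)$ is free of finite rank over $\rmR(\tilde\rmG)$ for $\tilde\rmT$ the diagonal maximal torus; this is exactly the case singled out in~\ref{stand.hyp.1}(iii). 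Moreover $\tilde\rmG=\GL_n$ is an admissible choice of ambient group in both parts (i) and (ii) of Theorem~\ref{composite.RR.}.

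It remains to treat~\ref{stand.hyp.1}(iv). Since $\rmG$ is finite it is disconnected as soon as $\#\rmG>1$, so we are in the clause requiring $\rmX$ to be $\rmG$-quasi-projective, and this is built into the hypothesis ``$\rmX$ as in~\ref{stand.hyp.1}(iv)''. I would nonetheless add the remark --- which is the substantive point and the finite-group analogue of the appeal to Sumihiro's theorem (\cite[Theorem 2.5]{SumII}) made in the connected case --- that for finite $\rmG$ every normal quasi-projective $\rmG$-scheme is automatically $\rmG$-quasi-projective: given an ample line bundle $\L$ on $\rmX$, the line bundle $\bigotimes_{g\in\rmG}g^{*}\L$ is again ample and carries a canonical $\rmG$-linearization (the isomorphisms $h^{*}\bigl(\bigotimes_g g^{*}\L\bigr)\cong\bigotimes_g g^{*}\L$ are a strict reindexing, so satisfy the cocycle condition), and a $\rmG$-linearized ample line bundle on a noetherian quasi-projective scheme produces a $\rmG$-equivariant locally closed immersion into $\bbP(W)$ with $W$ a finite-dimensional $\rmG$-representation (choose finitely many sections of a large enough power that already give a locally closed immersion, using quasi-compactness of $\rmX$, and replace them by their $\rmG$-span, which is still finite-dimensional since $\rmG$ is finite).

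With Standing Hypothesis~\ref{stand.hyp.1} verified for $(\rmG,\GL_n,\rmX)$, and for any homology theory in the framework of Definitions~\ref{gen.hom} and~\ref{gen.equiv.coh.hom}, \cite[Theorem 1.4]{CJP23} applies to $\rmX$, and the commutative diagrams~\eqref{RR.1.1} and~\eqref{RR.2.1} of Theorem~\ref{composite.RR.} hold for every proper $\rmG$-equivariant map between such schemes --- the second diagram under the additional assumption that the map is perfect, for instance a local complete intersection morphism. The one step I expect to require genuine care, should a referee want it spelled out, is the $\rmG$-quasi-projectivity in the previous paragraph; the verification of (i)--(iii) is purely formal, and the Riemann--Roch content is black-boxed from the cited theorems.
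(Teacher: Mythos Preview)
Your proposal is correct and follows the same approach as the paper, only in considerably more detail: the paper dispatches this corollary in a single sentence, observing that it ``follows readily by imbedding the finite group as a closed subgroup of a $\GL_n$ as discussed in the lines before the statement of this Corollary in the introduction.'' Your careful bookkeeping of parts (i)--(iv) of the Standing Hypothesis, and the added remark that $\rmG$-quasi-projectivity is automatic for finite $\rmG$ acting on normal quasi-projective schemes (via averaging an ample line bundle), are welcome elaborations but go beyond what the paper itself provides.
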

\vskip .2cm
The following {\it Lefschetz-Riemann-Roch and the Coherent trace formula} extends a similar result proven by 
 Thomason with the target in his case being Borel-style {\it equivariant topological G-theory}
 with finite coefficients, that is, the Borel style equivariant algebraic G-theory with finite coefficients and with the Bott-element inverted. See \cite[Theorem 6.4]{Th86-1}.
 We are able to lift such theorems to Borel-style equivariant algebraic G-theory and to Borel-style equivariant homotopy K-theory, in general.
\vskip .1cm
\begin{theorem} (Lefschetz-Riemann-Roch and the Coherent trace formula)
 \label{LRR}
  Let $f: \rmX \ra \rmY$ denote a $\rmT$-equivariant proper map between two $\rmT$-schemes of finite type over $\k$ 
  for the action of a split torus $\rmT$. Assume 
  that $f$ factors $\rmT$-equivariantly as $\pi \circ i_1$, where $i_1: \rmX \ra \rmZ$ is
  closed immersion into a regular $\rmT$-scheme $\rmZ$, followed by a proper $\rmT$-equivariant map $\pi: \rmZ \ra \rm Y$. Let $i: \rmZ^{\rmT} \ra \rmZ$ denote
  the closed immersion of the fixed point scheme of $\rmZ$ into $\rmZ$, with $\cN$ denoting the corresponding co-normal sheaf. Let $\pi^{\rmT}: \rmZ^{\rmT} \ra \rmY^{\rmT}$ denote the induced map.
  Let ${\mathcal F}$ denote a $\rmT$-equivariant coherent sheaf on $\rmX$ and let 
  \[{\mathcal G} = Li^*(i_{1*}({\mathcal F})){\underset {\O_{\rmZ^{\rmT}}} \otimes} (\lambda_{-1}(\cN))^{-1}.\]
  \begin{enumerate}[\rm(i)]
  \item Then one has equality of the classes
  \be \begin{equation}
     \label{equality}
  Rf_*({\mathcal F}) = \Sigma_i(-1)^i R^if_*({\mathcal F}) = i_{\rmY*}R\pi^{\rmT}_*({\mathcal G}) = \Sigma_i(-1)^i i_{\rmY*}R^i\pi^{\rmT}_*({\mathcal G})
  \end{equation} \ee
  in $\pi_0(\holimm {\bf G}(\rmE\rmT^{gm,m}\times_{\rmT} \rmY))_{(0)}$ and in $\limm \pi_0({\bf G}(\rmE\rmT^{gm,m}\times_{\rmT} \rmY))_{(0)}$,
  where the subscript ${(0)}$ denotes the localization at the prime ideal $(0)$ in $\rmR(\rmT)$ and $i_{\rmY}: \rmY^{\rmT} \ra \rmY$ denotes the obvious closed immersion.
  \vskip .1cm
  \item The above hypotheses are satisfied whenever $\rmY$ is regular and $\rmX$ is $\rmT$-equivariantly projective, for example,
  $\rmX$ is a normal quasi-projective scheme.
  \vskip .1cm
  \item If $\rmY$ has trivial action by $\rmT$, equality holds in $\pi_0(\holimm {\bf G}(\rmE\rmT^{gm,m}\times_{\rmT} \rmY)) \cong \limm \pi_0({\bf G}(\rmE\rmT^{gm,m}\times_{\rmT} \rmY)) \cong  {\mathbb Z}[\rmT]\compl_{\rmI_{\rmT}} \otimes \pi_0(\bG(\rmY))$.
  \vskip .1cm
  \item  If $\rmY$ is affine and $\rmX^{\rmT}$ is affine (for example, $\rmX^{\rmT}$ is a finite set of isolated points on $\rmX$), then
  one obtains $\Sigma_i (-1)^i \rmH^i(\rmX, {\mathcal F}) = \rmH^0(\rmX^{\rmT}, {\mathcal G})$. This hypothesis is satisfied if
  $\rmX$ is a projective toric variety for a split torus $\rmT$, or a projective spherical variety for a linear algebraic group $\rmG$ over an algebraically closed
  field, with $\rmT$ denoting the maximal torus in $ \rmG$ and where $\rmY$ denotes the base field.
  \vskip .1cm
  \item If the map $\pi$ is perfect and ${\mathcal F}$ denotes a perfect complex of $\rmT$-equivariant $\O_{\rmX}$-modules,
  then the results corresponding to (i) through (iv) hold for equivariant homotopy K-theory replacing equivariant G-theory.
  \end{enumerate}
  \end{theorem}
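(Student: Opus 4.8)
The plan is to reduce the statement to Thomason's concentration theorem and self-intersection formula for torus actions in the \emph{genuine} equivariant setting, and then to transport the resulting identity along the comparison map from genuine $\rmT$-equivariant $\rmG$-theory to the Borel construction supplied by \cite[Theorem 1.4]{CJP23}. Since $i_1$ is a closed immersion, $i_{1*}$ is exact, so $[Rf_*\mathcal F]=[R\pi_*(i_{1*}\mathcal F)]$ in $\pi_0\bG(\rmY,\rmT)$, and the alternating-sum expressions are the standard identity $[R\pi_*(-)]=\sum_i(-1)^i[R^i\pi_*(-)]$. It therefore suffices to prove the identity $[i_{1*}\mathcal F]=i_*[\mathcal G]$ in the localized genuine group $\pi_0\bG(\rmZ,\rmT)_{(0)}$ and then apply $R\pi_*$.

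Because $\rmZ$ is regular, $\rmZ^{\rmT}$ is regular (Thomason), so $i\colon\rmZ^{\rmT}\hookrightarrow\rmZ$ is a regular closed immersion with locally free conormal sheaf $\cN$ carrying no trivial $\rmT$-weight. The Koszul resolution of $\O_{\rmZ^{\rmT}}$ yields the self-intersection formula $Li^*i_*(-)=(-)\otimes\lambda_{-1}(\cN)$ on $\pi_*\bG(\rmZ^{\rmT},\rmT)$; using the splitting $\pi_*\bG(\rmZ^{\rmT},\rmT)\cong\rmR(\rmT)\otimes\pi_*\bG(\rmZ^{\rmT})$ for the trivial action one sees that $\lambda_{-1}(\cN)$ equals $\prod_j(1-\chi_j)^{r_j}$ up to a nilpotent summand, with all $\chi_j\neq 1$, hence becomes a unit after localizing at $(0)$ (each $1-\chi_j$ is a nonzero element of $\rmR(\rmT)$). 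Thomason's concentration theorem asserts that $i_*$ is an isomorphism on $\pi_*\bG(-,\rmT)_{(0)}$, the third term $\bG_*(\rmZ\setminus\rmZ^{\rmT},\rmT)$ of the localization sequence being annihilated by a nonzero element of $\rmR(\rmT)$ since all stabilizers away from the fixed locus lie in proper subtori. Combining the two, $i_*^{-1}=\lambda_{-1}(\cN)^{-1}\cdot Li^*$, and applying this to $[i_{1*}\mathcal F]$ gives precisely $[i_{1*}\mathcal F]=i_*[\mathcal G]$ with $\mathcal G$ as in the statement (and supported on $\rmX^{\rmT}\subseteq\rmZ^{\rmT}$). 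Since $\pi\circ i=i_{\rmY}\circ\pi^{\rmT}$, one has $R\pi_*\circ i_*=(i_{\rmY})_*\circ R\pi^{\rmT}_*$, whence $[Rf_*\mathcal F]=(i_{\rmY})_*R\pi^{\rmT}_*[\mathcal G]$ in $\pi_0\bG(\rmY,\rmT)_{(0)}$.

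To obtain (i), apply to this genuine identity the comparison transformation $\bG(-,\rmT)\to\holimm\bG(\rmE\rmT^{gm,m}\times_{\rmT}-)$ of \cite[Theorem 1.4]{CJP23} --- which identifies its target with the $\rmI_{\rmT}$-adic derived completion --- and then localize over $\rmR(\rmT)$ at $(0)$; composing with the canonical surjection $\pi_0(\holimm(-))\to\limm\pi_0(-)$ yields the statement in the second model as well. The transformation is compatible with $i_*$, $R\pi_*$, $Li^*$, the $\lambda$-operations and with descent of $\cN$ to the Borel construction, and it carries $\lambda_{-1}(\cN)$ to an element still invertible after $(0)$-localization (again by the splitting for $\rmZ^{\rmT}$), so the genuine identity transports verbatim. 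The main obstacle is exactly this transfer: since $(0)$-localization does not commute with $\rmI_{\rmT}$-adic completion, one cannot simply complete the already-localized genuine statement, but must instead argue with images of genuine classes under the comparison map and check that it intertwines all the operations entering the formula.

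The remaining parts are specializations. For (ii): if $\rmX$ is $\rmT$-equivariantly projective, choose a $\rmT$-linearized closed immersion $\rmX\hookrightarrow\bbP(V)$ and take $\rmZ=\bbP(V)\times\rmY$ (regular since $\rmY$ is), $i_1=(i_{\bbP},f)$, $\pi=\mathrm{pr}_{\rmY}$, all $\rmT$-equivariant; in the normal quasi-projective case the $\rmT$-linearized embedding is provided by \cite[Theorem 2.5]{SumII}. For (iii): specialize (i) to the case of trivial $\rmT$-action on $\rmY$, so that $\rmY^{\rmT}=\rmY$, $i_{\rmY}=\mathrm{id}$, and $\holimm\bG(\rmE\rmT^{gm,m}\times_{\rmT}\rmY)$ splits as $\ZZ[\rmT]\compl_{\rmI_{\rmT}}\otimes\bG(\rmY)$ by the derived completion theorem; since the final class $[Rf_*\mathcal F]$ is integral, the identity descends from the $(0)$-localization to $\ZZ[\rmT]\compl_{\rmI_{\rmT}}\otimes\pi_0\bG(\rmY)$. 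For (iv): if $\rmY$ and $\rmX^{\rmT}$ are affine then $R^i\pi^{\rmT}_*\mathcal G=0$ for $i>0$ and $R^if_*\mathcal F$ is the sheaf attached to $\rmH^i(\rmX,\mathcal F)$, so (i) becomes $\sum_i(-1)^i\rmH^i(\rmX,\mathcal F)=\rmH^0(\rmX^{\rmT},\mathcal G)$; the descent from the completed--localized group to $\mathrm{Frac}(\rmR(\rmT))$ uses its injection $\mathrm{Frac}(\rmR(\rmT))\hookrightarrow(\ZZ[\rmT]\compl_{\rmI_{\rmT}})_{(0)}$ together with the finiteness of $\rmX^{\rmT}$; this applies to projective toric varieties and, over an algebraically closed field, to projective spherical varieties, whose $\rmT$-fixed loci are finite sets of points. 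For (v): when $\pi$ is perfect and $\mathcal F$ is a $\rmT$-equivariant perfect complex, $i_{1*}\mathcal F$ is perfect on the regular scheme $\rmZ$ and $R\pi_*$ preserves perfect complexes, so the same argument runs with $\bKH$ and perfect complexes replacing $\bG$ and coherent sheaves, using the $\bKH$-versions of the concentration and derived completion theorems.
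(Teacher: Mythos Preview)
Your argument is correct, but it takes a genuinely different route from the paper's. You prove the coherent trace identity first in the \emph{genuine} equivariant group $\pi_0\bG(\rmY,\rmT)_{(0)}$, invoking Thomason's classical concentration theorem for $\pi_*\bG(-,\rmT)_{(0)}$, and then push the identity forward along the $\rmR(\rmT)$-linear comparison map to the Borel-style target. The paper instead proves a concentration theorem \emph{directly in the completed theory}: its Theorem~3.1 shows $(\oplus_n\pi_n(\holimm\bG(\rmE\rmT^{gm,m}\times_{\rmT}(\rmX-\rmX^{\rmT}))))_{(0)}=0$, and the ensuing Corollary shows $i_*$ is an isomorphism and $\lambda_{-1}(\cN)$ a unit \emph{in the Borel-style localized theory itself}. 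The commutative diagram (3.0.3) then packages both the genuine and completed versions together. Your approach is more economical---it avoids reproving concentration after completion---but it makes the argument rest entirely on the various compatibilities of the comparison map (with $Rf_*$, $Li^*$, products, and the module structure), which you correctly flag as the main point to verify. The paper's approach, by establishing Theorem~3.1, yields the stronger intrinsic statement that concentration holds in the completed theory, which is of independent interest and makes the identity visibly hold in the Borel model rather than merely on the image of the comparison map.

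Two minor points. For (iii), your phrase ``since the final class $[Rf_*\mathcal F]$ is integral'' is not quite the right justification: one needs that the localization map $\bbZ[\rmT]\compl_{\rmI_{\rmT}}\otimes\pi_0\bG(\rmY)\to(\bbZ[\rmT]\compl_{\rmI_{\rmT}}\otimes\pi_0\bG(\rmY))_{(0)}$ is \emph{injective}, which the paper argues by observing that $\bbZ[\rmT]\compl_{\rmI_{\rmT}}$ is a Laurent formal power series ring, hence torsion-free over $\bbZ[\rmT]$, so the elements being inverted are non-zero-divisors. For (ii), your factorization through $\bbP(V)\times\rmY$ matches the paper's exactly.
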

  \begin{corollary}
  \label{LRR.cor}
  \begin{enumerate}[\rm(i)]
  \item Assume that $\rmX$ and $\rmY$ are $\rmT$-quasi-projective and ${\mathcal F}$ is as in Theorem ~\ref{LRR}(i). Composing with the Riemann-Roch transformations into a suitable equivariant Borel-Moore homology theory (see Definition 
  ~\ref{gen.equiv.coh.hom})  localized at the prime ideal
  $(0)$ in $\limm \Pi_{\rm r,s} \rmH^{r,s}(\rmB\rmT^{gm,m}, \Gamma)_{\Q}$, one also obtains 
  \[f_*(\tau_{\rmZ}({\mathcal F})) = \tau_{\rmY}(R\pi_*({\mathcal F})) = i_{\rmY*}\pi^{\rmT}_*\tau_{\rmZ^{\rmT}}({\mathcal G}) =\pi_*\tau_{\rmZ}(i_*{\mathcal G}).\]
  as classes in $(\limm \Pi_{\rm u,v}\rmH_{u, v}(\rmE\rmT^{gm,m}\times_{\rmT} \rmY, \Gamma'))_{(0)}$. (Here $\tau_{\rmZ}$ ($\tau_{\rmY}$, $\tau_{\rmZ^{\rmT}}$) denote the
   Riemann-Roch transformation associated to $\rmZ$( $\rmY$, $\rmZ^{\rmT}$ \res) and a generalized Borel-Moore homology theory.)
 \item
 If the map $\pi$ is perfect and ${\mathcal F}$ denotes a perfect complex of $\rmT$-equivariant $\O_{\rmX}$-modules,
  then the results corresponding to (i)  holds for equivariant homotopy K-theory replacing equivariant G-theory.
 \item  (Todd class formula)
  Under the assumptions of (i),
  \[ f_*(\tau_{\rmX}) = \pi_*\tau_{\rmZ}(i_*(Li^*(i_{1*}({\O_{\rmX}})){\underset {\O_{\rmZ^{\rmT}}} \otimes} (\lambda_{-1}(\cN))^{-1})).\]
   \end{enumerate}
\end{corollary}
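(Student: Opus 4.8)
The plan is to deduce the corollary from Theorem~\ref{LRR} by applying the equivariant Borel--Moore Riemann--Roch transformations $\tau$ of Theorem~\ref{composite.RR.} (and, for part (ii), their homotopy $\rmK$-theory analogues from~\eqref{RR.2.1}) to the class identities already established there, and then unwinding the outcome using the two formal properties of $\tau$ that are built into its construction: covariant functoriality for proper $\rmT$-equivariant maps, and linearity over a Chern character ring map. To fix notation, write $A := \limm \Pi_{r,s}\rmH^{r,s}(\rmB\rmT^{gm,m},\Gamma)_{\Q}$. For every $\rmT$-scheme $\rmW$ of finite type over $\k$ the limit $\limm \Pi_{u,v}\rmH_{u,v}(\rmE\rmT^{gm,m}\times_{\rmT}\rmW,\Gamma')$ is a module over $A$ (pull back along $\rmE\rmT^{gm,m}\times_{\rmT}\rmW\to\rmB\rmT^{gm,m}$ and cap), and each $\tau_{\rmW}$ is a morphism of modules over a Chern character ring map $\mathrm{ch}\colon \rmR(\rmT)\otimes\Q \to A$ intertwining the $\rmR(\rmT)$-action on equivariant $\rmG$-theory (resp. homotopy $\rmK$-theory) with the $A$-action on equivariant Borel--Moore homology.

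For part (i) I would start from the identity $Rf_*({\mathcal F}) = i_{\rmY*}R\pi^{\rmT}_*({\mathcal G})$ of Theorem~\ref{LRR}(i), valid in $\pi_0(\holimm {\bf G}(\rmE\rmT^{gm,m}\times_{\rmT}\rmY))_{(0)}$ and in $\limm\pi_0({\bf G}(\rmE\rmT^{gm,m}\times_{\rmT}\rmY))_{(0)}$, the subscript meaning localization at $(0)\subset\rmR(\rmT)$. Once one knows $\mathrm{ch}$ is injective, it carries $\rmR(\rmT)\setminus(0)$ into $A\setminus(0)$, so the $\mathrm{ch}$-linear transformation $\tau_{\rmY}$ extends to the localizations and sends this identity to $\tau_{\rmY}(Rf_*{\mathcal F}) = \tau_{\rmY}(i_{\rmY*}R\pi^{\rmT}_*{\mathcal G})$ inside $(\limm \Pi_{u,v}\rmH_{u,v}(\rmE\rmT^{gm,m}\times_{\rmT}\rmY,\Gamma'))_{(0)}$. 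Using $f=\pi\circ i_1$, $\pi\circ i = i_{\rmY}\circ\pi^{\rmT}$, and covariant functoriality of $\tau$ for the proper maps $i_1, i, \pi, \pi^{\rmT}, i_{\rmY}$, I would then rewrite the two sides as
\[ \tau_{\rmY}(Rf_*{\mathcal F}) = \tau_{\rmY}(R\pi_*(i_{1*}{\mathcal F})) = \pi_*\tau_{\rmZ}(i_{1*}{\mathcal F}) = \pi_*\big(i_{1*}\tau_{\rmX}({\mathcal F})\big) = f_*\tau_{\rmX}({\mathcal F}) \]
and
\[ \tau_{\rmY}(i_{\rmY*}R\pi^{\rmT}_*{\mathcal G}) = i_{\rmY*}\tau_{\rmY^{\rmT}}(R\pi^{\rmT}_*{\mathcal G}) = i_{\rmY*}\pi^{\rmT}_*\tau_{\rmZ^{\rmT}}({\mathcal G}) = \pi_*\,i_*\tau_{\rmZ^{\rmT}}({\mathcal G}) = \pi_*\tau_{\rmZ}(i_*{\mathcal G}), \]
which, read with $\tau_{\rmZ}({\mathcal F})$ abbreviating $\tau_{\rmZ}(i_{1*}{\mathcal F}) = i_{1*}\tau_{\rmX}({\mathcal F})$ and $R\pi_*({\mathcal F})$ abbreviating $Rf_*{\mathcal F}$, is exactly the four-term equality asserted in (i).

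Part (ii) is the same argument verbatim with ${\bf G}$ replaced by ${\bKH}$, invoking Theorem~\ref{LRR}(v) in place of Theorem~\ref{LRR}(i) and the transformations of~\eqref{RR.2.1}; the hypotheses that $\pi$ is perfect and ${\mathcal F}$ is a perfect complex of $\rmT$-equivariant $\O_{\rmX}$-modules ensure that $i_{1*}$, $Li^*$, tensoring with $(\lambda_{-1}(\cN))^{-1}$, and $R\pi_*$ all stay within perfect complexes, so the classes and functoriality used above remain available. Part (iii) is the special case ${\mathcal F} = \O_{\rmX}$ of (i): then ${\mathcal G} = Li^*(i_{1*}(\O_{\rmX})){\underset {\O_{\rmZ^{\rmT}}} \otimes} (\lambda_{-1}(\cN))^{-1}$, and, writing $\tau_{\rmX}$ for $\tau_{\rmX}(\O_{\rmX})$, the equality of the first and last terms of (i) reads $f_*(\tau_{\rmX}) = \pi_*\tau_{\rmZ}(i_*{\mathcal G})$, which is the stated Todd class formula.

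The main obstacle I foresee is the only genuinely non-formal input: the injectivity of $\mathrm{ch}\colon \rmR(\rmT)\otimes\Q\to A$ (together with the verification, from the construction of $\tau$ and the stability hypotheses of Definition~\ref{gen.equiv.coh.hom}, that this $\mathrm{ch}$ is indeed the ring map making $\tau$ a module morphism). For injectivity I would split the split torus $\rmT\cong\GG_m^{\,n}$ and use the product decomposition of both $\rmR(\rmT)\otimes\Q$ and $A$ to reduce to $\rmT=\GG_m$; there $A$ is the $m\to\infty$ limit of $\rmH^{*,*}(\PP^m,\Gamma)_{\Q}$, a power series ring $\Q[[\xi]]$ on the first Chern class $\xi$ of the tautological line bundle, and $\mathrm{ch}$ sends the standard character $t$ to $\exp(\xi)$; if $\sum_k a_k\exp(k\xi)=0$ in $\Q[[\xi]]$ then equating the coefficients of $\xi^j$ gives $\sum_k k^j a_k=0$ for all $j\ge 0$, and nonvanishing of the relevant Vandermonde determinant forces all $a_k=0$. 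Everything else is bookkeeping with the functoriality and the module structure of $\tau$ once this point is in place.
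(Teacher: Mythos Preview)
Your proposal is correct and follows essentially the same approach as the paper's own proof, which simply says that (i) and (ii) follow by applying the Riemann-Roch transformations to the identities of Theorem~\ref{LRR}, and that (iii) is the special case ${\mathcal F}=\O_{\rmX}$ of (i). Your write-up is in fact more careful than the paper on the one genuinely non-formal point: you isolate and verify the injectivity of the Chern character $\mathrm{ch}\colon \rmR(\rmT)\otimes\Q\to A$ needed to pass from the $\rmR(\rmT)$-localization at $(0)$ in Theorem~\ref{LRR} to the $A$-localization at $(0)$ in the corollary, whereas the paper leaves this implicit.
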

\begin{corollary} 
\label{Dem.toric}
  (Demazure character formula): Let $\rmG/\rmB$ denote the flag variety associated to a connected reductive group $\rmG$ and a Borel subgroup $\rmB$ all defined over an algebraically closed field $k$.
  Let  $\rmX_{\it w}$ denote a Schubert variety in $\rmG/\rmB$, associated to $ {\it w } \in {\rm W}$, which denotes the corresponding Weyl group. Let $\rmT$ denote 
  the maximal torus in $\rmG$. 
  Let ${\mathcal L}_{\lambda}$ denote the $\rmT$-equivariant line bundle on $\rmG/\rmB$ associated to a character $\lambda $ of $\rmT$.
  Then in $\rmR(\rmT) = \pi_0(\bG(\Speck, \rmT)$, one obtains:
  \[\Sigma_i(-1)^i\rmH^i(\rmX_{\it w}, {\mathcal L}_{\lambda}) = e^{\rho}L_{{\it s}_1} \cdots L_{\it s_r}e^{\lambda- \rho}\]
  where ${\it w} = s_1\cdots s_r$, ${\rm L}_{{\it s}_{\alpha}}(u) = \frac{u-s_{\alpha}u}{1-e^{\alpha}}$, and $\rho$ is half the sum of the positive roots.
\end{corollary}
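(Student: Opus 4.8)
The plan is to realise the left-hand side as a Lefschetz fixed-point sum by applying Theorem~\ref{LRR}(iv) with the regular ambient scheme $\rmZ = \rmG/\rmB$, and then to evaluate that sum. I would take $\rmZ = \rmG/\rmB$, a smooth projective $\rmT$-variety ($\rmT\subset\rmB$), $i_1\colon \rmX_{\it w}\hookrightarrow\rmG/\rmB$ the $\rmT$-equivariant closed immersion of the Schubert variety, $\rmY = \Speck$, and $\mathcal F = {\mathcal L}_{\lambda}|_{\rmX_{\it w}}$, which is a $\rmT$-equivariant line bundle, in particular coherent. Here $\rmG/\rmB$ is regular and $\pi\colon\rmG/\rmB\to\Speck$ is projective, so $f=\pi\circ i_1\colon\rmX_{\it w}\to\Speck$ satisfies the standing hypotheses of Theorem~\ref{LRR}. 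Since $\rmY=\Speck$ is affine and $\rmX_{\it w}^{\rmT} = (\rmG/\rmB)^{\rmT}\cap\rmX_{\it w} = \{\,v\rmB : v\le{\it w}\,\}$ is a finite set of reduced points, hence affine, Theorem~\ref{LRR}(iv) applies and gives
\[\Sigma_i(-1)^i\rmH^i(\rmX_{\it w},{\mathcal L}_{\lambda}) = \rmH^0(\rmX_{\it w}^{\rmT},{\mathcal G}),\qquad {\mathcal G} = Li^*(i_{1*}{\mathcal L}_{\lambda})\,\underset{\O_{(\rmG/\rmB)^{\rmT}}}{\otimes}\,(\lambda_{-1}(\cN))^{-1},\]
where $i\colon(\rmG/\rmB)^{\rmT}\hookrightarrow\rmG/\rmB$ and $\cN$ is the conormal sheaf; a priori this is an identity in $\rmR(\rmT)_{(0)}$, and one uses that the left-hand Euler characteristic is an integral class in $\rmR(\rmT)$.

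Next I would evaluate the local contributions. The $\rmT$-weights on the tangent space $T_{v\rmB}(\rmG/\rmB)$ are $\{\,-v\alpha : \alpha\in\Phi^+\,\}$ (at the base point $e\rmB$ these weights are the negative roots, and one then translates by $v$), so $\cN$ at $v\rmB$ has weights $\{\,v\alpha : \alpha\in\Phi^+\,\}$ and $\lambda_{-1}(\cN)|_{v\rmB} = \prod_{\alpha\in\Phi^+}(1-e^{v\alpha})$; with the standard normalisation of ${\mathcal L}_{\lambda}$, its fibre at $v\rmB$ is the weight $v\lambda$. Writing $c_{v,{\it w}} = [Li^*(i_{1*}\O_{\rmX_{\it w}})]_{v\rmB}\in\rmR(\rmT)$, which vanishes unless $v\le{\it w}$, the previous identity becomes
\[\Sigma_i(-1)^i\rmH^i(\rmX_{\it w},{\mathcal L}_{\lambda}) = \sum_{v\le{\it w}}\frac{e^{v\lambda}\,c_{v,{\it w}}}{\prod_{\alpha\in\Phi^+}(1-e^{v\alpha})}\qquad\text{in }\rmR(\rmT)_{(0)},\]
a sum that in fact lies in $\rmR(\rmT)$.

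It remains to match this fixed-point expression with $e^{\rho}L_{s_1}\cdots L_{s_r}e^{\lambda-\rho}$: this is the classical identity underlying Demazure's character formula, and I would establish it through the Bott--Samelson resolution $\psi\colon\rmZ_{\underline{w}}\to\rmX_{\it w}$ attached to the reduced word ${\it w}=s_1\cdots s_r$. From the vanishing $R\psi_*\O_{\rmZ_{\underline{w}}} = \O_{\rmX_{\it w}}$ (Schubert varieties have rational singularities in all characteristics, by Ramanathan and Mehta--Ramanathan) and the projection formula, $\Sigma_i(-1)^i\rmH^i(\rmX_{\it w},{\mathcal L}_{\lambda})$ equals the $\rmT$-equivariant Euler characteristic of $\psi^*{\mathcal L}_{\lambda}$ on $\rmZ_{\underline{w}}$; writing $\rmZ_{\underline{w}}$ as an iterated ${\mathbb P}^1$-bundle $\rmZ_{\underline{w}}=\rmZ_r\to\cdots\to\rmZ_1\to\rmZ_0=\Speck$ and applying, at each stage, the equivariant Riemann--Roch computation for line bundles on a ${\mathbb P}^1$-bundle identifies the successive proper pushforwards with Demazure operators and yields $D_{s_1}\cdots D_{s_r}(e^{\lambda})$ for the appropriate normalisation of $D_s$ on $\rmR(\rmT)$. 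The conjugation identity $e^{\rho}D_s(e^{-\rho}u) = L_s(u)$, which follows from $s(\rho)=\rho-\alpha$ for the simple root $\alpha$ of $s$, then rewrites this as $e^{\rho}L_{s_1}\cdots L_{s_r}e^{-\rho}(e^{\lambda}) = e^{\rho}L_{s_1}\cdots L_{s_r}e^{\lambda-\rho}$, the asserted formula. (One may alternatively verify directly that the fixed-point sum of the previous step equals this iterated operator, using the localisation description of $[\O_{\rmX_{\it w}}]$ in $K_0^{\rmT}(\rmG/\rmB)$; the case ${\it w}=e$, where both sides reduce to $e^{\lambda}$ since $\rmX_e$ is the point $e\rmB$, fixes the normalisations.)

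The main obstacle is the bookkeeping of normalisations: one must align the sign convention for the roots occurring in $\lambda_{-1}(\cN)$ with the one used in $L_s$, fix the equivariant normalisation of ${\mathcal L}_{\lambda}$ so that the fibre weight at $v\rmB$ is $v\lambda$ rather than $-v\lambda$, and keep track of the $\rho$-shift, and then check that the a priori element of $\rmR(\rmT)_{(0)}$ delivered by the fixed-point formula is integral --- which it is, since the Euler characteristic on the left is. The only inputs that are not formal, both classical but requiring the correct references in arbitrary characteristic $p$, are the cohomology vanishing $R^{>0}\psi_*\O_{\rmZ_{\underline{w}}} = 0$ for the Bott--Samelson resolution and the ${\mathbb P}^1$-bundle Riemann--Roch computation that produces the Demazure operator.
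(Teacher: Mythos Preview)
Your proposal is correct and follows essentially the same route as the paper: apply Theorem~\ref{LRR}(iv) with $\rmX=\rmX_{\it w}$, $\rmZ=\rmG/\rmB$, $\rmY=\Speck$, note that $\rmX_{\it w}^{\rmT}=\{v\le {\it w}\}$ is finite, and obtain the fixed-point sum $\Sigma_{v\le{\it w}}a(v,{\it w})e^{v\lambda}$. The only difference is in the last step: where you sketch the identification with $e^{\rho}L_{s_1}\cdots L_{s_r}e^{\lambda-\rho}$ via the Bott--Samelson resolution and iterated ${\mathbb P}^1$-pushforwards, the paper simply invokes \cite[Lemma~6.13]{Th86-1} for this algebraic identity, remarking that the advantage over Thomason's original argument is that here the equality is obtained directly in $\rmR(\rmT)$ rather than after inverting the Bott element.
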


\vskip .2cm
As another application of the derived completion theorems in \cite{CJ23}) (see \cite[theorems 1.2 and 5.6]{CJ23}), we obtain the following result that shows how to compute the homotopy groups
of the equivariant G-theory in terms of equivariant motivic cohomology.   Assume that $\rmX$ is a $ \rmG$-quasi-projective scheme, as in ~\ref{G.quasiproj}, with $\rmG= \rmT$ a split torus. Then the approximation $\rmE\tilde \rmG^{gm,m}{\underset {\tilde \rmG} \times}\rmX$ (for any integer $m>0$), is always a 
scheme, and we obtain the following theorem.
\begin{theorem}
 \label{compute.der.compl}
  Assume that $\rmX$ is a $ \rmT$-quasi-projective scheme as in Definition ~\ref{G.quasiproj}, with $ \rmT$ a split torus. Then, for a fixed value of $-s-t$, and for $m$ sufficiently large, there exists a strongly convergent spectral sequence with:
  \[E_2^{s,t} = \H_{\rm BM,M}^{s-t}(\rmE \rmT^{gm,m}{\underset { \rmT} \times}\rmX, {\mathbb Z}(-t)) \Ra \pi_{-s-t}(\bG(\rmE \rmT^{gm,m}{\underset { \rmT} \times}\rmX)) \cong \pi_{-s-t}(\bG(\rmX,  \rmT)\compl_{\rho_{ \rmT, m}}), \]
 where $\H_{\rm BM,M}$ denotes Borel-Moore motivic cohomology, which identifies with the higher equivariant Chow groups. 
 \vskip .1cm
 In addition, rationally, the homotopy groups 
$\pi_{-s-t}(\bG(\rmX,  \rmT)\compl_{\rho_{\rmT, m}})$ identify with the rational $ \rmT$-equivariant higher Chow  groups. 
 \end{theorem}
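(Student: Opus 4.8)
The plan is to derive the spectral sequence from the motivic (niveau) spectral sequence for Quillen's $G$-theory of an ordinary finite-type $k$-scheme, applied to the Borel approximation $Z_m := \rmE\rmT^{gm,m}{\underset{\rmT}\times}\rmX$, and then to transport the statement to the equivariant side using the derived completion theorem of \cite{CJ23}. First I would record --- as is already noted in the discussion preceding the statement --- that $Z_m$ is a quasi-projective scheme of finite type over $k$; this uses that $\rmX$ is $\rmT$-quasi-projective and that $\rmT$ acts freely on $\rmE\rmT^{gm,m}$ (and, since a split torus is special, the quotient is a scheme with $Z_m \to \rmB\rmT^{gm,m}$ Zariski-locally a product with $\rmX$, so $Z_m$ is normal but in general singular). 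In particular $Z_m$ is Noetherian of finite Krull dimension, say $d_m$.

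Next I would invoke the motivic spectral sequence for $G$-theory: for every finite-type $k$-scheme $Z$ there is a spectral sequence
\[
E_2^{s,t} = \H_{\rm BM,M}^{s-t}(Z, {\mathbb Z}(-t)) \Ra \pi_{-s-t}(\bG(Z)),
\]
coming from the filtration of $\bG(Z)$ by dimension of support (the niveau tower), whose $E_2$-term is Borel--Moore motivic cohomology, identified by Bloch's theorem --- together with its extension to possibly singular $Z$ (Levine; Friedlander--Suslin) --- with Bloch's higher Chow groups of $Z$. Applying this with $Z = Z_m$ produces the asserted spectral sequence, with abutment $\pi_{-s-t}(\bG(Z_m))$. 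Since $\rmX$, and hence $Z_m$, need not be smooth, it is essential here to use the $G$-theory (Borel--Moore) form and not the version for smooth schemes.

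Strong convergence follows from finite-dimensionality: Bloch's cycle complex computing $\mathrm{CH}_w(Z_m,\bullet)$ is nonzero only for $0 \le w \le d_m$ and in non-negative simplicial degree, so $E_2^{s,t}$ vanishes outside a region of the $(s,t)$-plane that meets each anti-diagonal $s+t=-n$ in only finitely many lattice points; a spectral sequence bounded in each total degree is strongly convergent. The identification of the $E_2$-term with the higher \emph{equivariant} Chow groups of $\rmX$ is then, up to reindexing, the Edidin--Graham/Totaro definition of the latter: for $m$ large enough that the codimension in $\rmE\rmT^{gm,m}$ of the removed locus exceeds every (of the finitely many) weight $w$ occurring in total degree $n$, homotopy invariance and localization for Borel--Moore motivic cohomology give $\H_{\rm BM,M}^{s-t}(Z_m, {\mathbb Z}(-t)) \cong \H_{\rm BM,M,\rmT}^{s-t}(\rmX, {\mathbb Z}(-t))$ for every $(s,t)$ with $s+t=-n$; this is the role of the hypothesis ``for $m$ sufficiently large''. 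The isomorphism on the right of the abutment is precisely \cite[Theorems 1.2 and 5.6]{CJ23}, giving $\pi_{-s-t}(\bG(Z_m)) \cong \pi_{-s-t}(\bG(\rmX,\rmT)\compl_{\rho_{\rmT,m}})$.

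For the rational refinement I would use that the motivic spectral sequence is compatible with Adams (equivalently $\gamma$-) operations, which act on $E_2^{s,t}\otimes{\mathbb Q}$ purely of weight $-t$; a differential $d_r$ changes $t$, hence the weight, so $d_r\otimes{\mathbb Q}=0$ and the spectral sequence degenerates at $E_2\otimes{\mathbb Q}$. As distinct weights split the (finite, in each total degree) filtration, $\pi_{-s-t}(\bG(Z_m))_{\mathbb Q}$ is the direct sum over $(s,t)$ with $s+t=-n$ of the groups $\H_{\rm BM,M}^{s-t}(Z_m,{\mathbb Q}(-t))$; combined with the stabilization of the previous paragraph and with \cite{CJ23} this exhibits $\pi_{-s-t}(\bG(\rmX,\rmT)\compl_{\rho_{\rmT,m}})_{\mathbb Q}$ as the direct sum of the rational higher $\rmT$-equivariant Chow groups of $\rmX$ in total degree $-s-t$ (one also notes that $\rmR(\rmT)\otimes{\mathbb Q}$ is a Laurent polynomial ring, so the derived completion along $\rho_{\rmT,m}$ agrees with ordinary flat adic completion and introduces nothing new rationally). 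I expect the main obstacle to lie in the second paragraph: establishing the $G$-theory motivic spectral sequence, with $E_2$-page Bloch's higher Chow groups, for the possibly singular scheme $Z_m$ --- the classical Bloch--Lichtenbaum--Friedlander--Suslin--Levine construction being stated for smooth schemes --- and confirming that the $\rmT$-Borel constructions remain within the class of schemes to which that input applies; a more routine point is propagating the ``$m$ large'' condition uniformly through the strong-convergence bound so that one value of $m$ handles an entire total degree.
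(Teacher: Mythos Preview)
Your proposal is correct and follows the same overall strategy as the paper: apply the motivic Atiyah--Hirzebruch spectral sequence for $\bG$-theory to the finite Borel approximation $Z_m$, deduce strong convergence from the vanishing range of higher Chow groups, identify the $E_2$-page with equivariant Chow groups via Edidin--Graham stabilization for $m$ large, and transport the abutment to the derived completion via \cite{CJ23}. Your treatment of the rational statement (degeneration via Adams operations) is more explicit than the paper's, which simply asserts it.

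The one genuine difference is how the spectral sequence is produced in the singular case. You quote the Friedlander--Suslin/Levine motivic spectral sequence for $\bG$-theory of an arbitrary finite-type $k$-scheme as a black box; the paper instead works with the slice tower $s_{\le q}\bK$ of the $K$-theory spectrum and, for singular $\rmX$, uses the $\rmT$-quasi-projective hypothesis to choose a $\rmT$-equivariant closed immersion $\rmX \hookrightarrow \tilde\rmX$ into a smooth scheme with open complement $\tilde\rmU$, then takes the homotopy fiber of restriction from $\rmE\rmT^{gm,m}\times_{\rmT}\tilde\rmX$ to $\rmE\rmT^{gm,m}\times_{\rmT}\tilde\rmU$ to obtain the exact couple. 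Your route is shorter once the cited input is granted; the paper's route is more self-contained and shows exactly where the $\rmT$-quasi-projectivity hypothesis is used beyond guaranteeing that $Z_m$ is a scheme. One terminological caution: what you call the ``niveau tower'' (filtration by dimension of support) is the Brown--Gersten--Quillen spectral sequence, whose $E_2$-page is \emph{not} known to be higher Chow groups for singular $Z$; the Friedlander--Suslin and Levine constructions you actually need are built differently (weight filtration, resp.\ homotopy coniveau), so you should cite those directly rather than describe the filtration as ``dimension of support''. You correctly flag this as the main obstacle, and the paper's embedding-into-smooth maneuver is precisely one way to resolve it.
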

\vskip .1cm
\begin{remark} For more general groups other than a split torus, there is still
  a similar spectral sequence, but only for the full $\rmE \rmG^{gm}{\underset { \rmG} \times}\rmX$. The corresponding spectral sequence converges only conditionally, and computes the homotopy groups of the full derived completion.
\end{remark}
\vskip .2cm
We conclude this discussion by providing the following corollaries to clarify the range of applications of the above Theorems.
\vskip .1cm
The results of Sumihiro (see \cite[Theorem 1]{SumI} and \cite[Theorem 2.5]{SumII}) and the following Corollary  enable us to see that the Theorems \cite[Theorem 1.4]{CJP23}, Theorems ~\ref{composite.RR.}, ~\ref{LRR} and ~\ref{compute.der.compl} all apply
to toric varieties and spherical varieties, irrespective of whether they are regular or projective. Recall a {\it normal} variety $\rmX$ provided
 with the action of a split torus $\rmT$ (of a connected split reductive group $ \rmG$) is called {\it a toric variety} ({\it an $ \rmG$-spherical variety}) if $\rmT$ has only finitely many orbits 
  on $\rmX$ ($ \rmG$ and a Borel subgroup of $ \rmG$ both have only finitely many orbits on $\rmX$, \res).
\begin{corollary} (Toric varieties and Spherical varieties)
 \label{toric.sph}
 The Theorems \cite[Theorem 1.4]{CJP23}, ~\ref{composite.RR.}, ~\ref{LRR}, ~\ref{compute.der.compl} and Corollary ~\ref{LRR.cor} apply to all quasi-projective split toric varieties, with the group $ \rmG$ denoting the given torus. Theorems \cite[Theorem 1.4]{CJP23} and ~\ref{composite.RR.} 
 apply to all quasi-projective $ \rmG$-spherical varieties, where $ \rmG$ is the given connected reductive group. Theorems ~\ref{LRR} and ~\ref{compute.der.compl} apply to 
 all quasi-projective $ \rmG$-spherical varieties for the action of a split sub-torus of the given group $ \rmG$.
\end{corollary}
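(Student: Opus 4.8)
The plan is to reduce Corollary~\ref{toric.sph} to the verification that toric and $\rmG$-spherical varieties satisfy Standing Hypothesis~\ref{stand.hyp.1} together with the additional hypotheses attached to each cited result; the only substantial input will be Sumihiro's equivariant quasi-projectivity theorem. Observe first that the defining ``finitely many orbits'' property plays no role here: what is used is only that, by definition, a toric variety (resp.\ a $\rmG$-spherical variety) is a \emph{normal} variety carrying an action of a split torus $\rmT$ (resp.\ a connected split reductive group $\rmG$), so the normality required in \ref{stand.hyp.1}(iv) and in all the quoted theorems is automatic. The ambient group $\tilde\rmG$ is equally easy: a split torus $\rmT\cong\GG_m^{r}$ is a closed subgroup-scheme of the $r$-fold product $\GL_1^{r}$ (or of $\GL_r$ as its diagonal torus), while a connected reductive group $\rmG$, being linear algebraic, admits a faithful representation and hence a closed immersion $\rmG\hookrightarrow\GL_n$ for a suitable $n$. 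Taking $\tilde\rmG$ to be such a $\GL_n$, or a finite product of these, and recalling that $\pi_1(\GL_n)$ is torsion-free, we obtain a $\tilde\rmG$ satisfying \ref{stand.hyp.1}(ii)--(iii), i.e.\ exactly the class of ambient groups permitted in Theorem~\ref{composite.RR.}.

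Next I would invoke Sumihiro's theorem \cite[Theorem 1]{SumI}, \cite[Theorem 2.5]{SumII}: a normal quasi-projective variety with an action of a \emph{connected} linear algebraic group $\rmH$ admits an $\rmH$-equivariant locally closed immersion into some $\PP(V)$ with $V$ a finite-dimensional $\rmH$-representation, hence is $\rmH$-quasi-projective in the sense of Definition~\ref{G.quasiproj}. Applying this with $\rmH=\rmT$ for a quasi-projective toric variety and with $\rmH=\rmG$ for a quasi-projective $\rmG$-spherical variety (both groups being connected), we see that \ref{stand.hyp.1}(iv) holds in either case. Combined with the previous paragraph this makes \cite[Theorem 1.4]{CJP23} and Theorem~\ref{composite.RR.} directly applicable to every proper $\rmT$- (resp.\ $\rmG$-) equivariant morphism between quasi-projective toric (resp.\ $\rmG$-spherical) varieties, with the perfectness clause \ref{composite.RR.}(ii) available whenever that morphism is a local complete intersection morphism; feeding these transformations into the Riemann-Roch maps of section~2 then produces Corollary~\ref{LRR.cor} in the toric case.

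Finally I would treat Theorems~\ref{LRR} and \ref{compute.der.compl}, which require the acting group to be a split torus. For a toric variety this is the given torus $\rmT$; for a $\rmG$-spherical variety $\rmX$ I would restrict the action to a split sub-torus $\rmT\subseteq\rmG$ (e.g.\ a maximal torus, which is split since $\rmG$ is split reductive), normality and quasi-projectivity being unaffected, so that $\rmX$ is $\rmT$-quasi-projective by the previous step. This already yields Theorem~\ref{compute.der.compl} for all quasi-projective split toric varieties and for all quasi-projective $\rmG$-spherical varieties viewed with a split sub-torus action. For Theorem~\ref{LRR} one must in addition supply, for the given proper map $f$, a factorization $f=\pi\circ i_1$ with $i_1$ a $\rmT$-equivariant closed immersion into a regular $\rmT$-scheme and $\pi$ proper; by part (ii) of that theorem this is automatic once $\rmY$ is regular and $\rmX$ is $\rmT$-equivariantly projective --- e.g.\ for $\rmX$ a projective toric or spherical variety, via its $\rmT$-equivariant closed immersion into the regular scheme $\PP(V)$ supplied by Sumihiro's theorem --- and the general quasi-projective case I would reduce to this by first replacing $\rmX$ by a $\rmT$-equivariant projective completion (for a toric variety, the normalization of its closure in $\PP(V)$, further subjected to a toric resolution of singularities, available in all characteristics, if one wants the ambient scheme regular).

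I expect the one genuinely delicate point to be this last reduction: making sure that the factorization hypothesis of Theorem~\ref{LRR} --- a $\rmT$-equivariant \emph{closed} immersion into a \emph{regular} $\rmT$-scheme that maps \emph{properly} to $\rmY$ --- can be arranged for the non-projective quasi-projective toric and spherical varieties, and not merely for the projective ones. Everything else reduces to a direct appeal to Sumihiro's theorem and to the hypotheses already recorded in Standing Hypothesis~\ref{stand.hyp.1}.
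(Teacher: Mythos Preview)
Your proposal is correct and follows the same route as the paper: the entire content of the corollary is that toric and spherical varieties are normal by definition, and Sumihiro's theorem \cite[Theorem 1]{SumI}, \cite[Theorem 2.5]{SumII} then makes them $\rmG$-quasi-projective (Definition~\ref{G.quasiproj}), so Standing Hypothesis~\ref{stand.hyp.1}(iv) is satisfied; the paper's own proof in \S\ref{Toric.Spherical.vars} says exactly this and nothing more.

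Your final paragraph, however, goes beyond what the paper does and is not needed. The assertion that Theorem~\ref{LRR} ``applies'' to these varieties should be read only as saying that such $\rmX$ and $\rmY$ lie in the admissible class of schemes; the factorization hypothesis on $f$ is part of the statement of Theorem~\ref{LRR} itself and is left to be checked in each instance (with part (ii) of that theorem giving the standard sufficient condition). The paper makes no attempt to manufacture the factorization in general via equivariant completion or toric resolution, and you need not either --- indeed, for spherical varieties over fields of positive characteristic such a reduction would be genuinely problematic, since equivariant resolution is not known there. So you may simply drop that discussion: once $\rmG$-quasi-projectivity is established, the proof is complete.
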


\vskip .2cm

\vskip .2cm
Here is an {\it outline of the paper}.    Section 2 discusses  various forms of 
equivariant Riemann-Roch theorems culminating in those taking values in any equivariant Borel-Moore motivic cohomology theory. Section 3 then discusses
the Lefschetz-Riemann-Roch theorem involving the action of a torus and also discuss various applications stemming from this. 
Section 4 discusses how to compute the completed equivariant G-theory in terms of
equivariant motivic cohomology. 

\section{Higher Equivariant Riemann-Roch theorems}
We consider several variants of equivariant Riemann-Roch theorems in this section. 
\subsection{Riemann-Roch from forms of equivariant K-theory to the corresponding form of K-theory on the Borel construction}
The first theorem considers the Riemann-Roch 
transformation between equivariant $\bG$-theory and Borel-style equivariant $\bG$-theory and the second theorem considers 
the Riemann-Roch 
transformation between equivariant Homotopy K-theory and Borel-style equivariant homotopy K-theory.
\begin{theorem}(Riemann-Roch for Equivariant G-theory)
\label{RR.Gth}
 Let $f:\rmX \ra \rmY$ denote
a proper $\rmG$-equivariant map  between two normal quasi-projective $\rmG$-schemes of finite type over $\rmS= \Speck$. Let $\tilde \rmG$ denote
either a ${\rm GL}_n$ or a finite product of groups of the form ${\rm GL}_n$ containing $\rmG$ as a closed subgroup-scheme. Then 
the squares
\be \begin{equation}
\label{RR1.1}
\xymatrix{ {{\bf G}(\rmX, \rmG)} \ar@<1ex>[r] \ar@<1ex>[d]^{Rf_*} & {{\bf G}(\rmX, \rmG)\compl_{\rho_{{\tilde \rmG}, \alpha(m)}}} \ar@<1ex>[r] \ar@<1ex>[d]^{Rf_*} & {{\bf G}({\rm E}{ \tilde \rmG}^{gm,m}\times \rmX, \rmG)} \ar@<1ex>[d]^{Rf_*} \ar@<1ex>[r]^{\simeq} & {{\bf G}({\rm E}{\tilde \rmG}^{gm,m}{\underset {\rmG} \times}\rmX)} \ar@<1ex>[d]^{Rf_*}\\
              {{\bf G}(\rmY, \rmG)} \ar@<1ex>[r]  & {{\bf G}(\rmY, \rmG)\compl_{\rho_{{ \rmG}, \alpha(m)}}} \ar@<1ex>[r]  &  {{\bf G}({\rm E}{\tilde \rmG}^{gm,m}\times \rmY, \rmG)} \ar@<1ex>[r]^{\simeq} & {{\bf G}({\rm E}{\tilde \rmG}^{gm,m}{\underset {\rmG} \times}\rmY)}}
\end{equation} \ee
homotopy commute, for any fixed integer $m \ge 0$, with $\alpha(m)$ as in \cite[Definition 5.5]{CJP23}. Moreover, as $m$ varies 
one obtains an inverse system of such homotopy commutative diagrams. On taking the homotopy inverse limits we obtain the  commutative diagram:
\be \begin{equation}
\label{RR1.2}
\xymatrix{ {\pi_*{\bf G}(\rmX, \rmG)} \ar@<1ex>[r] \ar@<1ex>[d]^{Rf_*} & {\pi_*(\holimm {\bf G}(\rmX, \rmG)\compl_{\rho_{{\tilde \rmG}, \alpha(m)}})} \ar@<1ex>[r]^{\cong} \ar@<1ex>[d]^{Rf_*} & {\pi_*(\holimm {\bf G}({\rm E}{ \tilde \rmG}^{gm,m}\times \rmX, \rmG))} \ar@<1ex>[d]^{Rf_*} \ar@<1ex>[r]^{\cong} & {\pi_*(\holimm {\bf G}({\rm E}{\tilde \rmG}^{gm,m}{\underset {\rmG} \times}\rmX))} \ar@<1ex>[d]^{Rf_*}\\
              {\pi_*{\bf G}(\rmY, \rmG)} \ar@<1ex>[r]  & {\pi_*(\holimm {\bf G}(\rmY, \rmG)\compl_{\rho_{{ \rmG}, \alpha(m)}})} \ar@<1ex>[r]^{\cong}  &  {\pi_*(\holimm {\bf G}({\rm E}{\tilde \rmG}^{gm,m}\times \rmY, \rmG))} \ar@<1ex>[r]^{\cong} & {\pi_*(\holimm {\bf G}({\rm E}{\tilde \rmG}^{gm,m}{\underset {\rmG} \times}\rmY)}).}
\end{equation} \ee
\end{theorem}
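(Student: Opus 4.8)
The plan is to build the two displayed diagrams by combining three separate naturality statements, each applied to the proper equivariant pushforward $Rf_*$, and then passing to the homotopy inverse limit over $m$. First I would recall that for a proper $\rmG$-equivariant morphism $f\colon \rmX\to\rmY$ of Noetherian $\rmG$-schemes the derived pushforward $Rf_*\colon {\bf G}(\rmX,\rmG)\to{\bf G}(\rmY,\rmG)$ is defined on equivariant $\bG$-theory, is functorial in $f$, commutes with flat base change, and is a map of module spectra over the equivariant $K$-theory spectrum of $\Speck$, hence over $\rmR(\tilde \rmG)$ via the restriction $\rmR(\tilde \rmG)\to\rmR(\rmG)$; normality and quasi-projectivity of $\rmX,\rmY$ keep us inside the class of schemes for which the completion theorems of \cite{CJ23,CJP23} apply after passing to Borel approximations. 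Granting this, the leftmost square of (\ref{RR1.1}) commutes up to homotopy simply because derived completion at the ideal $\rho_{{\tilde \rmG},\alpha(m)}$ is an endofunctor of $\rmR(\tilde \rmG)$-module spectra and the horizontal arrows are the completion units: naturality of the unit applied to the $\rmR(\tilde \rmG)$-linear map $Rf_*$ gives the square.

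For the middle square I would invoke the derived completion theorem \cite[Theorem 1.4]{CJP23} (equivalently \cite[Theorem 5.6]{CJ23}), which, for each fixed $m$ and the associated index $\alpha(m)$ of \cite[Definition 5.5]{CJP23}, identifies ${\bf G}(\rmX,\rmG)\compl_{\rho_{{\tilde \rmG},\alpha(m)}}$ with ${\bf G}({\rm E}{\tilde \rmG}^{gm,m}\times\rmX,\rmG)$ via the map induced by the flat projection $p_{\rmX}\colon {\rm E}{\tilde \rmG}^{gm,m}\times\rmX\to\rmX$. Since ${\rm E}{\tilde \rmG}^{gm,m}$ is smooth over $k$ the projections $p_{\rmX},p_{\rmY}$ are flat, the square with vertices ${\rm E}{\tilde \rmG}^{gm,m}\times\rmX$, $\rmX$, ${\rm E}{\tilde \rmG}^{gm,m}\times\rmY$, $\rmY$, horizontal arrows the projections and vertical arrows $f$ and ${\rm id}\times f$, is Cartesian, and ${\rm id}\times f$ is again proper; flat base change for equivariant coherent pushforward then gives $p_{\rmY}^*\circ Rf_*\simeq R({\rm id}\times f)_*\circ p_{\rmX}^*$, which is exactly the homotopy commutativity of the middle square. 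The rightmost square is the naturality of the identification ${\bf G}({\rm E}{\tilde \rmG}^{gm,m}\times\rmX,\rmG)\simeq{\bf G}({\rm E}{\tilde \rmG}^{gm,m}{\underset {\rmG} \times}\rmX)$: as recalled from \cite[section 3.1]{CJP23}, ${\rm E}{\tilde \rmG}^{gm,m}$ is an open $\tilde \rmG$-subscheme of a $\tilde \rmG$-representation on which $\rmG$ acts freely, so $\rmG$ acts freely on ${\rm E}{\tilde \rmG}^{gm,m}\times\rmX$ with scheme quotient and fppf quotient map, faithfully flat descent identifies the relevant categories of $\rmG$-equivariant coherent sheaves with those on the quotient, and this is compatible with $Rf_*$ since the quotient of $f$ is proper and descent commutes with $R(-)_*$ (which may be checked after flat base change to the torsor).

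Finally I would assemble over $m$: each of the three squares commutes up to homotopy, and all the constructions are natural along the maps ${\rm E}{\tilde \rmG}^{gm,m}\hookrightarrow{\rm E}{\tilde \rmG}^{gm,m+1}$, with $\alpha(m)$ chosen in \cite[Definition 5.5]{CJP23} precisely so that the completions ${\bf G}(-,\rmG)\compl_{\rho_{{\tilde \rmG},\alpha(m)}}$ form a compatible inverse system; hence one obtains an inverse system of homotopy-commutative diagrams, and applying $\holimm$ and then $\pi_*$ yields (\ref{RR1.2}), the middle and right horizontal arrows becoming isomorphisms on homotopy groups by the completion theorem. The step I expect to require the most care is the middle square: one must know that the equivalence supplied by the completion theorem is literally the one induced by the flat pullback $p_{\rmX}^*$, so that flat base change is applicable, rather than merely some abstract equivalence; the bookkeeping of $\lim^1$-terms when passing to $\holimm$ is a further nuisance but does not affect commutativity of (\ref{RR1.2}) on homotopy groups, since levelwise homotopy commutativity compatible in $m$ already suffices for that.
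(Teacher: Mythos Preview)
Your proposal is correct and follows essentially the same three-square decomposition as the paper's own proof: functoriality of derived completion for the left square, flat base change along the projection $p_{\rmX}$ for the middle square (the paper records this as \cite[Proposition 4.5(ii)]{CJ23}, noting that the composite of the first two horizontal maps is exactly $p_{\rmX}^*$), and descent along the free $\rmG$-quotient for the right square (which the paper simply declares ``clear''); compatibility in $m$ is handled in the paper by citing \cite[Proposition 5.7]{CJP23}, which packages the naturality you spell out. Your closing worry about whether the middle map is literally the one induced by $p_{\rmX}^*$ is already answered by how that map is defined---it is the factorization of $p_{\rmX}^*$ through the completion unit---so flat base change applies on the nose.
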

\begin{theorem}(Riemann-Roch for Equivariant Homotopy K-theory)
\label{RR.KH}
Let $f:\rmX \ra \rmY$ denote
a proper $\rmG$-equivariant map between two normal quasi-projective $\rmG$-schemes of finite type over $\rmS= \Speck$, that is also {\it perfect} (see \cite[Definition 2.1]{CJP23}). Let $\tilde \rmG$ denote
either a ${\rm GL}_n$ or a finite product of groups of the form ${\rm GL}_n$ containing $\rmG$ as a closed subgroup-scheme. Then 
the squares
\be \begin{equation}
\label{RR2.1}
\xymatrix{ {{\bKH}(\rmX, \rmG)} \ar@<1ex>[r] \ar@<1ex>[d]^{Rf_*} & {{\bKH}(\rmX, \rmG)\compl_{\rho_{{\tilde \rmG}, \alpha(m)}}} \ar@<1ex>[r] \ar@<1ex>[d]^{Rf_*} & {{\bKH}({\rm E}{\tilde \rmG}^{gm,m}\times \rmX, \rmG)} \ar@<1ex>[d]^{Rf_*} \ar@<1ex>[r]^{\simeq} & {{\bKH}({\rm E}{\tilde \rmG}^{gm,m}{\underset {\rmG} \times}\rmX)} \ar@<1ex>[d]^{Rf_*}\\
              {{\bKH}(\rmY, \rmG)} \ar@<1ex>[r]  & {{\bKH}(\rmY, \rmG)\compl_{\rho_{{\tilde \rmG}, \alpha(m)}}} \ar@<1ex>[r]  &  {{\bKH}({\rm E}{\tilde \rmG}^{gm,m}\times \rmY, \rmG)} \ar@<1ex>[r]^{\simeq} & {{\bKH}({\rm E}{\tilde \rmG}^{gm,m}{\underset {\rmG} \times}\rmY)}}
\end{equation} \ee
homotopy commute, for any fixed integer $m \ge 0$, with $\alpha(m)$ as in \cite[Definition 5.5]{CJP23}.  Moreover, as $m$ varies 
one obtains an inverse system of such homotopy commutative diagrams. On taking the homotopy inverse limits we obtain the commutative diagram:
\fontsize{9}{12}
\be \begin{equation}
\label{RR2.2}
\xymatrix{ {\pi_*{\bKH}(\rmX, \rmG)} \ar@<1ex>[r] \ar@<1ex>[d]^{Rf_*} & {\pi_*(\holimm {\bKH}(\rmX, \rmG)\compl_{\rho_{{\tilde \rmG}, \alpha(m)}})} \ar@<1ex>[r]^{\cong} \ar@<1ex>[d]^{Rf_*} & {\pi_*(\holimm {\bKH}({\rm E}{ \tilde \rmG}^{gm,m}\times \rmX, \rmG))} \ar@<1ex>[d]^{Rf_*} \ar@<1ex>[r]^{\cong} & {\pi_*(\holimm {\bKH}({\rm E}{\tilde \rmG}^{gm,m}{\underset {\rmG} \times}\rmX))} \ar@<1ex>[d]^{Rf_*}\\
              {\pi_*{\bKH}(\rmY, \rmG)} \ar@<1ex>[r]  & {\pi_*(\holimm {\bKH}(\rmY, \rmG)\compl_{\rho_{{ \rmG}, \alpha(m)}})} \ar@<1ex>[r]^{\cong} &  {\pi_*(\holimm {\bKH}({\rm E}{\tilde \rmG}^{gm,m}\times \rmY, \rmG))} \ar@<1ex>[r]^{\cong} & {\pi_*(\holimm {\bKH}({\rm E}{\tilde \rmG}^{gm,m}{\underset {\rmG} \times}\rmY))} .}
\end{equation} \ee  
\normalsize
\end{theorem}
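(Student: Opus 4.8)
The plan is to run the proof of Theorem~\ref{RR.Gth} essentially verbatim, the only genuinely new input being that, because $f$ is assumed \emph{perfect}, the derived pushforward $Rf_*$ is defined on homotopy $K$-theory and is natural for the constructions of \cite{CJP23}. First I would recall from \cite[Definition~2.1]{CJP23} that a perfect morphism is pseudo-coherent and of finite Tor-dimension, so that $Rf_*$ carries equivariant perfect complexes to equivariant perfect complexes; hence $Rf_*$ induces a map of spectra $\bKH(\rmX,\rmG)\to\bKH(\rmY,\rmG)$, compatible with the non-connective delooping and the $\mathbb A^1$-localisation that define $\bKH$. Since the induced morphism $\rmE\tilde\rmG^{gm,m}\times\rmX\to\rmE\tilde\rmG^{gm,m}\times\rmY$ (namely $\id\times f$) is again proper and perfect, and so is the induced morphism $\rmE\tilde\rmG^{gm,m}\times_{\rmG}\rmX\to\rmE\tilde\rmG^{gm,m}\times_{\rmG}\rmY$ of geometric quotients (these quotients being schemes under the running hypotheses, as in \cite[section 3]{CJP23}), all four vertical arrows of \eqref{RR2.1} are defined.

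Next I would establish the homotopy commutativity of the three squares of \eqref{RR2.1} one at a time. For the left square, the horizontal map is the unit of the derived completion at the ideal $\rho_{\tilde\rmG,\alpha(m)}\subset\rmR(\tilde\rmG)$; by the projection formula $Rf_*$ is a morphism of $\rmR(\tilde\rmG)$-module spectra, and so commutes with this completion, which supplies the homotopy. For the middle square, the comparison map $\bKH(\rmX,\rmG)\compl_{\rho_{\tilde\rmG,\alpha(m)}}\to\bKH(\rmE\tilde\rmG^{gm,m}\times\rmX,\rmG)$ of \cite[Definition~5.5]{CJP23} is induced by pullback along the $\rmG$-equivariant smooth projection $\rmE\tilde\rmG^{gm,m}\times\rmX\to\rmX$ (it factors through the completion because the Borel approximation is already complete at that level), and the homotopy comes from smooth base change for the Cartesian square relating this projection to $f$ and $\id\times f$. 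For the right square, the equivalence $\bKH(\rmE\tilde\rmG^{gm,m}\times\rmX,\rmG)\simeq\bKH(\rmE\tilde\rmG^{gm,m}\times_{\rmG}\rmX)$ is faithfully flat ($\rmG$-torsor) descent along the quotient map $q$; since $Rf_*$ commutes with flat base change along $q$, it descends to the pushforward on the quotients, which is precisely the commutativity of this square.

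Then I would observe that each of these maps is compatible with the transition morphisms $\rmE\tilde\rmG^{gm,m}\to\rmE\tilde\rmG^{gm,m+1}$ of the pro-system, so that \eqref{RR2.1} upgrades to an inverse system of homotopy-commutative diagrams; and since every square arises from a natural transformation of functors or from a commutative square of schemes, there is enough coherence to pass to the homotopy limit. Applying $\holimm$ and then $\pi_*$ yields the strictly commutative diagram \eqref{RR2.2}: the first isomorphism labelled $\cong$ there is the derived completion theorem \cite[Theorem~1.4]{CJP23}, identifying $\holimm$ of the $\rho_{\tilde\rmG,\alpha(m)}$-completions with $\holimm$ of the Borel approximations, and the second is the level-wise torsor-descent equivalence above, which $\holimm$ preserves.

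I expect the main obstacle to lie in the middle and right squares: one has to verify that the comparison and descent equivalences of \cite{CJP23}, originally set up for a single scheme, are genuinely natural for $Rf_*$ --- that is, that $Rf_*$ commutes with the smooth base change defining the Borel approximations and with faithfully flat descent along $\rmE\tilde\rmG^{gm,m}\times\rmX\to\rmE\tilde\rmG^{gm,m}\times_{\rmG}\rmX$. Once these naturality statements are in place, forming the inverse system and passing to $\holimm$ are formal, exactly as in the proof of Theorem~\ref{RR.Gth}.
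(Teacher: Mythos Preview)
Your proposal is correct and follows the same strategy as the paper: verify each square of \eqref{RR2.1} separately (left square via functoriality of derived completion for $\rmR(\tilde\rmG)$-module spectra, middle via naturality of the comparison map under smooth base change, right via descent along the $\rmG$-torsor), then pass to the homotopy inverse limit. The paper's proof is terser only in that it cites the specific results of the companion papers that package your naturality checks --- \cite[3.0.7]{CJ23} and \cite[(4.0.3)]{CJP23} for the left square, \cite[Proposition~5.4(ii)]{CJP23} for the middle square, and \cite[Proposition~5.7]{CJP23} for the compatibility of the inverse system as $m$ varies --- so the ``main obstacle'' you flag is precisely what those propositions already establish.
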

\vskip .1cm \noindent
{\bf Proofs of the above two theorems.} 
The commutativity of the left-most square in both ~\eqref{RR1.1} and ~\eqref{RR2.1} follows from 
the definition and functoriality of derived completion for equivariant G-theory and equivariant homotopy K-theory as in \cite[3.0.7]{CJ23}: see also \cite[(4.0.3)]{CJP23}. The composition of the left two horizontal maps in the top row of 
~\eqref{RR1.1} (\eqref{RR2.1}) is given by sending a complex of $\rmG$-equivariant
coherent sheaves on $\rmX$ ($\rmG$-equivariant perfect complex on $\rmX \times \Delta[n]$) to its pull-back on ${\rm E}{\tilde \rmG}^{gm,m}{ \times}\rmX$ 
(${\rm E}{\tilde \rmG}^{gm,m}{ \times}\rmX \times \Delta [n]$, \res). The composition of the left two horizontal maps in the bottom row of ~\eqref{RR1.1} (\eqref{RR2.1})
has a similar description. 
\vskip .1cm
Therefore, the homotopy commutativity of the of the  middle  square in ~\eqref{RR1.1} (\eqref{RR2.1}) follows from 
\cite[Proposition 4.5(ii)]{CJ23} (\cite[Proposition 5.4(ii)]{CJP23}, \res).
The commutativity of the last squares in both ~\eqref{RR1.1} and ~\eqref{RR2.1} is clear. In order to show that the homotopy commutative diagrams ~\eqref{RR1.1} and ~\eqref{RR2.1} in both 
theorems are compatible as $m$ varies, resulting in the commutative diagrams in ~\eqref{RR1.2} and ~\eqref{RR2.2}, we invoke \cite[Proposition 5.7]{CJP23}. 
\vskip .1cm 
\qed

\subsection{Equivariant cohomology and equivariant Borel-Moore homology theories}
\vskip .2cm

We proceed to combine the above Riemann-Roch theorems with any of the usual Riemann-Roch theorems into various cohomology/homology theories.
For this we will adopt the framework as in \cite{Gi}. We start with the base scheme $\rmS= \Speck$, the spectrum of
a field $k$, and fix either the big Zariski site, the restricted big Zariski site (where the objects are the same as in the big Zariski site, but where
morphisms are required to be flat maps), the full subcategory of the big Zariski site consisting of all smooth schemes of finite type over $\rmS$, 
or the corresponding sites associated to the big \'etale site of $\rmS$.
\begin{definition}
 \label{gen.coh}
Let $\Gamma = \oplus_r \Gamma(r)$ denote a graded complex of abelian presheaves on one of the above sites. Then the cohomology groups are defined as the corresponding hypercohomology
 with respect to the above complexes, that is, $\rmH^{i, j}(\rmX, \Gamma(*)) = \H^i(\rmX, \Gamma(j))$. (The index $i$ ($j$) will be called the degree (the weight, \res).) We may also additionally require that
 such bi-graded cohomology theories are contravariantly functorial for all maps between smooth schemes.
\end{definition} 
 \vskip .2cm
 In order to be able to define a suitable Riemann-Roch transformation with values in an equivariant Borel-Moore homology theory, we will
need to restrict to a nicer class of schemes as considered in the following definition.
\begin{definition}
\label{G.quasiproj}
 For a given linear algebraic group $\rmG$, a convenient class of schemes we consider
 are what are called {\it $\rmG$-quasi-projective schemes}: a quasi-projective scheme $\rmX$ with a given $\rmG$-action is
 $\rmG$-quasi-projective, if it admits a $\rmG$-equivariant locally closed immersion into a projective space,  ${\rm Proj(V)}$,
 for a representation ${\rm V}$ of $\rmG$. Then a well-known theorem of Sumihiro, (see \cite[Theorem 2.5]{SumII}) shows that any normal quasi-projective scheme over $\rmS$ provided
 with the action of a connected linear algebraic group $\rmG$ is $\rmG$-quasi-projective. This may be extended to the 
 case where $\rmG$ is not necessarily connected by invoking the above result for the action of $\rmG^o$ and then
 considering the closed $\rmG$-equivariant closed immersion of the given $\rmG$-scheme into 
 $\Pi_{g \eps \rmG/\rmG^o} {\rm Proj(V)} $ followed by a $\rmG$-equivariant closed immersion of the latter into
 ${\rm Proj}(\otimes_{g \eps \rmG/\rmG^o}{\rm V})$. (See for example. \cite[pp. 629-630]{Th88}.)
\end{definition}

 \begin{definition} (Generalized Borel-Moore homology theories)
  \label{gen.hom}
  The following assumptions are essentially those that appear in \cite{Gi}.
The homology theories we consider will be defined by graded complexes $\Gamma' = \oplus _r \Gamma'(r)$, that come equipped with pairings: $\Gamma'(r) \otimes \Gamma(s) \ra \Gamma'(r-s)$, with the homology groups
defined by $\rmH_{s,t}(\rmX, \Gamma') = \H^{-s}(\rmX, \Gamma'(t))$. We will assume such homology groups are {\it covariantly functorial}
 for proper maps, with $f_*:\rmH_{s, t}(\rmX, \Gamma') \ra \rmH_{s, t}(\rmY, \Gamma')$ denoting the induced map associated 
 to such a proper map $\rmX \ra \rmY$. We will let $\rmH_{*, \bullet}(\rm X, \Gamma')$ denote  $ \oplus_{s, t}\rmH_{s, t}(\rmX, \Gamma')$. Moreover, $\rmH_{*, \bullet}(\rmX, \Gamma')_{{\mathbb Q}}$ will
 denote the corresponding graded homology group, that is made into a graded ${\mathbb Q}$-vector space in a suitable manner, depending on the choice of the 
 complexes as in the examples considered below.
 We will also impose the following additional requirements:
 \begin{enumerate}[\rm(i)]
  \item If $f: \rmX \ra \rmY$ is a flat map of pure relative dimension $d$, then there is an induced map $f^*: \rmH_{s,t}(\rmY, \Gamma') \ra \rmH_{s+2d, t+d}(\rmX, \Gamma')$.
  \item For a closed immersion $i: \rmY \ra \rmX$, there exists a long-exact localization sequence:
  \[\cdots \ra \rmH_{i,j}(\rmY, \Gamma') {\overset {i_*} \ra} \rmH_{i,j}(\rmX, \Gamma') {\overset {j^*} \ra} \rmH_{i, j}(\rmX - \rmY, \Gamma') \ra \rmH_{i-1, j}(\rmY, \Gamma') \ra \cdots .\]
 \item The given pairing $\Gamma'(r) \otimes \Gamma(s) \ra \Gamma'(r-s)$ induces a (cap-product) pairing for every closed subscheme $\rmY$ of $\rmX$:
 \[\rmH_{i,j}(\rmX, \Gamma') \otimes \rmH_{\rmY}^{r, s}(\rmX, \Gamma) \ra \rmH_{i-r, j-s}(\rmY, \Gamma').\]
 \item For a smooth scheme $\rmX$ of pure dimension $d$, there exists a fundamental class $[\rmX] \eps H_{2d,d}(\rmX, \Gamma')$ so that the cap-product
 pairing with this class induces an isomorphism: $\rmH_{\rmY}^{r, s}(\rmX, \Gamma) \ra \rmH_{2d-r, d-s}(\rmY, \Gamma')$, for $\rmY$ closed and of pure codimension in $\rmX$.
\item For a regular closed immersion $i: \rmY \ra \rmX$ of smooth schemes of pure codimension $c$, one obtains Thom-isomorphism, that is, an isomorphism: 
 $\rmH^{i,j}(\rmY, \Gamma) {\overset {\cong} \ra} \rmH^{i+2c, j+c}_{\rmY}(\rmX, \Gamma)$.
  \item (Riemann-Roch transformation) Given such a cohomology/homology theory, we also require that one has a Riemann-Roch transformation
 $\tau: \pi_*({\bf G}(\rmX)) \ra \rmH_{*, \bullet}(\rmX, \Gamma')_{{\mathbb Q}}$ defined for all the schemes considered and which is covariantly functorial 
 in $\rmX$ for proper maps.
 \item Gysin maps for regular closed immersions: if $i: \rmY \ra \rmX$ is a regular closed immersion of pure codimension $c$, we assume there is a Gysin map:
 $i^*: \rmH_{s, t}(\rmX, \Gamma') \ra \rmH_{s-2c, t-c}(\rmY, \Gamma')$.
 \end{enumerate}
 \end{definition}
 Let $\rmX$ denote a $\rmG$-quasi-projective scheme, together with a $\rmG$-equivariant closed immersion into a smooth $\rmG$-scheme
 $\tilde \rmX$. Then we define a Gysin map $i^*: \rmH_{*, \bullet}(\rmE \tilde \rmG^{gm,m+1}\times _{\rmG} \rmX, \Gamma') \ra \rmH_{*-2c, \bullet-c}(\rmE \tilde \rmG^{gm,m}\times _{\rmG} \rmX, \Gamma')$
 by making use of the vertical maps in the following diagram which are isomorphisms:
 \be \begin{equation}
 \label{Gysin.eq.homology}
 \xymatrix{{\rmH_{*, \bullet}(\rmE \tilde\rmG^{gm,m+1}\times _{\rmG} \rmX, \Gamma')} \ar@<1ex>[r]  & {\rmH_{*-2c, \bullet-c}(\rmE \tilde \rmG^{gm,m}\times _{\rmG} \rmX, \Gamma')} \\
             {\rmH^{2\tilde d-*, \tilde d-\bullet}_{\rmE \tilde \rmG^{gm,m+1}\times _{\rmG} \rmX}(\rmE \tilde \rmG^{gm,m+1}\times _{\rmG} \tilde \rmX, \Gamma)} \ar@<1ex>[u]^{\cong} \ar@<1ex>[r]^{i^*} & {\rmH^{2\tilde d-*, \tilde d-\bullet}_{\rmE \tilde \rmG^{gm,m}\times _{\rmG} \rmX}(\rmE \tilde \rmG^{gm,m}\times _{\rmG} \tilde \rmX, \Gamma)}\ar@<1ex>[u]^{\cong} }
 \end{equation} \ee
where the map in the bottom row is the obvious restriction, $\tilde d = dim(\rmE \tilde \rmG^{gm,m+1} \times_{\rmG} \tilde \rmX)$
and $c=  dim(\rmE \tilde \rmG^{gm,m+1} \times_{\rmG} \tilde \rmX) - dim(\rmE \tilde \rmG^{gm,m} \times_{\rmG} \tilde \rmX)$.
In addition to the above hypotheses, we will require the following additional ones to handle the equivariant framework.
\vskip .2cm
{\it Stability for equivariant cohomology and homology}.
\label{stab.1}
\begin{enumerate}[\rm(i)]
 \item Let $\tilde \rmG$ denote
either a ${\rm GL}_n$ or a finite product of groups of the form ${\rm GL}_n$ containing the given linear algebraic group $\rmG$ as a closed subgroup-scheme.
 We will often make the {\it following additional assumption}: given a scheme $\rmX$ of finite type over $\k$ and provided with an 
 action by the linear algebraic group $\rmG$, and with $\rmE \tilde \rmG^{gm, m}$ as in \cite[section 3.1]{CJP23}, and for each pair of  fixed
 integers $\rmi, \rmj$, the inverse system $\{\rmH^{i,j}(\rmE \tilde \rmG^{gm,m}\times _{\rmG}\rmX, \Gamma) |m\}$ stabilizes as $m \ra \infty$.
 \item Assume the dimension of $\rmE \tilde \rmG^{gm, m}$ is ${\tilde {\rm d}}_m$ and the dimension of $\rmG$ is ${\rm g}$. Then
 we also require that for each fixed $\rmi, \rmj$, the inverse system $\{\rmH_{ i+2\tilde d_m-2g, j+\tilde d_m -g}(\rmE \tilde \rmG^{gm,m}\times _{\rmG}\rmX, \Gamma')|m\}$
 (where the structure maps of the inverse system are defined by the Gysin maps in ~\eqref{Gysin.eq.homology}), stabilizes as $m \ra \infty$.
 \end{enumerate}
 \begin{definition} (Generalized equivariant cohomology/homology theories)
  \label{gen.equiv.coh.hom}
  \begin{enumerate}[\rm(i)]
 \item We will let $\rmH_{\rmG}^{ i,j}(\rmX, \Gamma)$) denote the limiting value of the inverse system $\{\rmH^{i,j}(\rmE \tilde \rmG^{gm,m}\times _{\rmG}\rmX, \Gamma) |m\}$ stabilizes as $m \ra \infty$.
 \item We will let $\rmH^{\rmG}_{i,j}(\rmX, \Gamma)$) denote the corresponding limiting value of the inverse system \newline \noindent 
 $\{\rmH_{ i+2\tilde d_m-2g, j+\tilde d_m -g}(\rmE \tilde \rmG^{gm,m}\times _{\rmG}\rmX, \Gamma')|m\}$ as $m \ra \infty$.
  \item An additional assumption we impose is that the equivariant cohomology $\rmH_{\rmG}^{i, j}(\rmX, \Gamma)$
 and equivariant homology $\rmH^{\rmG}_{s, t}(\rmX, \Gamma')$ for each fixed $i, j, s, t$ are independent of the choice of the ind-scheme 
 $\{\rmE\tilde \rmG^{gm,m}\times_{\rmG}\rmX|m\}$ defining the Borel construction.
 \end{enumerate}
 \end{definition}
 
 \begin{definition}
  \label{equiv.fund.class} (Equivariant fundamental class) Assume $\rmX$ is a $\rmG$-quasi-projective scheme of pure dimension $d$.
  In view of the stability of the inverse system 
  $\{\rmH_{2d+2\tilde d_m-2g, d+\tilde d_m-g}(\rmE \tilde \rmG^{gm,m}\times _{\rmG} \rmX, \Gamma')|m\}$, 
  we define $\rmH^{\rmG}_{2d,d}(\rmX, \Gamma') = \rmH_{2d+2\tilde d_m-2g, d+\tilde d_m -g}(\rmE \tilde \rmG^{gm,m}\times _{\rmG} \rmX, \Gamma')$, for $m$ sufficiently large,
  and define the corresponding {\it equivariant fundamental class} $[ \rmX]_{\rmG} \in \rmH^{\rmG}_{2d,d}(\rmX, \Gamma')$
  as the image of the classes 
  \[[ \rmX]_{\rmG, m} \in \rmH_{2d+2\tilde d_m-2g, d+\tilde d_m-g}(\rmE \tilde \rmG^{gm,m}\times _{\rmG} \rmX, \Gamma')\]
  for $m$ sufficiently large.
\end{definition}

\begin{examples} Next we list some well-known examples of cohomology/homology theories satisfying the hypotheses in Definitions ~\ref{gen.coh} and ~\ref{gen.hom}.
 \begin{enumerate}[\rm(i)]
  \item One takes the big Zariski or etale site of all smooth schemes of finite type over $\rmS$ and let $\Gamma (r)$ denote the motivic complex of weight $r$.
  \item One may take the category of all quasi-projective schemes of finite type over $\rmS$ provided with the restricted Zariski topology. Here ones
  may take $\Gamma'(r) $ to be Bloch's higher cycle complex of weight $r$: ${\rm z}^r(\quad , \bullet)$, re-indexed using the dimension of cycles rather
  than the codimension, that is, if $\rmX$ has pure dimension $d$, $({\rm z}_{d-r}(\rmX, \bullet))_n$ will denote the cycles of dimension $d-r+n$ in $\rmX \times \Delta[n]$ that intersect the 
  faces of $\rmX \times \Delta [n]$ properly. The corresponding Zariski hypercohomology groups
   identify with Bloch's higher Chow groups, re-indexed by dimension. These are covariantly functorial for proper maps and have localization sequences. Moreover restricting to 
   smooth schemes, these identify with the motivic cohomology groups, indexed by the codimension of cycles. 
   \item Deligne cohomology and homology: these are defined for schemes of finite type over the field of complex or real numbers by complexes satisfying most
   of the above properties. (See \cite{Jan}.)
   \item \'Etale cohomology with $\Gamma(r) = \mu_{\ell}(r)$, $\ell $ invertible in the base field $k$. The corresponding (Borel-Moore) homology theory is defined
   by $\Gamma '(r)$, the corresponding dualizing complex. One may also consider $\ell$-adic \'etale cohomology and homology.
   \item Singular cohomology for varieties over $k= {\mathbb C}$ or ${\mathbb R}$, with $\Gamma (r) = {\mathbb Z}^{\otimes r}$, where ${\mathbb Z}$ denotes
    the constant sheaf given by the integers. The corresponding homology theory is the familiar Borel-Moore homology theory.
 \end{enumerate}
 \end{examples}
 We will skip the verification that most of the above cohomology/homology theories satisfy the properties listed in Definitions ~\ref{gen.coh} and ~\ref{gen.hom}. However, it is
 important for us to show that the additional hypotheses in Definition ~\ref{gen.equiv.coh.hom} are indeed satisfied by several of the 
 cohomology/homology theories listed above and extended to the equivariant framework. One may extend any cohomology/homology theory satisfying the 
 hypotheses in Definitions ~\ref{gen.coh} and ~\ref{gen.hom} to the equivariant framework, that is, to schemes $\rmY$ that come equipped with the action of a linear algebraic group $\rmG$, by 
 taking the hypercohomology with respect to the complexes $\Gamma(r)$ and $\Gamma'(s)$
 on the Borel construction $\rmE \rmG^{gm,m}\times _{\rmG}\rmY$.
 \begin{lemma}
 \label{coh.hom.egs}
 The following cohomology/homology theories extend to the equivariant framework defining cohomology/homology theories satisfying the hypotheses in Definition ~\ref{gen.equiv.coh.hom}.
  \begin{enumerate}[\rm(i)]
   \item Bloch's higher Chow groups 
   \item $\ell$-adic \'etale cohomology and (Borel-Moore) homology, where $\ell$ denotes a prime that is invertible in the base field $\k$ and $\k$ has finite $\ell$ cohomological dimension.
   \item Singular cohomology and (Borel-Moore) homology, when the base field is the complex numbers.
  \end{enumerate}
\end{lemma}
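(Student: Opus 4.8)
The plan is to reduce both stability assumptions (``Stability for equivariant cohomology and homology'', stated just before Definition~\ref{gen.equiv.coh.hom}) and the independence assertion Definition~\ref{gen.equiv.coh.hom}(iii) to two geometric inputs about the approximations $\rmE\tilde{\rmG}^{gm,m}$---$\bbA^1$-homotopy invariance and a purity/localization vanishing estimate---and then to verify those inputs for the three theories. Concretely, for $\tilde{\rmG}=\GL_n$, hence for any finite product of such groups, one may take $\rmE\tilde{\rmG}^{gm,m}$ to be the open subscheme $U_m\subset V_m=\mathrm{Hom}(k^n,k^{n+m})$ on which $\tilde{\rmG}$ acts freely, so that $\codim_{V_m}(V_m\setminus U_m)\to\infty$ as $m\to\infty$; writing $V_{m+1}=V_m\oplus V'$, the inclusion $U_m\times V'\hookrightarrow U_{m+1}$ is open with closed complement of codimension $c_m$, $c_m\to\infty$, and $U_m\times V'\to U_m$ is a trivial vector bundle. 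Forming Borel constructions $\rmE\tilde{\rmG}^{gm,m}\times_{\rmG}\rmX$---genuine quasi-projective schemes because $\rmX$ is $\rmG$-quasi-projective (Standing Hypothesis~\ref{stand.hyp.1}(iv), Definition~\ref{G.quasiproj})---and likewise over a smooth $\rmG$-ambient $\tilde{\rmX}$, the transition datum from level $m+1$ to level $m$ factors as a vector-bundle projection followed by an open immersion whose closed complement has codimension $\ge c_m$.

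For the cohomology stability, argue on this factorization in two steps: homotopy invariance of $\Gamma(*)$ makes $f^{*}$ for the vector-bundle projection an isomorphism on every $\rmH^{i,j}$, while the localization sequence of Definition~\ref{gen.hom}(ii) together with the Thom/purity isomorphism of Definition~\ref{gen.hom}(v) shows that restriction along an open immersion whose closed complement has codimension $\ge c$ is an isomorphism on $\rmH^{i,j}$ once $c$ exceeds a bound depending only on $(i,j)$. Hence for fixed $(i,j)$ the inverse system $\{\rmH^{i,j}(\rmE\tilde{\rmG}^{gm,m}\times_{\rmG}\rmX,\Gamma)\}_m$ is pro-constant, which gives the cohomology stability and defines $\rmH^{i,j}_{\rmG}(\rmX,\Gamma)$.

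For the homology stability the same factorization is used, now while tracking twists. For a rank-$r$ vector bundle the flat pullback of Definition~\ref{gen.hom}(i) is an isomorphism $\rmH_{s,t}\to\rmH_{s+2r,t+r}$ (again a form of homotopy invariance), and the localization sequence of Definition~\ref{gen.hom}(ii) shows that deleting a closed subset is an isomorphism on $\rmH_{s,t}$ once $s$ is large relative to twice the dimension of that subset. Because the re-indexing in Definition~\ref{gen.equiv.coh.hom}(ii) and the Gysin structure maps of~\eqref{Gysin.eq.homology} shift $(s,t)$ by exactly $(2\tilde{d}_m-2g,\ \tilde{d}_m-g)$---the total rank and dimension accumulated from level $m$ onward---the two effects cancel; using the purity identifications in~\eqref{Gysin.eq.homology}, which turn those Gysin maps into restriction maps in cohomology with supports, the re-indexed system $\{\rmH_{i+2\tilde{d}_m-2g,\,j+\tilde{d}_m-g}(\rmE\tilde{\rmG}^{gm,m}\times_{\rmG}\rmX,\Gamma')\}_m$ is again pro-constant for fixed $(i,j)$. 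This gives the homology stability, defines $\rmH^{\rmG}_{s,t}(\rmX,\Gamma')$, and makes the equivariant fundamental class of Definition~\ref{equiv.fund.class} well defined. The independence assertion Definition~\ref{gen.equiv.coh.hom}(iii) then follows from the classical double-fibration argument (Bogomolov, Totaro, Morel--Voevodsky): two approximating systems $\{U_m\}$ and $\{U'_{m}\}$ are each compared to $\{U_m\times U'_{m'}\}$ using that the projections $U_m\times U'_{m'}\to U_m$ are Zariski-locally trivial affine-space bundles---so isomorphisms on cohomology and, after the evident twist, on Borel--Moore homology---together with the codimension estimates above.

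It remains to check the two inputs for each example. For Bloch's higher cycle complexes re-indexed by dimension (the second example listed above), homotopy invariance and the localization sequences are Bloch's theorems and the Thom/purity isomorphism holds on the smooth ambients $\rmE\tilde{\rmG}^{gm,m}\times_{\rmG}\tilde{\rmX}$; for $\ell$-adic \'etale cohomology with $\ell$ invertible in $k$, homotopy invariance holds, the Thom/purity isomorphism is Gabber's absolute purity, the Borel--Moore theory is defined by the dualizing complex and carries the required localization sequences, and the hypothesis that $k$ has finite $\ell$-cohomological dimension guarantees that all the groups in sight are finite and that the relevant inverse limits and spectral sequences behave well; for singular cohomology and Borel--Moore homology over $\bbC$ everything is classical. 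The step I expect to be the main obstacle is the Borel--Moore bookkeeping: one must verify that the twists produced by the iterated vector-bundle pullbacks match exactly the re-indexing built into Definition~\ref{gen.equiv.coh.hom}(ii) and~\eqref{Gysin.eq.homology}, that the localization sequences give genuine isomorphisms in the correspondingly shifted range (not merely surjections or injections), and---in the $\ell$-adic case---that Gabber's absolute purity applies to the possibly singular schemes that occur, which is exactly where the restriction to $\rmG$-quasi-projective $\rmX$ and the finite cohomological dimension of $k$ are used.
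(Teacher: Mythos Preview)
Your argument is correct and more uniform than the paper's, but the route genuinely differs in the $\ell$-adic case. For Bloch's higher Chow groups you are essentially reproducing the Totaro/Edidin--Graham stabilization argument, and indeed the paper simply cites \cite{Tot} and \cite{EG} for that case. For $\ell$-adic \'etale cohomology, however, the paper does not argue via codimension estimates on the finite approximations $\rmE\tilde{\rmG}^{gm,m}$ at all. It passes instead to the \emph{simplicial} Borel construction $\pi_{\rmX}\colon \rmE\rmG\times_{\rmG}\rmX \to \rmB\rmG$, defines the equivariant Borel--Moore homology directly as hypercohomology with coefficients in the dualizing complex $\bDX = R\pi_{\rmX}^!(\mathbb{Q}_{\ell})$, and produces the equivariant fundamental class by analyzing the Leray spectral sequence $E_2^{u,v}=\rmH^u(\rmB\rmG, R^v\pi_{\rmX*}\bDX)\Rightarrow \H^{u+v}(\rmE\rmG\times_{\rmG}\rmX,\bDX)$: the class sits in the edge term $E_2^{0,-2d}$ and is automatically an infinite cycle, with the non-connected case handled by taking $\rmG/\rmG^o$-invariants rationally. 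Independence of the model is then obtained not from your double-fibration argument but from a separate comparison theorem \cite[Theorem 1.6]{J20} identifying the equivariant derived categories of the simplicial and ind-scheme models. Your approach treats all three theories on the same footing and is more elementary; the paper's approach constructs the fundamental class more intrinsically and sidesteps exactly the twist bookkeeping you flag as the delicate point.

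Two small corrections to your write-up. First, you do not need Gabber's absolute purity: the purity isomorphisms in ~\eqref{Gysin.eq.homology} are applied on the ambient schemes $\rmE\tilde{\rmG}^{gm,m}\times_{\rmG}\tilde{\rmX}$, which are smooth because $\tilde{\rmX}$ is smooth and the $\rmG$-action on $\rmE\tilde{\rmG}^{gm,m}$ is free, so classical relative purity for smooth pairs suffices. Second, the $\ell$-adic groups are not finite but finite-dimensional $\mathbb{Q}_{\ell}$-vector spaces; the finite $\ell$-cohomological dimension hypothesis is used for the vanishing range and convergence, not literal finiteness.
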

\begin{proof}
The extension of Bloch's higher Chow groups to the equivariant framework (called equivariant higher Chow groups) is carried out in \cite{Tot} and \cite{EG}.
The properties in Definition ~\ref{gen.equiv.coh.hom} are established in \cite{EG}. 
\vskip .1cm
We will next consider the case of $\ell$-adic \'etale cohomology. Let $\rmBG$, $\rmE \rmG{\underset {\rmG} \times} \rmX$ 
denote the simplicial scheme defined by the simplicial Borel construction as in \cite{J93} and \cite[section 5]{J20}. 
(As should be clear from the discussion in \cite[section 5]{J20}, one could replace the simplicial Borel construction 
$\rmE \rmG{\underset {\rmG} \times} \rmX$ by the ind-scheme $\{\rmE  \rmG^{gm,m}{\underset {\rmG} \times} \rmX|m\}$
or the ind-scheme $\{\rmE \tilde \rmG^{gm,m} {\underset {\rmG} \times} \rmX|m\}$ for a larger linear algebraic group $\tilde \rmG$ containing $\rmG$ as a
 closed subgroup-scheme, 
 as they will give equivalent derived categories. Therefore, we will use the simplicial Borel construction $\rmE\rmG{\underset {\rmG} \times} \rmX$ throughout the following discussion.)
Let $\pi_{\rmX}: \rmE \rmG{\underset {\rmG} \times} \rmX \ra \rmBG$
denote the obvious map induced by the projection $\rmX \ra {\rm Spec} \k$. Then we let $\bDX = R\pi_X^!(\Q_{\ell})$ denote the dualizing complex
on $\rmE \rmG{\underset {\rmG} \times} \rmX$. We will define 
\be \begin{equation}
\label{equiv.et.hom}
\rmH_{s,t}^{\rmG}(\rmX, \Q_{\ell}) = \H_{s,t}(\rmE \rmG{\underset {\rmG} \times} \rmX, \Q_{\ell}) = \H^{-s}(\rmE \rmG{\underset {\rmG} \times} \rmX, \bDX(-t)).
\end{equation} \ee
\vskip .1cm
Next we consider the spectral sequence, for any complex of $\ell$-adic sheaves $\rmK$ on $\rmE \rmG{\underset {\rmG} \times} \rmX$:
\be \begin{equation}
 \label{equiv.coh.ss}
\rmE_2^{\it u,v} = \rmH^{\it u}(\rmBG, R^{\it v}\pi_{X*}(\rmK)) \Ra \H^{\it u+v}(\rmE \rmG{\underset {\rmG} \times} \rmX, \rmK).
\end{equation} \ee
Taking $\rmK = \bDX$ in the above spectral sequence, 
one may observe that there is a fundamental class $[\rmX_{\bar {\it x}}] \in R^{-2d}\pi_{X*}(\bDX)_{\bar {\it x}}(-d)$, for each geometric point $\bar x $ on $\rmBG$. Moreover, in this case, $\rmE_2^{\it u, v} =0$ for $v <-2d$, or $v>0$ and also for $u<0$. It follows
that 
\be \begin{align}
\label{inf.cycle}
\rmH^0(\rmBG, R\pi_{ \rmX*}^{-2d}(\bDX)(-d) ) = \rmE_2^{0, -2d} &\cong \rmE_3^{0, -2d} \cong \cdots \rmE_{\infty}^{0, -2d} \cong \H^{-2d}(\rmE \rmG{\underset {\rmG} \times}  \rmX, \bDX(-d)) \\
=\rmH_{2d,d}(\rmE \rmG{\underset {\rmG} \times}  \rmX, \Q_{\ell}) &=\rmH_{2d,d}^{\rmG}(\rmX, \Q_{\ell}).\notag
\end{align} \ee
We next consider the special case where $\rmG$ is connected. Then one sees that the sheaf $\rmR\pi_{\rmX*}^{-2d}(\bDX)$ is constant on $\rmBG$, and 
therefore one obtains a class $[\rmX]_{\rmG} \in \rmH^0(\rmBG, R\pi_{ \rmX*}^{-2d}(\bDX)(-d))$. 
Now the identifications in ~\eqref{inf.cycle} shows that this is an infinite cycle in the above spectral sequence, 
 and therefore this defines
a fundamental class $[\rmX]_{\rmG} \in \rmH_{2d}(\rmE \rmG{\underset {\rmG} \times} \rmX, \Q_{\ell}(-d))= \H_{2d,d}^{\rmG}(\rmX, \Q_{\ell})$.
\vskip .1cm
When $\rmG$ is not connected, let $\rmG^o$ denote the connected component of the identity. Then the finite group $\rmG/\rmG^o$ acts on the stalks
of $\rmR\pi_{\rmX*}(\bDX)$. Since we are working with $\Q_{\ell}$-coefficients, one obtains isomorphisms 
\[\rmH^0(\rmBG, R\pi_{\rmX*}^{-2d}(\bDX)(-d)  )) \cong \rmH^0(\rmBG^o, R\pi_{\rmX*}^{-2d}(\bDX)(-d))^{\rmG/\rmG^o}, \mbox{ and }\]
\[\H^{\rm -2d}((\rmE \rmG{\underset {\rmG} \times}  \rmX, \bDX(-d)) \cong \H^{-2d}(\rmE \rmG^o{\underset {\rmG^o} \times}  \rmX, \bDX(-d))^{\rmG/\rmG^o}. \]
Therefore, the isomorphisms in ~\eqref{inf.cycle} extend to this case. It follows that one obtains a fundamental class $[\rmX]_{\rmG} \in \H^{-2d}(\rmE \rmG{\underset {\rmG} \times}  \rmX, \bDX(-d)) = \rmH_{2d,d}^{\rmG}(\rmX, \Q_{\ell})$.
\vskip .2cm
Next suppose that $i: \rmX \ra \tilde \rmX$ is a closed $\rmG$-equivariant immersion of a given $\rmG$-quasi-projective scheme into a smooth
$\rmG$-scheme $\tilde \rmX$. Let $\bDX$ (${\mathbb D}^{\rmG}_{\tilde \rmX}$) denote the dualizing complexes on $\rmE \rmG{\underset {\rmG} \times} \rmX$
($\rmE \rmG{\underset {\rmG} \times} \tilde \rmX$). (In view of the assumption that $\tilde \rmX$ is smooth, one may identify ${\mathbb D}^{\rmG}_{\tilde \rmX}$
 with $\Q_{\ell}[-2\tilde d](-\tilde d)$ where $\tilde \rmX$ is assumed to have pure dimension $\tilde d$.)
 Now the pairing ${\mathbb D}^{\rmG}_{\tilde \rmX} \otimes i_*{\rmR}i^!(\Q_{\ell}) \ra i_*\bDX$ induces a pairing of the corresponding hypercohomology spectral 
 sequences. Since the fundamental class $[\tilde \rmX]_{\rmG}$ is an infinite cycle, cap-product with this class then induces a map of
 spectral sequences from the spectral sequence in ~\eqref{equiv.coh.ss} for $K = i_*{\rmR}i^!(\Q_{\ell})$ to the corresponding spectral sequence for
  $K = \bDX$. Since this map of spectral sequences is an isomorphism at the $\rmE_2$-terms and both spectral sequences converge strongly, we obtain
  an isomorphism at the abutments, that is, an isomorphism
  \[ [\tilde \rmX]_{\rmG} \cap (\quad ):  \rmH^{s,t}_{\rmE \rmG{\underset {\rmG} \times}\rmX}(\rmE \rmG{\underset {\rmG} \times} \tilde \rmX, \Q_{\ell}) \ra \rmH_{2\tilde d-s, \tilde d-t}(\rmE \rmG{\underset {\rmG} \times}\rmX, \Q_{\ell}).\]
  \vskip .1cm
  Finally \cite[Theorem 1.6]{J20} provides a comparison of the 
   classifying spaces between the two models, one where $\rmBG$ is defined as an ind-scheme $\{\rmE\rmG^{gm,m}{\underset {\rmG} \times} (\Speck)|m\}$
   and the other is defined using the simplicial Borel construction. It shows that the two models have equivalent equivariant derived categories. Therefore,
   one may define equivariant Borel-Moore homology using either approaches and this verifies all the three required properties in Definition ~\ref{gen.equiv.coh.hom}.
  \vskip .1cm
  We skip the discussion for singular cohomology, as it follows along similar lines. (In fact one may also consult \cite[section 2]{J99}.)
  \end{proof}
 
\subsection{Riemann-Roch from forms of equivariant K-theory to equivariant Borel-Moore homology}
 \begin{theorem}
 \label{composite.RR.1}
 Let $\rm X \ra \{H_{s,t}(\rmX, \Gamma')|s,t\}$ denote a homology theory defined in the framework discussed in Definition ~\ref{gen.hom} so
 that it defines a generalized equivariant homology theory as in Definition ~\ref{gen.equiv.coh.hom}.
 \vskip .1cm
 (i) Let $f:\rmX \ra \rmY$ denote
a proper $\rmG$-equivariant map between two normal $\rmG$-quasi-projective schemes. Let $\tilde \rmG$ denote
either a ${\rm GL}_n$ or a finite product of groups of the form ${\rm GL}_n$ containing $\rmG$ as a closed subgroup-scheme. Then one obtains
the commutative diagram
\be \begin{equation}
   \label{RR.1.2}
\xymatrix{ {\pi_*{\bf G}(\rmX, \rmG)} \ar@<1ex>[r] \ar@<1ex>[d]^{Rf_*} &{\pi_*{\bf G}({\rm E}{\tilde \rmG}^{gm,m}{ \times}\rmX, \rmG)} \ar@<1ex>[r]^{\cong} \ar@<1ex>[d]^{Rf_*} & {\pi_*{\bf G}({\rm E}{\tilde \rmG}^{gm,m}{\underset {\rmG} \times}\rmX)} \ar@<1ex>[d]^{Rf_*} \ar@<1ex>[r]^{\tau_{\rmX,m}} &{ \rmH_{*, \bullet}({\rm E}{\tilde \rmG}^{gm,m}{\underset {\rmG} \times}\rmX, \Gamma')_{{\mathbb Q}}} \ar@<1ex>[d]^{f_*}\\
              {\pi_*{\bf G}(\rmY, \rmG)} \ar@<1ex>[r] & {\pi_*{\bf G}({\rm E}{\tilde \rmG}^{gm,m}{ \times}\rmY, \rmG)} \ar@<1ex>[r]^{\cong}&  {\pi_*{\bf G}({\rm E}{\tilde \rmG}^{gm,m}{\underset {\rmG} \times}\rmY)}  \ar@<1ex>[r]^{\tau_{\rmY,m}} & { \rmH_{*, \bullet}({\rm E}{\tilde \rmG}^{gm,m}{\underset {\rmG} \times}\rmY, \Gamma')_{{\mathbb Q}} }}
\end{equation} \ee
for any fixed integer $m \ge 0$. 
\vskip .1cm
(ii) Assume in addition to the hypotheses in (i) that the map $f$ is also perfect, for example, a map that is a local complete intersection morphism.
Then one obtains the commutative diagram
\be \begin{equation}
 \label{RR.2.2}
\xymatrix{ {\pi_*{\bKH}(\rmX, \rmG)} \ar@<1ex>[r] \ar@<1ex>[d]^{Rf_*} &{\pi_*{\bKH}({\rm E}{\tilde \rmG}^{gm,m}{ \times}\rmX, \rmG)} \ar@<1ex>[r]^{\cong} \ar@<1ex>[d]^{Rf_*}  & {\pi_*{\bKH}({\rm E}{\tilde \rmG}^{gm,m}{\underset {\rmG} \times}\rmX)} \ar@<1ex>[d]^{Rf_*} \ar@<1ex>[r]^{\tau_{\rmX,m}} &{ \rmH_{*, \bullet}({\rm E}{\tilde \rmG}^{gm,m}{\underset {\rmG} \times}\rmX, \Gamma')_{{\mathbb Q}}} \ar@<1ex>[d]^{f_*}\\
              {\pi_*{\bKH}(\rmY, \rmG)} \ar@<1ex>[r] &{\pi_*{\bKH}({\rm E}{\tilde \rmG}^{gm,m}{ \times}\rmY, \rmG)} \ar@<1ex>[r]^{\cong} &  {\pi_*{\bKH}({\rm E}{\tilde \rmG}^{gm,m}{\underset {\rmG} \times}\rmY)}  \ar@<1ex>[r]^{\tau_{\rmY,m}} & { \rmH_{*, \bullet}({\rm E}{\tilde \rmG}^{gm,m}{\underset {\rmG} \times}\rmY, \Gamma')_{{\mathbb Q}} }}
\end{equation} \ee
for any fixed integer $m \ge 0$. 
\end{theorem}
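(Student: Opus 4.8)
The plan is to obtain each of the two commutative diagrams \eqref{RR.1.2} and \eqref{RR.2.2} by pasting three squares, of which the two left squares are already available and only the rightmost one — the square involving the Riemann--Roch transformation $\tau_{\cdot,m}$ into Borel--Moore homology — requires a separate argument. Indeed, for part (i) the two left squares of \eqref{RR.1.2} are, after passing to homotopy groups, precisely the two left squares of \eqref{RR1.1} in Theorem~\ref{RR.Gth}, whose homotopy-commutativity, together with the compatibility of the vertical $Rf_*$-maps across the equivalence ${\bf G}({\rm E}{\tilde \rmG}^{gm,m}\times\rmX,\rmG)\simeq {\bf G}({\rm E}{\tilde \rmG}^{gm,m}{\underset {\rmG} \times}\rmX)$, was proved there; similarly for part (ii) one uses the two left squares of \eqref{RR2.1} in Theorem~\ref{RR.KH}. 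Hence it suffices to verify commutativity of the right-hand square in each case, for the fixed value of $m$.

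For part (i), fix $m\ge 0$ and write $\rmX_m := {\rm E}{\tilde \rmG}^{gm,m}{\underset {\rmG} \times}\rmX$ and $\rmY_m := {\rm E}{\tilde \rmG}^{gm,m}{\underset {\rmG} \times}\rmY$. Since $\rmX$ and $\rmY$ are normal $\rmG$-quasi-projective (Standing Hypothesis~\ref{stand.hyp.1}(iv)) and ${\rm E}{\tilde \rmG}^{gm,m}$ is a quasi-projective scheme carrying a free $\tilde\rmG$-action, the mixed quotients $\rmX_m$ and $\rmY_m$ are quasi-projective schemes of finite type over $k$, hence lie in the class of schemes for which the homology theory and its Riemann--Roch transformation of Definition~\ref{gen.hom} are defined. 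The proper $\rmG$-equivariant map $f$ induces a map $f_m\colon \rmX_m\to\rmY_m$ by passing to the free quotient of ${\rm E}{\tilde \rmG}^{gm,m}\times f$, and $f_m$ is again proper; moreover the vertical arrow labelled $Rf_*$ in the third column of \eqref{RR.1.2} is, by the construction of proper pushforward on the Borel construction, exactly the pushforward $Rf_{m*}$ on $\pi_*{\bf G}(\rmX_m)$. Thus the right square of \eqref{RR.1.2} asserts $f_{m*}\circ\tau_{\rmX,m}=\tau_{\rmY,m}\circ Rf_{m*}$, which is precisely the covariant functoriality of the non-equivariant Riemann--Roch transformation $\tau$ for the proper map $f_m$ --- this is built into Definition~\ref{gen.hom}(vi). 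No passage to a compatible family over $m$ is needed, since the statement is for each fixed $m$, so the Gysin structure maps of \eqref{Gysin.eq.homology} play no role here.

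For part (ii) the argument is identical once one has equipped equivariant homotopy $K$-theory with a Riemann--Roch transformation $\tau\colon \pi_*{\bKH}(\rmZ)\to \rmH_{*,\bullet}(\rmZ,\Gamma')_{{\mathbb Q}}$ for quasi-projective $\rmZ$ over $k$ that is covariant for proper \emph{perfect} maps. This is obtained by composing the $\bG$-theory transformation of Definition~\ref{gen.hom}(vi) with the canonical natural map ${\bKH}(\rmZ)=|\,n\mapsto {\bf K}(\rmZ\times\Delta[n])\,|\to |\,n\mapsto {\bf G}(\rmZ\times\Delta[n])\,|\xrightarrow{\ \simeq\ }{\bf G}(\rmZ)$, where the last equivalence is homotopy invariance of $\bG$-theory; this natural map commutes with proper pushforward exactly when the map being pushed forward is perfect, since then the pushforward of equivariant perfect complexes and the pushforward of equivariant coherent sheaves agree. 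As $f$ is assumed perfect, so is $f_m$, and the right square of \eqref{RR.2.2} commutes by the same reasoning as in (i).

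\textbf{Main obstacle.} The only non-formal ingredient is the construction of the Riemann--Roch transformation for ${\bKH}$ and the verification of its compatibility with proper pushforward along perfect maps --- and, upstream of it, the fact that ${\bKH}$ of the mixed quotient $\rmX_m$ is computed by $\bG$-theory in the way used above. Granting this, both diagrams are a purely formal consequence of Theorems~\ref{RR.Gth} and~\ref{RR.KH} together with the non-equivariant Riemann--Roch theorem encoded in Definition~\ref{gen.hom}(vi).
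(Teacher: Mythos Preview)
Your proof is correct and follows essentially the same approach as the paper: the left squares come from Theorems~\ref{RR.Gth} and~\ref{RR.KH}, the rightmost square in (i) is the non-equivariant Riemann--Roch of Definition~\ref{gen.hom}(vi) applied to the proper map $f_m$ on the Borel construction, and for (ii) one factors $\tau_{\cdot,m}$ through the natural comparison ${\bKH}\to{\bf G}$, which commutes with $Rf_*$ because $f$ (hence $f_m$) is perfect. One small bookkeeping point: the two left squares of \eqref{RR.1.2} are not literally the two left squares of \eqref{RR1.1} but rather the composite of its first two squares together with its third square (the derived-completion column has been suppressed), though this does not affect the argument.
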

 \begin{proof}
  The commutativity of the first squares in both diagrams ~\ref{RR.1.2} and ~\ref{RR.2.2} is clear from Theorems ~\ref{RR.Gth} and ~\ref{RR.KH}. The commutativity of the rest of the 
  diagram ~\ref{RR.1.2} is also clear since the induced map ${\rm E}{\tilde \rmG}^{gm,m}{\underset {\rmG} \times}\rmX \ra {\rm E}{\tilde \rmG}^{gm,m}{\underset {\rmG} \times}\rmY$ is
 a proper map. The commutativity of the second square in (ii) is also clear.
 To prove the commutativity of the last square in (ii), one first observes that sending a perfect complex to itself, but viewed
 as a pseudo-coherent complex (or a complex of quasi-coherent $\O$-modules with bounded coherent cohomology) defines a natural map
 $\bKH(\quad) \ra {\bf G}(\quad)$ of presheaves of spectra. Under the assumption that the map $f$ is perfect and $\rmG$-equivariant,
 one may see that this map commutes with push-forward by $f_*$, resulting in the homotopy commutative square:
\[ \xymatrix{ {\bKH({\rm E}{\tilde \rmG}^{\it gm,m}{\underset {\rmG} \times}\rmX)} \ar@<1ex>[r] \ar@<1ex>[d] ^{Rf_*} & {{\bf G}({\rm E}{\tilde \rmG}^{gm,m}{\underset {\rmG} \times}\rmX)} \ar@<1ex>[d]^{Rf_*}\\
             {\bKH({\rm E}{\tilde \rmG}^{\it gm,m}{\underset {\rmG} \times}\rmY)} \ar@<1ex>[r] & {{\bf G}({\rm E}{\tilde \rmG}^{gm,m}{\underset {\rmG} \times}\rmY)} }
\]
 One may take the homotopy groups of the above diagram and compose the horizontal maps in the resulting square with the corresponding horizontal maps in the third square of the diagram ~\eqref{RR.1.2} to obtain the commutative
 square forming the third square in the diagram ~\eqref{RR.2.2}. 
 \end{proof}
 \vskip .1cm
 \begin{lemma}
 \label{RR.extends.inverse.systems.1}
 Assume the hypotheses as in Definitions ~\ref{gen.coh}, ~\ref{gen.hom} and ~\ref{gen.equiv.coh.hom}  on the homology/cohomology theories and that $\rmX$ is a $\rmG$-quasi-projective normal scheme, for a linear algebraic group. Then the Riemann-Roch transformation
 $\tau_{\rmX, m}: \pi_*{\bf G}({\rm E}{\tilde \rmG}^{gm,m}{\underset {\rmG} \times}\rmX) \ra \rmH_{*, \bullet}({\rm E}{\tilde \rmG}^{gm,m}{\underset {\rmG} \times}\rmX, \Gamma')_{{\mathbb Q}}$
 extends to a Riemann-Roch transformation
 $\tau_{\rmX} = \limm \tau_{\rmX, m}: \limm \pi_*{\bf G}({\rm E}{\tilde \rmG}^{gm,m}{\underset {\rmG} \times}\rmX) \ra \limm \rmH_{*, \bullet}({\rm E}{\tilde \rmG}^{gm,m}{\underset {\rmG} \times}\rmX, \Gamma')_{{\mathbb Q}}$.
\end{lemma}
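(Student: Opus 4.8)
The plan is to show that the maps $\tau_{\rmX,m}$ assemble into a morphism of the two inverse systems appearing in the statement, after which $\tau_{\rmX}$ is produced simply by applying $\limm$. On the source side the transition maps of $\{\pi_*\bG(\rmE\tilde\rmG^{gm,m}\times_{\rmG}\rmX)\}_m$ are, via \cite[Proposition 5.7]{CJP23}, the derived pullbacks $L\rho_m^*$ along the inclusions $\rho_m\colon\rmE\tilde\rmG^{gm,m}\times_{\rmG}\rmX\hookrightarrow\rmE\tilde\rmG^{gm,m+1}\times_{\rmG}\rmX$, and on the target side they are the Gysin maps of ~\eqref{Gysin.eq.homology}, with the indexing used in Definition ~\ref{gen.equiv.coh.hom}. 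So the first step is to record that each $\rho_m$ is a regular closed immersion of finite Tor-dimension --- it is the base change along $\rmX\hookrightarrow\tilde\rmX$ of the regular closed immersion $\rmE\tilde\rmG^{gm,m}\times_{\rmG}\tilde\rmX\hookrightarrow\rmE\tilde\rmG^{gm,m+1}\times_{\rmG}\tilde\rmX$ of smooth schemes coming from the finite approximations of $\rmE\tilde\rmG$ in \cite[section 3.1]{CJP23} --- so that $L\rho_m^*$ is defined on $\bG$, and to identify its normal bundle $N_m$ on $\rmE\tilde\rmG^{gm,m}\times_{\rmG}\rmX$ with the bundle associated to the $\rmG$-representation $W_m$ of ``new directions'' added in passing from level $m$ to $m+1$. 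If $\V_m$ denotes the vector bundle on $\rmE\tilde\rmG^{gm,m}\times_{\rmG}\rmX$ associated to the ambient representation $V_m$ defining the $m$-th approximation, then $\rho_m^*\V_{m+1}\cong\V_m\oplus N_m$, since $V_{m+1}\cong V_m\oplus W_m$ as $\rmG$-representations.

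The heart of the argument is the commutativity of the square whose top and bottom arrows are $\tau_{\rmX,m+1}$ and $\tau_{\rmX,m}$, whose left arrow is $L\rho_m^*$, and whose right arrow is the Gysin map $\rho_m^{!}$ of ~\eqref{Gysin.eq.homology}. This is a Riemann--Roch statement for the l.c.i.\ morphism $\rho_m$: reducing, by deformation to the normal cone, to the case of the zero section of the bundle $N_m$ --- where $L\rho_m^*$ and $\rho_m^{!}$ are the inverses of the (isomorphism) pullbacks along the bundle projection, using homotopy invariance of $\bG$ together with the Thom and projective-bundle properties packaged into Definition ~\ref{gen.hom} --- the square becomes the assertion that the Riemann--Roch transformation is compatible with smooth pullback, which holds only up to the Todd class $\td(N_m)$, the standard Grothendieck--Riemann--Roch correction. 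To absorb it one takes $\tau_{\rmX,m}$ to be the normalized transformation $\td(\V_m)^{-1}\cap\tau$, where $\tau$ is the Riemann--Roch transformation of Definition ~\ref{gen.hom}(vi) for $\rmE\tilde\rmG^{gm,m}\times_{\rmG}\rmX$; then multiplicativity of $\td$ along $\rho_m^*\V_{m+1}\cong\V_m\oplus N_m$, combined with the projection formula $\rho_m^{!}(a\cap b)=\rho_m^*a\cap\rho_m^{!}b$, makes the square commute on the nose. This is the incarnation, in the present axiomatic Borel--Moore setting, of the Edidin--Graham construction of equivariant Riemann--Roch, cf.\ \cite{EG}. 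One still has to check that this normalization does not disturb the rightmost (proper pushforward) squares of ~\eqref{RR.1.2}, and it does not: $\V_m$ on $\rmE\tilde\rmG^{gm,m}\times_{\rmG}\rmX$ is the pullback of $\V_m$ on $\rmE\tilde\rmG^{gm,m}\times_{\rmG}\rmY$ along the induced proper map, so the projection formula for proper pushforward applies.

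Granting the compatible family, set $\tau_{\rmX}:=\limm\tau_{\rmX,m}$. By the stability hypotheses for equivariant Borel--Moore homology imposed before Definition ~\ref{gen.equiv.coh.hom} the target system is pro-constant, so there is no $\lim^1$-obstruction on that side, the source and target of $\tau_{\rmX}$ are exactly those asserted, and covariant functoriality of $\tau_{\rmX}$ for proper maps follows by applying $\limm$ to the rightmost squares of ~\eqref{RR.1.2}, which are compatible as $m$ varies by \cite[Proposition 5.7]{CJP23}. The main obstacle is precisely this Todd-class bookkeeping: one must (i) pin down the transition maps $\rho_m$ and their normal bundles $N_m$ in terms of the representations used to build $\rmE\tilde\rmG^{gm,m}$ in \cite[section 3.1]{CJP23}, and (ii) make sure that ``Riemann--Roch is compatible with l.c.i.\ pullback up to a Todd class'' is genuinely available for the \emph{general} Borel--Moore theories of Definition ~\ref{gen.hom}; for the listed examples (higher Chow groups, $\ell$-adic and singular Borel--Moore homology, Deligne homology) this is classical (Gillet \cite{Gi}, SGA~6), but since Definition ~\ref{gen.hom}(vi) only posits covariance for proper maps it should be added as an explicit axiom, or deduced from the Gysin and Thom-isomorphism axioms of Definition ~\ref{gen.hom}.
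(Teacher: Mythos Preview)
Your approach is correct but differs from the paper's. You establish compatibility of the $\tau_{\rmX,m}$ by proving a Riemann--Roch statement for the regular closed immersions $\rho_m$, which forces the Todd-class normalization $\td(\V_m)^{-1}\cap\tau$ in the style of Edidin--Graham. The paper instead bypasses this by working on the cohomology-with-supports side throughout: using the $\rmG$-equivariant closed immersion $\rmX\hookrightarrow\tilde\rmX$ into a smooth ambient, it identifies $\pi_*\bG(\rmE\tilde\rmG^{gm,m}\times_{\rmG}\rmX)$ with $\pi_*\bK_{\rmE\tilde\rmG^{gm,m}\times_{\rmG}\rmX}(\rmE\tilde\rmG^{gm,m}\times_{\rmG}\tilde\rmX)$, applies the local Chern character into $\rmH^{*,\bullet}_{\rmE\tilde\rmG^{gm,m}\times_{\rmG}\rmX}(\rmE\tilde\rmG^{gm,m}\times_{\rmG}\tilde\rmX,\Gamma)_{\Q}$, cups with the Todd class of $T\tilde\rmX$ (not of $T(\rmE\tilde\rmG^{gm,m}\times_{\rmG}\tilde\rmX)$), and caps with the equivariant fundamental class of Definition~\ref{equiv.fund.class}. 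Since the transition maps on both $\bK$-with-supports and cohomology-with-supports are plain restrictions $i^*$, the local Chern characters $ch_m$ are compatible by naturality of Chern classes, the Todd class of the fixed equivariant bundle $T\tilde\rmX$ is automatically compatible, and the fundamental classes match by stability; the limit is then assembled piecewise.

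What each approach buys: the paper's route is shorter and needs nothing beyond the axioms of Definition~\ref{gen.hom}, since it never invokes Riemann--Roch for l.c.i.\ pullback --- the very issue you flag at the end as possibly requiring an extra axiom. Your route is more explicit about the normalization and makes transparent why the transformation is independent of the choice of approximation; it also connects directly to the Edidin--Graham picture. Note, incidentally, that the paper's choice of Todd class (that of $T\tilde\rmX$ rather than of the full Borel-construction tangent bundle) is exactly your normalization in disguise: the two differ by $\td(\V_m)$, so the paper is silently making the same adjustment you make explicitly.
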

\begin{proof}
 One first recalls from \cite{Gi}, how the Riemann-Roch transformation is defined. Making use of the assumption that $\rmX$ is $\rmG$-quasi-projective, one
 may find a smooth scheme $\tilde \rmX$ provided with a $\rmG$-action and containing $\rmX$ as a $\rmG$-stable closed subscheme. 
 Therefore, one first defines the (local equivariant) Chern-character
 \[ch_m: \pi_*{\bf G}({\rm E}{\tilde \rmG}^{gm,m}{\underset {\rmG} \times}\rmX) \cong \pi_*{\bK}_{{\rm E}{\tilde \rmG}^{gm,m}{\underset {\rmG} \times}\rmX}({\rm E}{\tilde \rmG}^{gm,m}{\underset {\rmG} \times}\tilde \rmX) {\overset {ch_m} \ra} \rmH^{*, \bullet}_{{\rm E}{\tilde \rmG}^{gm,m}{\underset {\rmG} \times}\rmX}({\rm E}{\tilde \rmG}^{gm,m}{\underset {\rmG} \times}\tilde \rmX, \Gamma)_{\Q}. \]
 These are clearly compatible as $m$-varies. Therefore, we obtain an induced local Chern-character:
 \[ch =\limm ch_m: \limm \pi_*{\bf G}({\rm E}{\tilde \rmG}^{gm,m}{\underset {\rmG} \times}\rmX) \cong \limm \pi_*{\bK}_{{\rm E}{\tilde \rmG}^{gm,m}{\underset {\rmG} \times}\rmX}({\rm E}{\tilde \rmG}^{gm,m}{\underset {\rmG} \times}\tilde \rmX)\]
 \[{\overset {\limm ch_m} \ra} \limm \rmH^{*, \bullet}_{{\rm E}{\tilde \rmG}^{gm,m}{\underset {\rmG} \times}\rmX}({\rm E}{\tilde \rmG}^{gm,m}{\underset {\rmG} \times}\tilde \rmX, \Gamma)_{\Q}. \]
 In view of the assumption that for each pair of  fixed
 integers $s, t$, the inverse system
 \[\{\rmH^{s,t}(\rmE \tilde \rmG^{gm,m}\times _{\rmG}\tilde \rmX, \Gamma)_{\Q} |m\}\]
 stabilizes as $m \ra \infty$, one may
 readily define the  Todd class of the tangent bundle to the scheme $\tilde \rmX$ with values in the inverse limit $\limm \Pi_{s,t}\rmH^{s,t}(\rmE \rmG^{gm,m}\times _{\rmG}\tilde \rmX, \Gamma)_{\Q}$.
 In view of the presence of an equivariant fundamental class $[\tilde \rmX]_{\rmG}$ as in Definition ~\eqref{gen.equiv.coh.hom}, 
 and since $\{ \rmH_{i+2\tilde d_m -2g,j+\tilde d_m-g}(\rmE \tilde \rmG^{gm,m}\times _{\rmG}\rmX, \Gamma') |m\}$  stabilizes as $m \ra \infty$ (see
 Definition ~\ref{gen.equiv.coh.hom}(ii)), one may now
 define the Riemann-Roch transformation, by cupping the local-Chern-character homomorphism with the above equivariant Todd class (so that the composite map takes values 
 in $\limm \Pi_{s,t}\rmH^{s,t}_{\rmE \tilde \rmG^{gm,m}\times _{\rmG}\rmX}(\rmE \tilde \rmG^{gm,m}\times _{\rmG}\tilde \rmX, \Gamma)_{\Q}$), followed by capping with
 the equivariant fundamental class $[\tilde \rmX]_{\rmG}$. It is now straight-forward to verify the resulting Riemann-Roch transformation is the inverse limit
 \[\limm  \tau_{\rmX, m}: \limm \pi_*{\bf G}({\rm E}{\tilde \rmG}^{gm,m}{\underset {\rmG} \times}\rmX) \ra \limm \rmH_{*, \bullet}({\rm E}{\tilde \rmG}^{gm,m}{\underset {\rmG} \times}\rmX, \Gamma')_{{\mathbb Q}}.\]
 \end{proof}
 
\vskip .1cm

\begin{corollary}
 \label{RR.extends.inverse.systems.2}
 Assume in addition to the hypotheses in Lemma ~\ref{RR.extends.inverse.systems.1} that $\rmY$ is a normal $\rmG$-quasi-projective scheme and that 
 $f: \rmX \ra \rmY$ is a $\rmG$-equivariant proper map, with $\rmX$ also a normal $\rmG$-quasi-projective scheme. Then the following diagram commutes for every choice of $m$ sufficiently large, and where the slant maps in the right square are defined
 using the Gysin maps as in ~\eqref{Gysin.eq.homology} and where $c= dim(\rmE \tilde \rmG^{gm,m+1}) - dim(\rmE \tilde \rmG^{gm,m})$:
 \be \begin{equation}
 \label{RR.inverse.systems}
 \xymatrix{  {\pi_*\bG(\rmX, \rmG)}\ar@<1ex>[ddd]^{Rf_*}\ar@<1ex>[dr]^{id} \ar@<1ex>[rrr]^{\tau_{\rmX}^{m+1}}  &&&  {\Pi_{\it u, v}\rmH_{\it u+2c, v+c}(\rmE \tilde \rmG^{gm,m+1}\times _{\rmG} \rmX, \Gamma')_{\Q}} \ar@<1ex>[dl]^{} \ar@<1ex>[ddd]^{f_*}\\
  &{\pi_*\bG(\rmX, \rmG)} \ar@<1ex>[r]^(.4){\tau_{\rmX}^m} \ar@<1ex>[d]^{Rf_*} & {\Pi_{\it u, v}\rmH_{\it u, v}(\rmE \tilde \rmG^{gm,m}\times _{\rmG} \rmX, \Gamma')_{\Q}}  \ar@<1ex>[d]^{f_*} \\
  &{\pi_*\bG(\rmY, \rmG)} \ar@<1ex>[r]^(.4){\tau_{\rmY}^m} &{\Pi_{\it u, v}\rmH_{\it u, v}(\rmE \tilde \rmG^{gm,m}\times _{\rmG} \rmY, \Gamma')_{\Q}}\\
  {\pi_*\bG(\rmY, \rmG)} \ar@<1ex>[ur]^{id}  \ar@<1ex>[rrr]^{\tau_{\rmY}^{m+1}}   &&&    {\Pi_{\it u, v}\rmH_{\it u+2c, v+c}(\rmE \tilde \rmG^{gm,m+1}\times _{\rmG} \rmY, \Gamma')_{\Q}} \ar@<1ex>[ul]_{}.}
 \end{equation} \ee
 \end{corollary}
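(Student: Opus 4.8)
The plan is to assemble the diagram \eqref{RR.inverse.systems} out of three pieces that are already available: (a) the commutativity of the inner square (the one with vertices $\pi_*\bG(\rmX,\rmG)$, $\pi_*\bG(\rmY,\rmG)$ and the two homology groups at level $m$), which is exactly the content of the third square in \eqref{RR.1.2} of Theorem \ref{composite.RR.1}(i) after identifying $\pi_*\bG(\rmE\tilde\rmG^{gm,m}\times\rmX,\rmG)\cong\pi_*\bG(\rmE\tilde\rmG^{gm,m}\times_\rmG\rmX)$ and likewise for $\rmY$, and recalling from Lemma \ref{RR.extends.inverse.systems.1} that $\tau^m_{\rmX}$ and $\tau^m_{\rmY}$ are the maps written $\tau_{\rmX,m}$, $\tau_{\rmY,m}$ there; (b) the commutativity of the two ``triangular'' slant faces, i.e.\ that $\tau^{m+1}_{\rmX}$ followed by the Gysin map $i^*\colon \rmH_{u+2c,v+c}(\rmE\tilde\rmG^{gm,m+1}\times_\rmG\rmX,\Gamma')_{\QQ}\to \rmH_{u,v}(\rmE\tilde\rmG^{gm,m}\times_\rmG\rmX,\Gamma')_{\QQ}$ of \eqref{Gysin.eq.homology} equals $\tau^m_{\rmX}$, together with the identification of the two copies of $\pi_*\bG(\rmX,\rmG)$ by the identity (and the same for $\rmY$); and (c) the commutativity of the outer square relating level $m+1$ for $\rmX$ and $\rmY$, again an instance of \eqref{RR.1.2}. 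Once (a), (b), (c) are in hand, a diagram chase shows the whole prism commutes: any two paths from $\pi_*\bG(\rmX,\rmG)$ (upper left) to $\Pi_{u,v}\rmH_{u,v}(\rmE\tilde\rmG^{gm,m}\times_\rmG\rmY,\Gamma')_{\QQ}$ agree because they can be connected through the inner square using (a) and the slant faces using (b); and the top and bottom faces are (c).

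The first step is to record that the inner square commutes. This is immediate from Theorem \ref{composite.RR.1}(i): the map $Rf_*$ on the Borel constructions is proper (the induced map $\rmE\tilde\rmG^{gm,m}\times_\rmG\rmX\to\rmE\tilde\rmG^{gm,m}\times_\rmG\rmY$ is proper because $f$ is), so $f_*\circ\tau_{\rmX,m}=\tau_{\rmY,m}\circ Rf_*$, and the two $\bG$-theory squares in \eqref{RR.1.2} give the left-hand half. The outer square is the same statement with $m$ replaced by $m+1$, so both horizontal faces of the prism are free.

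The second step — and this is where the real work sits — is the commutativity of the slant faces, namely that the Riemann--Roch transformations are compatible with the Gysin maps of \eqref{Gysin.eq.homology} as $m$ increases. Here I would recall the construction of $\tau^m_{\rmX}$ from the proof of Lemma \ref{RR.extends.inverse.systems.1}: it is the local equivariant Chern character $ch_m$ into $\rmH^{*,\bullet}_{\rmE\tilde\rmG^{gm,m}\times_\rmG\rmX}(\rmE\tilde\rmG^{gm,m}\times_\rmG\tilde\rmX,\Gamma)_{\QQ}$, cup with the equivariant Todd class of the tangent bundle of $\tilde\rmX$, then cap with the equivariant fundamental class $[\tilde\rmX]_{\rmG,m}$. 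So the compatibility claim splits into three sub-compatibilities with the restriction map $j^*\colon \rmE\tilde\rmG^{gm,m}\times_\rmG\tilde\rmX\to\rmE\tilde\rmG^{gm,m+1}\times_\rmG\tilde\rmX$ (whose effect on the relevant cohomology is exactly the bottom map of \eqref{Gysin.eq.homology}): that $ch_{m}=j^*\circ ch_{m+1}$ — which is the statement ``these are clearly compatible as $m$ varies'' in the proof of Lemma \ref{RR.extends.inverse.systems.1}, coming from naturality of the Chern character for the flat/open immersion $j$; that the Todd class at level $m+1$ restricts to the Todd class at level $m$, because both are the Todd class of (the pullback of) the tangent bundle $T_{\tilde\rmX}$ and $j$ is compatible with these pullbacks; and that the fundamental classes are compatible, $j^*[\tilde\rmX]_{\rmG,m+1}=[\tilde\rmX]_{\rmG,m}$ up to the dimension shift by $c$, which is built into Definition \ref{equiv.fund.class} and the stability hypotheses of Definition \ref{gen.equiv.coh.hom}. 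The only subtlety is to make sure that ``cap with the fundamental class'' intertwines the cohomological restriction $j^*$ on $\rmE\tilde\rmG^{gm,\bullet}\times_\rmG\tilde\rmX$ with the homological Gysin map on $\rmE\tilde\rmG^{gm,\bullet}\times_\rmG\rmX$ in the precise sense encoded by the commuting square \eqref{Gysin.eq.homology}; this is a projection-formula-type identity for the pairing $\Gamma'(r)\otimes\Gamma(s)\to\Gamma'(r-s)$ together with the fact that capping with $[\tilde\rmX]_{\rmG}$ is the vertical isomorphism of \eqref{Gysin.eq.homology}, so by definition the square commutes. Assembling these, the slant faces commute, the identity maps make the two copies of $\pi_*\bG(-,\rmG)$ literally equal, and the whole diagram \eqref{RR.inverse.systems} commutes.

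The main obstacle I expect is purely bookkeeping: keeping the dimension shifts straight. The Gysin map raises homological degree by $2c$ and weight by $c$ with $c=\dim(\rmE\tilde\rmG^{gm,m+1})-\dim(\rmE\tilde\rmG^{gm,m})$, while the fundamental classes live in degrees $2d+2\tilde d_m-2g$, and one must check these indices line up across the two levels so that the slant maps in \eqref{RR.inverse.systems} really are the maps induced by \eqref{Gysin.eq.homology}. No new idea is needed beyond the constructions already in Lemma \ref{RR.extends.inverse.systems.1} and Definition \ref{equiv.fund.class}; the content is that each ingredient of the Riemann--Roch transformation (Chern character, Todd class, fundamental class) is natural for the transition maps, and that these naturalities are exactly compatible with the Gysin maps through the isomorphisms of \eqref{Gysin.eq.homology}.
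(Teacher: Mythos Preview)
Your reductions for the inner square, the outer square, and the two ``slant'' faces (the compatibilities $\mathrm{Gysin}\circ\tau^{m+1}_{\rmX}=\tau^m_{\rmX}$ and likewise for $\rmY$) are correct and match the paper: the inner and outer squares are instances of Theorem~\ref{composite.RR.1}(i), and the slant faces follow from the construction in Lemma~\ref{RR.extends.inverse.systems.1} exactly as you outline (naturality of $ch$, of the Todd class, and of the fundamental class under the restriction $j^*$).

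The gap is the right face of the cube, namely the compatibility
\[
f_*^m\circ i^*_{\rmX}\;=\;i^*_{\rmY}\circ f_*^{m+1}
\]
between proper pushforward and the Gysin transition maps of \eqref{Gysin.eq.homology}. You assert that ``a diagram chase shows the whole prism commutes'' once the other five faces are known, but this is false in general: five commuting faces of a cube only force the sixth to commute on the image of the map entering that face, and $\tau^{m+1}_{\rmX}$ is typically far from surjective onto $\Pi_{u,v}\rmH_{u+2c,v+c}(\rmE\tilde\rmG^{gm,m+1}\times_{\rmG}\rmX,\Gamma')_{\Q}$. The right face therefore requires its own argument, and this is not ``purely bookkeeping'': the Gysin maps in \eqref{Gysin.eq.homology} are defined via Poincar\'e duality with a \emph{chosen} smooth ambient $\tilde\rmX$, so one must verify that proper pushforward along $f$ is compatible with these particular identifications.

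The paper handles this by first factoring $f$ as a closed immersion $\rmX\hookrightarrow\rmY\times{\mathbb P}^n$ followed by the projection $\rmY\times{\mathbb P}^n\to\rmY$; the projection case is dispatched by the projective bundle formula, and for the closed immersion case one chooses compatible smooth envelopes $\tilde\rmX\hookrightarrow\tilde\rmY$ and reduces the right face to a diagram in cohomology with supports, where the vertical maps are cup-product with the Koszul--Thom classes $\lambda_{-1}(\cN_m)$ and $\lambda_{-1}(\cN_{m+1})$ of the normal bundles at the two levels. The key point is then the identity $i_{\rmY}^*(\lambda_{-1}(\cN_{m+1}))=\lambda_{-1}(\cN_m)$. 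You should add this argument; without it the proof is incomplete.
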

\begin{proof} The inner and outer squares clearly commute by Theorem ~\ref{composite.RR.1}. Lemma ~\ref{RR.extends.inverse.systems.1}
shows that the Riemann-Roch transformations extend to define a Riemann-Roch transformation with  the inverse limits considered
there as the targets. This shows the squares in the top and bottom commute. Therefore, it suffices to show the right square commutes.
In view of our assumption, one may factor the map $f$ as the composition of a $\rmG$-equivariant closed immersion
 $i:\rm X \ra \rmY \times {\mathbb P}^n$ for some $n>>0$ and the projection $\pi: \rmY \times {\mathbb P}^n \ra \rmY$. Then 
 the well-known formula for the homology of the projective spaces enables one to readily 
  check that the corresponding diagram commutes when $f$ is replaced by $\pi$.
  \vskip .1cm
 Therefore, it remains to consider the case of a closed immersion $i: \rmX \ra \rmY$. Then, in view of our assumptions on
 $\rmX$ and $\rmY$ as $\rmG$-quasi-projectives schemes, we may find closed $\rmG$ equivariant immersions $\rmX \ra \tilde \rmX$
 and $\rmY \ra \tilde \rmY$, together with a closed $\rmG$-equivariant immersion $\tilde \rmX \ra \tilde \rmY$ so that the square
 \[ \xymatrix{{\rmX} \ar@<1ex>[r] \ar@<1ex>[d] & {\rmY} \ar@<1ex>[d]\\
              {\tilde \rmX} \ar@<1ex>[r] & \tilde \rmY}
 \]
 commutes. Then the commutativity of the
 right square in the diagram ~\eqref{RR.inverse.systems} amounts to the commutativity of the diagram (where $c'= dim(\tilde \rmY) - dim(\tilde \rmX)$):
 \vskip .1cm
 \[\xymatrix{{\rmH^{*, \bullet}_{\rmE \tilde \rmG^{gm,m}\times _{\rmG} \rmX}(\rmE \tilde \rmG^{gm,m}\times _{\rmG} \tilde \rmX, \Gamma)} \ar@<1ex>[d]^{\alpha_m} & {\rmH^{*, \bullet}_{\rmE \tilde \rmG^{gm,m+1}\times _{\rmG} \rmX}(\rmE \tilde \rmG^{gm,m+1}\times _{\rmG} \tilde \rmX, \Gamma)} \ar@<1ex>[l]^{i_{\rmX}^*} \ar@<1ex>[d]^{\alpha_{m+1}}\\
 {\rmH^{*+2c', \bullet+c'}_{\rmE \tilde \rmG^{gm,m}\times _{\rmG} \rmX}(\rmE \tilde \rmG^{gm,m}\times _{\rmG} \tilde \rmY, \Gamma)} \ar@<1ex>[d]^{\beta_m} & {\rmH^{*+2c', \bullet+c'}_{\rmE \tilde \rmG^{gm,m+1}\times _{\rmG} \rmX}(\rmE \tilde \rmG^{gm,m+1}\times _{\rmG} \tilde \rmY, \Gamma)} \ar@<1ex>[l]^{i_{\rmY}^*} \ar@<1ex>[d]^{\beta_{m+1}}\\
 {\rmH^{*+2c', \bullet+c'}_{\rmE \tilde \rmG^{gm,m}\times _{\rmG} \rmY}(\rmE \tilde \rmG^{gm,m}\times _{\rmG} \tilde \rmY, \Gamma)} & {\rmH^{*+2c', \bullet+c'}_{\rmE \tilde \rmG^{gm,m+1}\times _{\rmG} \rmY}(\rmE \tilde \rmG^{gm,m+1}\times _{\rmG} \tilde \rmY, \Gamma)} \ar@<1ex>[l]^{i_{\rmY}^*}.}\]
 Let the normal bundle associated to the closed immersion
 $\rmE \tilde \rmG^{gm,m}\times _{\rmG} \tilde \rmX \ra \rmE \tilde \rmG^{gm,m}\times _{\rmG} \tilde  \rmY$ be denoted $\cN_m$,
 while we let the normal bundle associated to the closed immersion
 $\rmE \tilde \rmG^{gm,m+1}\times _{\rmG} \tilde \rmX \ra \rmE \tilde \rmG^{gm,m+1}\times _{\rmG} \tilde  \rmY$ be denoted $\cN_{m+1}$.
The map denoted $\alpha_m$ ($\alpha_{m+1}$) is cup-product with the Koszul-Thom class $\lambda_{-1}(\cN_m)$ ($\lambda_{-1}(\cN_{m+1})$, \res).
 Here $i_{\rmX}$ denotes the closed immersion $\rmE \tilde \rmG^{gm,m}\times _{\rmG} \tilde \rmX \ra \rmE \tilde \rmG^{gm,m+1}\times _{\rmG} \tilde \rmX$
 while $i_{\rmY}$ denotes the closed immersion $\rmE \tilde \rmG^{gm,m}\times _{\rmG} \tilde \rmY \ra \rmE \tilde \rmG^{gm,m+1}\times _{\rmG} \tilde \rmY$, \res.
 The commutativity of the top square follows from the observation that $i_Y^*(\lambda_{-1}(\cN_{m+1})) = \lambda_{-1}(\cN_m)$.
 The maps denoted $\beta_m$ and $\beta_{m+1}$ are the obvious maps, and therefore the commutativity of the bottom square is clear.
\end{proof}
\vskip .2cm \noindent
{\bf Proof of Theorem ~\ref{composite.RR.}} Recall that for a tower of fibrations $\{\cX_m \leftarrow \cX_{m+1}|m\}$, the map 
 $\pi_n(\holimm \cX_m) \ra \limm \pi_n(\cX_n)$ is natural for such towers, and therefore compatible with maps of coherently homotopy commutative diagrams of such towers $\{\cX_m \leftarrow \cX_{m+1}|m\}$. We apply this to the 
 commutative diagrams in ~\eqref{RR1.2} and ~\eqref{RR2.2} to obtain the commutative diagrams:
 \fontsize{8}{12}
 \begin{equation}
 \label{RR.3.1}
 \xymatrix{ {\pi_*{\bf G}(\rmX, \rmG)} \ar@<1ex>[r] \ar@<1ex>[d]^{Rf_*} & {\pi_*(\holimm {\bf G}(\rmX, \rmG)\compl_{\rho_{{\tilde \rmG}, \alpha(m)}})} \ar@<1ex>[r]^{\cong} \ar@<1ex>[d]^{Rf_*} &  {\pi_*(\holimm {\bf G}({\rm E}{\tilde \rmG}^{gm,m}{\underset {\rmG} \times}\rmX))} \ar@<1ex>[d]^{Rf_*}\ar@<1ex>[r] & {\limm \pi_*({\bf G}( {\rm E} {\tilde \rmG}^{gm, m} \times \rmX))} \ar@<1ex>[d]^{\limm Rf_*}\\
              {\pi_*{\bf G}(\rmY, \rmG)} \ar@<1ex>[r]  & {\pi_*(\holimm {\bf G}(\rmY, \rmG)\compl_{\rho_{{ \rmG}, \alpha(m)}})} \ar@<1ex>[r]^{\cong}  &  {\pi_*(\holimm {\bf G}({\rm E}{\tilde \rmG}^{gm,m}\times \rmY, \rmG))} \ar@<1ex>[r]^{} & {\limm \pi_*({\bf G}({\rm E}{\tilde \rmG}^{gm,m}{\underset {\rmG} \times}\rmY)}, \mbox{ and}}
 \end{equation}
 \begin{equation}
  \label{RR.3.2}
\xymatrix{ {\pi_*{\bKH}(\rmX, \rmG)} \ar@<1ex>[r] \ar@<1ex>[d]^{Rf_*} & {\pi_*(\holimm {\bKH}(\rmX, \rmG)\compl_{\rho_{{\tilde \rmG}, \alpha(m)}})} \ar@<1ex>[r]^{\cong} \ar@<1ex>[d]^{Rf_*} &  {\pi_*(\holimm {\bKH}({\rm E}{\tilde \rmG}^{gm,m}{\underset {\rmG} \times}\rmX))} \ar@<1ex>[d]^{Rf_*} \ar@<1ex>[r]& {\limm \pi_*({\bKH}( {\rm E} {\tilde \rmG}^{gm, m} \times \rmX))} \ar@<1ex>[d]^{\limm Rf_*}\\
              {\pi_*{\bKH}(\rmY, \rmG)} \ar@<1ex>[r]  & {\pi_*(\holimm {\bKH}(\rmY, \rmG)\compl_{\rho_{{ \rmG}, \alpha(m)}})} \ar@<1ex>[r]^{\cong}  &  {\pi_*(\holimm {\bKH}({\rm E}{\tilde \rmG}^{gm,m}\times \rmY, \rmG))} \ar@<1ex>[r]^{} & {\limm \pi_*({\bKH}({\rm E}{\tilde \rmG}^{gm,m}{\underset {\rmG} \times}\rmY)}.}
\end{equation}             
\normalsize          
Lemma ~\ref{RR.extends.inverse.systems.1} and Corollary ~\eqref{RR.extends.inverse.systems.2} now show that the commutativity of the 
 diagram ~\ref{RR.1.1} follows from the commutativity of the diagram ~\ref{RR.3.1}. These observations complete the proof of the first statement.
 Then one deduces the commutativity of the diagram ~\ref{RR.2.1} from that of the diagram ~\ref{RR.3.2}, just as in
 statement (i). \qed
 \vskip .2cm 
 {\it Observe that Corollary ~\ref{finite.group.case} follows readily} by imbedding the finite group as closed subgroup of a ${\rm GL}_n$ as discussed in the lines before the statement of this Corollary in the introduction.
\vskip .1cm
\begin{remark} It may be important to point out that, in view of the difficulties with 
usual Atiyah-Segal type completion, (for example as in \cite{K}), the only equivariant Riemann-Roch theorems known {\it until now} are only for
Grothendieck groups, unless one assumes that both $\rmX$ and $\rmY$ are projective smooth schemes. 
The above Riemann-Roch theorems extend the Riemann-Roch theorems of Gillet (as in \cite{Gi}) for higher algebraic K-theory and G-theory to the equivariant framework and
to large classes of schemes with group action by linear algebraic groups.
\end{remark}

\section{Lefschtez-Riemann-Roch and applications}
We conclude this discussion by providing a proof of the {\it Lefschetz-Riemann-Roch and coherent trace formula} discussed
 in Theorem ~\ref{LRR}, which extends similar formulae
of Thomason for equivariant topological G-theory: see \cite[Theorem 6.4]{Th86-1}. But first we prove the following localization theorem for actions by diagonalizable groups.
\vskip .1cm
Let $\rmT$ denote a split torus and let $\rmM$ denote its group of characters, with ${\mathbb Z}[\rmM]$ its associated
group ring. For each prime ${\rm p}$ of $\bZ[\rmM]$, there is a minimal subgroup $\rmM_p$ of $\rmM$ so that the prime $\rmp$ is the inverse
 image of a prime in $\bZ[\rmM/\rmM_p]$. Let $\rmT_p$ denote the diagonalizable subgroup ${\rm D}(\rmM/\rmM_p)$. Under this correspondence
 the $(0)$-ideal corresponds to the torus $\rmT$ itself.
\begin{theorem}
 \label{localization.torus.act}
 Let $\rmX$ denote a scheme of finite type over $\k$ provided with an action by $\rmT$. Let $i:\rmX^{\rmT}\ra \rmX$ denote
the closed immersion from the corresponding fixed-point scheme into $\rmX$. Then 
\[(\oplus_n \pi_n(\holimm{\bf G}(\rmT^{gm,m}\times_{\rmT}(\rmX - \rmX^{\rmT}), \rmT)))_{(0)} \cong \{0\} \mbox{ and } (\oplus_n \limm \pi_n({\bf G}(\rmT^{gm,m}\times_{\rmT}(\rmX - \rmX^{\rmT}), \rmT)))_{(0)} \cong \{0\}.\]
\end{theorem}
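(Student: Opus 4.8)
The plan is to adapt Thomason's proof of the localization theorem in equivariant $K$-theory, but to run it uniformly over the whole approximation tower $\{\rmE\rmT^{gm,m}\times\rmW\}_{m}$, where $\rmW:=\rmX-\rmX^{\rmT}$, so as to produce a \emph{single} nonzero element of $\rmR(\rmT)$ that annihilates every homotopy group occurring; localizing at the prime $(0)$ then inverts that element and kills everything. Note that $\rmW$ is a $\rmT$-stable open subscheme of $\rmX$ with $\rmW^{\rmT}=\emptyset$, that $\bG(\rmE\rmT^{gm,m}\times_{\rmT}\rmW)=\bG(\rmE\rmT^{gm,m}\times\rmW,\rmT)$ because $\rmT$ acts freely on $\rmE\rmT^{gm,m}$, and that all the spectra in question are modules over the equivariant $\bG$-theory of a point, hence over $\rmR(\rmT)$, compatibly as $m$ varies; by nilpotent-invariance of $\bG$-theory we may replace the schemes below by their reductions.

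First I would fix a $\rmT$-stable filtration $\rmW=\rmW_{0}\supseteq\rmW_{1}\supseteq\cdots\supseteq\rmW_{N}=\emptyset$ by closed subschemes whose successive locally closed strata $\rmV_{j}:=\rmW_{j}\setminus\rmW_{j+1}$ each carry a $\rmT$-action with a single (constant) stabilizer; since $\rmW^{\rmT}=\emptyset$, this stabilizer is a \emph{proper} diagonalizable subgroup $\rmT_{j}\subsetneq\rmT$ (otherwise $\rmV_{j}$ would lie in $\rmW^{\rmT}$), and on $\rmV_{j}$ the quotient torus $\rmT'_{j}:=\rmT/\rmT_{j}$ acts freely. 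The existence of such a filtration is standard (Noetherian induction together with the generic-slice theorem, cf.\ \cite{Th86-1}). Crossing the whole filtration with $\rmE\rmT^{gm,m}$ and invoking the localization sequences for equivariant $\bG$-theory exhibits, for each fixed $m$, the spectrum $\bG(\rmE\rmT^{gm,m}\times\rmW,\rmT)$ as assembled by finitely many cofiber sequences out of the pieces $\bG(\rmE\rmT^{gm,m}\times\rmV_{j},\rmT)$, and these cofiber sequences are compatible as $m$ varies.

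Next I would bound a single stratum. Write $\rmV:=\rmV_{j}$, $\rmT':=\rmT'_{j}$, whose character lattice $\rmM':=\ker(\rmM(\rmT)\to\rmM(\rmT_{j}))$ is a nonzero sublattice of $\rmM(\rmT)$, so that $\rmR(\rmT')=\bZ[\rmM']\subseteq\bZ[\rmM(\rmT)]=\rmR(\rmT)$; put $\overline{\rmV}:=\rmV/\rmT'$, of dimension $d_{j}=\dim\rmV-\dim\rmT'$, which is \emph{independent of $m$}. Because $\rmT_{j}$ already acts freely on $\rmE\rmT^{gm,m}$, the $\rmT$-action on $\rmE\rmT^{gm,m}\times\rmV$ is free, so $\bG(\rmE\rmT^{gm,m}\times\rmV,\rmT)=\bG\bigl((\rmE\rmT^{gm,m}\times\rmV)/\rmT\bigr)$, and the $\rmT$-invariant composite $\rmE\rmT^{gm,m}\times\rmV\to\rmV\to\overline{\rmV}$ descends to a morphism $q_{m}\colon(\rmE\rmT^{gm,m}\times\rmV)/\rmT\to\overline{\rmV}$. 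For $u\in\rmM'$ the unit $[u]\in\rmR(\rmT')\subseteq\rmR(\rmT)$ acts on this $\bG$-group by tensoring with $q_{m}^{*}\mathcal M_{u}$, where $\mathcal M_{u}$ is the line bundle on $\overline{\rmV}$ associated with the $\rmT'$-torsor $\rmV\to\overline{\rmV}$ and $u$; since $c_{1}$ of a line bundle on a $d_{j}$-dimensional scheme is nilpotent of order $\le d_{j}+1$, we get $([u]-1)^{d_{j}+1}=q_{m}^{*}\bigl(([\mathcal M_{u}]-1)^{d_{j}+1}\bigr)=0$ as an operator, with bound independent of $m$ and of the homotopy degree. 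Hence a fixed power $\rmI_{\rmT'}^{\,e_{j}}$ of the augmentation ideal $\rmI_{\rmT'}\subseteq\rmR(\rmT')$ annihilates $\pi_{*}\bG(\rmE\rmT^{gm,m}\times\rmV_{j},\rmT)$ for all $m$. Picking any nonzero $b_{j}\in\rmI_{\rmT'}^{\,e_{j}}\cdot\rmR(\rmT)$ (possible since $\rmR(\rmT)$ is a domain and $\rmT'\ne1$) and feeding the cofiber sequences of the previous paragraph into their long exact sequences, the nonzero element $b:=\prod_{j}b_{j}\in\rmR(\rmT)$ satisfies $b\cdot\pi_{*}\bG(\rmE\rmT^{gm,m}\times\rmW,\rmT)=0$ for every $m$.

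Finally, multiplication by $b$ is a self-map of $\bG(\rmE\rmT^{gm,m}\times\rmW,\rmT)$ that vanishes on homotopy, compatibly in $m$, so $b$ kills $\limm\pi_{n}\bG(\rmE\rmT^{gm,m}\times\rmW,\rmT)$, and, via the Milnor $\lim^{1}$-sequence, $b^{2}$ kills $\pi_{n}\bigl(\holimm\bG(\rmE\rmT^{gm,m}\times\rmW,\rmT)\bigr)$; since $b$ becomes a unit after localization at $(0)$, both localized graded groups vanish, which is the assertion. I expect the principal obstacle to be precisely this uniformity in $m$: one must verify that the line bundle giving the $\rmR(\rmT)$-action on $\bG\bigl((\rmE\rmT^{gm,m}\times\rmV_{j})/\rmT\bigr)$ is pulled back from the fixed base $\overline{\rmV_{j}}$, so that the nilpotence order of its $c_{1}$ is governed by $\dim\overline{\rmV_{j}}$ and not by the growing dimension of $\rmE\rmT^{gm,m}\times_{\rmT}\rmV_{j}$, and that the localization sequences and $\rmR(\rmT)$-module structures are genuinely natural across the whole tower, so that a single element $b$ works for all $m$ at once; the remaining ingredients (the stratification, the nilpotence of $c_{1}$ of line bundles, and the Milnor sequence) are standard.
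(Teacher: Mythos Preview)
Your proof is correct and rests on the same underlying idea as the paper—reduce via the torus generic-slice theorem to pieces on which a nontrivial quotient torus acts freely, and observe that localization at $(0)$ kills those pieces—but your execution is organized differently and is in some respects more explicit.

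The paper argues by Noetherian induction on the Krull dimension of $\rmX-\rmX^{\rmT}$: it applies Thomason's generic-slice theorem once to extract a nonempty open $\rmT$-stable $\rmU\subseteq\rmX-\rmX^{\rmT}$ on which $\rmT$ acts through a nontrivial quotient $\rmT''$ with $\rmU\cong\rmT''\times\rmU/\rmT$, identifies $\rmE\rmT^{gm,m}\times_{\rmT}\rmU$ as (a product involving) $\rmB\rmT'^{gm,m}\times\rmU/\rmT$, asserts that the localized $\holimm$ vanishes there, and finishes via the localization sequence and the inductive hypothesis on the closed complement. The second statement is then deduced from the first using that $\oplus_n\limm\pi_n$ is a module over $\oplus_n\pi_n\holimm$.

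You instead fix a complete stratification at the outset and, for each stratum $\rmV_j$, exhibit a single nonzero element $b_j\in\rmR(\rmT)$ annihilating $\pi_*\bG(\rmE\rmT^{gm,m}\times\rmV_j,\rmT)$ \emph{uniformly in $m$}, by showing the relevant line bundles are pulled back along $q_m$ from the fixed base $\overline{\rmV_j}$ and hence have $c_1$ nilpotent of order bounded by $\dim\overline{\rmV_j}+1$. Multiplying the $b_j$ and passing through the Milnor $\lim^{1}$-sequence handles both the $\holimm$ and $\limm$ statements simultaneously. The paper's route is shorter, but yours makes the uniformity in $m$ fully transparent: the paper's vanishing claim on the slice implicitly uses that the $\rmR(\rmT'')$-action factors through the augmentation, which is exactly what your pullback computation renders precise.
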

\begin{proof}
 We will assume that conclusion holds for all
schemes $\rmZ$ on which $\rmT$ acts with no fixed points and where the Krull dimension of Z is less than the Krull dimension of
$\rmX - \rmX^{\rmT}$. Next assume that $\rmX - \rmX^{\rmT}$ is connected. Let $\rmU$ denote a non-empty open and $\rmT$-stable
 subscheme of $\rmX - \rmX^{\rmT}$. Now consider the localization sequence, where $\rmZ = (\rmX - \rmX^{\rmT}) - \rmU$:
\be \begin{multline}
     \begin{split}
    \label{local.seq}
  \cdots \ra (\oplus_n \pi_n(\holimm {\bf G}(\rmT^{gm,m}\times_{\rmT}\rmZ, \rmT)))_{(0)} \ra (\oplus_n \pi_n(\holimm {\bf G}(\rmT^{gm,m}\times_{\rmT}(\rmX - \rmX^{\rmT}), \rmT)))_{(0)}\\
  \ra (\oplus_n \pi_n(\holimm {\bf G}(\rmT^{gm,m}\times_{\rmT}\rmU, \rmT)))_{(0)} \ra \cdots.
\end{split}  \end{multline} \ee
This localization sequence and the inductive assumption shows that it suffices to prove the theorem 
with $\rmX - \rmX^{\rmT}$ replaced by any open and $\rmT$-stable subscheme $\rmU$ of $\rmX- \rmX^{\rmT}$. At this point one
invokes the {\it Torus generic slice theorem} in \cite[Proposition 4.10]{Th86-2} to conclude that the following hold:
\begin{enumerate}[\rm(i)]
\item there exists a nonempty
open and $\rmT$-stable subscheme $\rmU$ of $\rmX$ and a sub-torus $\rmT'$ of $\rmT$, with quotient torus 
$\rmT/\rmT'= \rmT''$ which is non-trivial, and  so that
$\rmT$ acts on $\rmU$ through its quotient $\rmT''$, 
\item the resulting action of $\rmT''$ on $\rmU$ is free and 
\item $\rmU$ is
isomorphic as a $\rmT$-scheme to $\rmT'' \times \rmU/\rmT$, with $\rmU/\rmT$ denoting a geometric quotient.
\end{enumerate}
Therefore, $\rmE\rmT^{gm,m}\times_{\rmT} \rmU \cong \rmE\rmT'' \times \rmB\rmT'^{gm, m}  \times \rmU/\rmT$, for all $m$.
It follows that $\holimm {\bf G}(\rmE\rmT^{gm,m}\times_{\rmT}\rmU, \rmT) \simeq \holimm {\bf G}(\rmB\rmT'^{gm,m} \times \rmU/\rmT)$ and hence $(\oplus_n \pi_n(\holimm {\bf G}(\rmE\rmT^{gm,m}\times_{\rmT}\rmU, \rmT)))_{(0)} \cong \{0\}$. 
Therefore, by the inductive assumption on the Krull dimension of $\rmX$ and by the localization sequence ~\eqref{local.seq},
\[(\oplus_n \pi_n(\holimm {\bf G}(\rmE\rmT^{gm,m}\times_{\rmT}(\rmX - \rmX ^{\rmT}), \rmT)))_{(0)} \cong \{0\}.\]
In case $\rmX- \rmX^{\rmT}$ is not connected, an ascending induction on the number of connected components
and a similar argument using the same localization sequence will prove the same result in general.
One may observe that $\oplus_n \limm \pi_n({\bf G}(\rmE\rmT^{gm,m}\times_{\rmT}(\rmX - \rmX^{\rmT}), \rmT))$ is a module
over $\oplus_n \pi_n(\holimm{\bf G}(\rmE\rmT^{gm,m}\times_{\rmT}(\rmX - \rmX^{\rmT}), \rmT)))$ to obtain the 
second equality. 
\end{proof}
\begin{corollary}
 Let $\rmZ$ denote a regular scheme of finite type over $\k$ provided with the action of a split torus $\rmT$. Let $i: \rmZ^{\rmT} \ra \rmZ$ denote
 the closed immersion of the corresponding fixed point scheme and let $\cN$ denote the conormal sheaf associated to the closed immersion $i$.
 Then the class $\lambda_{-1}(\cN)$ is a unit in $\pi_{0}(\bK(\rmZ^{\rmT}, \rmT))_{(0)}$ and the localized Gysin map
 \[i_*: \pi_*({\bf G}(\rmZ^{\rmT}, \rmT))_{(0)} \ra \pi_*({\bf G}(\rmZ, \rmT))_{(0)}\]
 is injective  with a left-inverse given by the map ${\mathcal F} \mapsto {\rm L} i^*(\mathcal F ) \cap \lambda_{-1}(\cN)^{-1}$.
 Moreover the localized Gysin map
 \[i_*:\pi_*(\holimm {\bf G}(\rmE \rmT^{gm,m} \times_{\rmT}\rmZ^{\rmT}, \rmT))_{(0)}  \ra \pi_*(\holimm {\bf G}(\rmE \rmT^{gm,m} \times_{\rmT}\rmZ, \rmT))_{(0)}  \]
 is an isomorphism, with a left-inverse given by the map ${\mathcal F} \mapsto {\rm L} i^*(\mathcal F ) \cap \lambda_{-1}(\cN)^{-1}$.
\end{corollary}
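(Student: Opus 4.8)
The plan is to deduce all three assertions from the self-intersection (Koszul) formula, combined for the last one with Theorem~\ref{localization.torus.act}. I would begin with two preliminary reductions: since $\rmZ$ is regular and the split torus $\rmT$ is linearly reductive, the fixed-point scheme $\rmZ^{\rmT}$ is regular as well, so $i$ is a \emph{regular} closed immersion, $Li^{*}$ is defined on $\rmT$-equivariant $\bf G$-theory, and $\bf G$-theory agrees with $\bK$-theory on both $\rmZ$ and $\rmZ^{\rmT}$, making the cap-product module structures of the statement available; and, since $\rmT$ acts trivially on $\rmZ^{\rmT}$, one has $\pi_{0}(\bK(\rmZ^{\rmT},\rmT))=\pi_{0}(\bK(\rmZ^{\rmT}))\otimes_{\bZ}R(\rmT)$ with $R(\rmT)=\bZ[\rmM]$ a domain, so that localizing at the prime $(0)$ gives $\pi_{0}(\bK(\rmZ^{\rmT},\rmT))_{(0)}=\pi_{0}(\bK(\rmZ^{\rmT}))\otimes_{\bZ}\operatorname{Frac}(\bZ[\rmM])$.

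To show that $\lambda_{-1}(\cN)$ is a unit there, I would decompose the conormal bundle into $\rmT$-weight spaces, $\cN=\bigoplus_{\chi\ne 0}\cN_{\chi}$ (no trivial weight occurs, $\cN$ being conormal to a fixed locus), so $\lambda_{-1}(\cN)=\prod_{\chi}\lambda_{-1}(\cN_{\chi}\otimes\chi)$. Working modulo the kernel of the rank homomorphism on $\pi_{0}(\bK(\rmZ^{\rmT}))$, which is a nil ideal because the $\gamma$-filtration on the finite-type regular scheme $\rmZ^{\rmT}$ is finite, each factor reduces to $(1-e^{\chi})^{\operatorname{rk}\cN_{\chi}}$, a unit of $\operatorname{Frac}(\bZ[\rmM])$ since $\chi\ne 0$; an element that is a unit modulo a nil ideal is a unit, and a product of units is a unit, so $\lambda_{-1}(\cN)^{-1}$ exists in $\pi_{0}(\bK(\rmZ^{\rmT},\rmT))_{(0)}$.

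The next step is the self-intersection computation: the Koszul resolution of $i_{*}\O_{\rmZ^{\rmT}}$ by locally free $\O_{\rmZ}$-modules gives $Li^{*}i_{*}\O_{\rmZ^{\rmT}}\simeq\bigoplus_{j}\Lambda^{j}\cN[j]$, hence $[Li^{*}i_{*}\O_{\rmZ^{\rmT}}]=\lambda_{-1}(\cN)$, and, using the change-of-rings identity $Li^{*}i_{*}\mathcal F\simeq\mathcal F\otimes^{L}Li^{*}i_{*}\O_{\rmZ^{\rmT}}$ at the level of the $\bf G$-theory spectrum, the composite
\[\pi_{*}({\bf G}(\rmZ^{\rmT},\rmT))_{(0)}\xrightarrow{\ i_{*}\ }\pi_{*}({\bf G}(\rmZ,\rmT))_{(0)}\xrightarrow{\ Li^{*}(-)\cap\lambda_{-1}(\cN)^{-1}\ }\pi_{*}({\bf G}(\rmZ^{\rmT},\rmT))_{(0)}\]
is multiplication by $\lambda_{-1}(\cN)\cdot\lambda_{-1}(\cN)^{-1}=\operatorname{id}$; this is the coherent form of Thomason's self-intersection formula (cf. \cite{Th86-1}) and gives the split injectivity of $i_{*}$ with the asserted left-inverse. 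I would then run the identical argument levelwise on the regular closed immersion $\rmE\rmT^{gm,m}\times_{\rmT}\rmZ^{\rmT}\hookrightarrow\rmE\rmT^{gm,m}\times_{\rmT}\rmZ$ — whose conormal bundle is the pullback of $\cN$, with $\lambda_{-1}(\cN)$ again invertible after localizing at $(0)$ by the fibrewise weight argument — and pass to $\holimm$, which yields split injectivity of $i_{*}$ on $\pi_{*}(\holimm{\bf G}(\rmE\rmT^{gm,m}\times_{\rmT}\rmZ^{\rmT},\rmT))_{(0)}$ with the same left-inverse.

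It remains to establish surjectivity of this last map, and for that I would invoke the open--closed decomposition $\rmZ=\rmZ^{\rmT}\sqcup(\rmZ-\rmZ^{\rmT})$: levelwise it produces a localization fibration sequence among the $\bf G$-theory spectra of the schemes $\rmE\rmT^{gm,m}\times_{\rmT}(-)$, these assemble into a tower of fibration sequences, and applying $\holimm$ (which carries such towers to fibration sequences) and then localizing at $(0)$ (an exact operation) produces a long exact sequence whose terms attached to $\rmZ-\rmZ^{\rmT}$ vanish by Theorem~\ref{localization.torus.act}; hence $i_{*}$ is an isomorphism, necessarily two-sided inverse to the left-inverse above. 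The part I expect to demand the most care is the self-intersection/projection formula \emph{as a statement about the whole $\bf G$-theory spectrum} — so that it controls all of $\pi_{*}$, respects the $R(\rmT)$-module structure, and is natural enough to survive the homotopy inverse limit — together with the compatibility of the levelwise localization fibration sequences with $\holimm$; the weight-space computation and the nilpotence of the reduced Grothendieck group are then routine.
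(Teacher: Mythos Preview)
Your proposal is correct and follows essentially the same route as the paper: the self-intersection formula $Li^{*}\circ i_{*}=\lambda_{-1}(\cN)\cdot(-)$ plus invertibility of $\lambda_{-1}(\cN)$ in $\pi_{0}(\bK(\rmZ^{\rmT},\rmT))_{(0)}$ for the split injectivity, and the localization sequence together with Theorem~\ref{localization.torus.act} for the isomorphism on the Borel construction. The only difference is one of detail: the paper simply cites \cite[Lemma 6.3]{Th86-1} for the unit statement, whereas you reconstruct that argument via the weight decomposition of $\cN$ and nilpotence of the reduced Grothendieck group; and you are more explicit about the levelwise passage to $\holimm$, which the paper leaves implicit.
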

\begin{proof} It is well-known that the composition of the Gysin map $i_*$ and ${\rm L}i^*$ is multiplication by the class $\lambda_{-1}(\cN)$.  See \cite[Lemma 6.3]{Th86-1} for a proof that $\lambda_{-1}(\cN)$ is a unit 
in $\pi_{0}(\bK(\rmZ^{\rmT}, \rmT))_{(0)}$. The first
statement follows from these observations.
That the Gysin map in the second statement is an isomorphism follows from
Theorem ~\ref{localization.torus.act}, using the localization sequence for the open immersion 
$\rmZ - \rmZ^{\rmT} \ra \rmZ$, with complement $\rmZ^{\rmT}$.
 \end{proof}
\vskip .1cm \noindent
{\bf Proof of Theorem ~\ref{LRR}: The Lefschetz-Riemann-Roch and the Coherent trace formula.}
We quickly recall the framework of this theorem.
  Let $f: \rmX \ra \rmY$ denote a $\rmT$-equivariant proper map between two schemes of finite type over $\k$ and provided with
  actions by split torus $\rmT$. Assume that $f$ factors $\rmT$-equivariantly as $\pi \circ i_1$, where $i_1: \rmX \ra \rmZ$ is
  closed immersion into a regular $\rmT$-scheme $\rmZ$ followed by a proper $\rmT$-equivariant map $\pi: \rmZ \ra \rm Y$. Let $i: \rmZ^{\rmT} \ra \rmZ$ denote
  the closed immersion of the fixed point scheme of $\rmZ$ into $\rmZ$.
  Let ${\mathcal F}$ denote a $\rmT$-equivariant coherent sheaf on $\rmX$ and let 
  \[{\mathcal G} = Li^*(i_{1*}({\mathcal F})){\underset {\O_{\rmZ^{\rmT}}} \otimes} (\lambda_{-1}(\cN))^{-1}.\]
 One may readily reduce to the case $\rmZ = \rmX$, by replacing ${\mathcal F}$ by $i_{1*}({\mathcal F})$. Therefore, we will assume $f = \pi$ in what follows. Then the
 proof of the equality in (i) results from the commutative diagram:
 \be \begin{equation}
  \label{Coh.tr.diagm}
 \xymatrix{  {\pi_*\bG(\rmZ, \rmT)_{(0)}} \ar@<1ex>[dd]^{R\pi_*} \ar@<1ex>[dr] \ar@<1ex>[rrr]^{i^* (\quad) \cap \lambda_{-1}(\cN)^{-1}}  &&&  {\pi_*({\bf G}(\rmZ^{\rmT}, \rmT))_{(0)}} \ar@<1ex>[dl] \ar@<1ex>[dd]^{\pi^{\rmT}_*}\\
  &{\pi_*( \bG(\rmE\rmT\times_{\rmT}\rmZ))_{(0)}} \ar@<1ex>[r]^{i^*(\quad) \cap \lambda_{-1}(\cN)^{-1}} \ar@<1ex>[dd]^{R\pi_*} & {\pi_*(\bG(\rmE\rmT\times_{\rmT}\rmZ^{\rmT}))_{(0)}}  \ar@<1ex>[l]^{i_*}_{\cong} \ar@<1ex>[dd]^{\pi^{\rmT}_*} \\
 {\pi_*\bG(\rmY, \rmT)_{(0)}} \ar@<1ex>[dr]    &&&   {\pi_*(\bG(\rmY^{\rmT}, \rmT))_{(0)}}  \ar@<1ex>[dl]\\ 
   &{\pi_*(\bG(\rmE\rmT\times_{\rmT}\rmY)_{(0)}}  &{\pi_*(\bG(\rmE\rmT\times_{\rmT}\rmY^{\rmT})_{(0)}} \ar@<1ex>[l]^{i_*} }
 \end{equation} \ee
 where we have let $\bG(\rmE\rmT\times_{\rmT}V) = \holimm \bG(\rmE\rmT^{gm.m}\times_{\rmT}V)$ for a scheme
 $\rmV$ with an action by $\rmT$. 
\vskip .1cm
 If $\rmX$ is $\rmT$-equivariantly quasi-projective, one may factor the map $f$ as the composite of the closed immersion $\rmX \ra \rmY \times {\mathbb P}^n$
 for some large enough projective space on which the torus $\rmT$ acts and the projection $\rmY \times {\mathbb P}^n \ra \rmY$. Therefore, when $\rmY$ is regular, one may take $\rmZ = \rmY \times {\mathbb P}^n$ in this case.
 When $\rmX$ is normal, it is $\rmT$-quasi-projective by \cite[Theorem  2.5]{SumII}. This proves (ii).
 \vskip .1cm
 Next we consider (iii). A key observation now is that, since the $\rmT$-action on $\rmY$ is assumed to be trivial,  $\pi_0(\bG(\rmE\rmT\times_{\rmT}\rmY)) = {\mathbb Z}[\rmT]\compl_{\rmI_{\rmT}} \otimes \pi_0(\bG(\rmY))$. 
 This follows from the observation that $\rmB\rmT^{gm,m} = ({\mathbb P}^m)^n$, if $\rmT ={\mathbb G}_m^n$, and therefore $\limm^1\pi_1(\bG(\rmB\rmT^{gm,m} \times \rmY))=0$.
 Moreover
 $ {\mathbb Z}[\rmT]\compl_{\rmI_{\rmT}} $ is a Laurent formal power series ring, and is therefore torsion-free over ${\mathbb Z}[\rmT]$. 
 As the classes that are inverted on taking the localization at the prime ideal $(0)$ are non-zero divisors, equality in ~\eqref{equality}
  holds in $\pi_0(\holimm {\bf G}(\rmE\rmT^{gm,m}\times_{\rmT} \rmY))$, if it holds after localization at the ideal $(0)$.
 (See \cite[Corollary 6.5]{Th86-2}, for example.) This  completes the proof of (iii). (iv) and the extension to
  equivariant homotopy K-theory in (v) are now clear.
\qed
\vskip .2cm \noindent
{\bf Proof of Corollary} ~\ref{LRR.cor} The first two statement should be clear by applying Riemann-Roch transformations into an equivariant Borel-Moore homology theory. 
The third statement follows simply by taking ${\mathcal F}$ in Corollary ~\ref{LRR.cor}(i) to be $\O_{\rmX}$.
\vskip .2cm  \noindent
{\bf Proof of Corollary} ~\ref{Dem.toric} (i) This follows readily from Theorem ~\ref{LRR}(iv) by taking $\rmX$ there to be the Schubert variety $\rmX_{\it w}$ and $\rmY = \Speck$.
Observe that the $\rmT$-fixed points on $\rmX_{\it w}$ correspond to $\{{\it w}' \in {\rm W}|{\it w'} \le {\it w}\}$. Therefore, expanding out ${\mathcal G}$, one gets the 
term $\rmH^0(\rmX_{\it w}, {\mathcal G}) = \Sigma_{\it w' \le w} a({\it w', w})e^{{\it w}'\lambda}$, for some coefficients ${\rm a}({\it w', w})$. To show that this gives the required expression, one may invoke
\cite[Lemma 6.13]{Th86-1}. Though the above argument is similar to the one in \cite[pp. 541-542]{Th86-1},  
it needs to be pointed out that as Thomason works there with G-theory with finite coefficients and with 
the Bott-element inverted, he has to work considerably harder to show the required identity holds in the integral representation ring $\rmR(\rmT)$.
Our proof is more direct, as we are working with the $\pi_0(\bG(\Speck, \rmT))$. Observe that when ${\it w}$ is the longest element in the Weyl group ${\rm W}$,
$\rmX_{\it w} = \rmG/\rmB$: in this case the above formula is just the Weyl character formula. \qed
\vskip .1cm

  \section{Computing the homotopy groups of the derived completion in terms of equivariant Borel-Moore motivic cohomology}
\vskip .2cm \noindent
{\bf Proof of Theorem ~\ref{compute.der.compl}}
  Assume that $\rmX$ is a $\rmG$-scheme as in Definition ~\ref{G.quasiproj}, with $\rmG= \rmT$ a split torus. Then the approximation $\rmE\rmG^{gm,m}{\underset {\rmG} \times}\rmX$ is always a scheme,  and it is smooth when $\rmX$ is smooth.
  We will first assume that $\rmX$ is in fact smooth.
  Therefore, for a fixed value of $-s-t$, the homotopy groups of its $\bG$-theory may be
  computed using the spectral sequence with $m$ sufficiently large:
  \[E_2^{s,t} = \H_{\rm BM,M}^{s-t}(\rmE\rmG^{gm,m}{\underset {\rmG} \times}\rmX, {\mathbb Z}(-t)) \Ra \pi_{-s-t}(\bG(\rmE\rmG^{gm,m}{\underset {\rmG} \times}\rmX)) \cong \pi_{-s-t}(\bG(\rmX, \rmG)\compl_{\rho_{\tilde \rmG, m}}). \]
The last isomorphism is from \cite[Theorem 5.6(i)]{CJ23} and $\H_{\rm BM,M}$ denotes Borel-Moore motivic cohomology, which identifies with the higher equivariant Chow groups. This identification shows that the $E_2^{s,t}=0$ for all but finitely many values of $s$ and $t$, once $-s-t$ is fixed.  Therefore, 
 the above spectral sequence converges strongly, and the $E_2$-terms, and hence the abutment, will be independent of $m$, if $m>>0$.
\vskip .2cm
We proceed to show the existence of a spectral sequence as above even when the scheme $\rmX$ is singular, provided it satisfies the 
assumption that it is $\rmG$-quasi-projective as in Definition ~\ref{G.quasiproj}. In this case, the hypothesis in Definition ~\eqref{G.quasiproj} shows that one 
may find a $\rmG$-equivariant closed immersion of $\rmX$ into a smooth scheme $\tilde \rmX$ with open complement $\tilde \rmU$. 
This provides the commutative diagram ~\eqref{exact.couple.0}, where $s_{\le q}\bK$ denotes the $q$-th slice of the spectrum $\bK$. 

\be \begin{equation} 
\label{exact.couple.0}
\xymatrix{{Map(\Sigma_{\rmS^1}\rmE\rmG^{gm,m}{\underset {\rmG} \times}\tilde \rmX_+, {\bH}({\mathbb Z}(q)[2q]))} \ar@<1ex>[r] \ar@<1ex>[d] & {Map(\Sigma_{\rmS^1}\rmE\rmG^{gm,m}{\underset {\rmG} \times}\tilde \rmU_+, {\bH}({\mathbb Z}(q)[2q]))} \ar@<1ex>[d]\\
          {Map(\Sigma_{\rmS^1}\rmE\rmG^{gm,m}{\underset {\rmG} \times}\tilde \rmX_+, s_{\le q}\bK )} \ar@<1ex>[r] \ar@<1ex>[d] & {Map(\Sigma_{\rmS^1}\rmE\rmG^{gm,m}{\underset {\rmG} \times}\tilde \rmU_+, s_{\le q}\bK)} \ar@<1ex>[d]\\
          {Map(\Sigma_{\rmS^1}\rmE\rmG^{gm,m}{\underset {\rmG} \times}\tilde \rmX_+, s_{\le q-1}\bK)} \ar@<1ex>[r]  & {Map(\Sigma_{\rmS^1}\rmE\rmG^{gm,m}{\underset {\rmG} \times}\tilde \rmU_+, s_{\le q-1}\bK)}}
\end{equation} \ee
Taking the homotopy fiber of the horizontal maps in each row now provides the fiber sequence:
\be \begin{multline}
  \begin{split}
  \label{exact.couple.1}
{Map(\Sigma_{\rmS^1}\rmE\rmG^{gm,m}{\underset {\rmG} \times}\tilde \rmX_+, \Sigma_{\rmS^1}\rmE\rmG^{gm,m}{\underset {\rmG} \times}\tilde \rmU_+, {\bH}({\mathbb Z}(q)[2q]))} \ra  {Map(\Sigma_{\rmS^1}\rmE\rmG^{gm,m}{\underset {\rmG} \times}\tilde \rmX_+, \Sigma_{\rmS^1}\rmE\rmG^{gm,m}{\underset {\rmG} \times}\tilde \rmU_+,  s_{\le q}\bK))} \\ 
\ra {Map(\Sigma_{\rmS^1}\rmE\rmG^{gm,m}{\underset {\rmG} \times}\tilde \rmX_+, \Sigma_{\rmS^1}\rmE\rmG^{gm,m}{\underset {\rmG} \times}\tilde \rmU_+, s_{\le q-1}\bK)}.
   \end{split}
\end{multline} \ee
The exact couple is provided by the above fiber sequence. The same reasoning as above in the case when $\rmX$ is smooth shows the spectral sequence converges strongly and has the required $E_2$-terms. This completes the proof of the Theorem. \qed
 \vskip .2cm
 \begin{remarks}
 {\rm For more general groups other than a split torus, there is still
  a similar spectral sequence (see \cite[Theorem 4.7]{CJ23}), but only for the full $\rmE\rmG^{gm}{\underset {\rmG} \times}\rmX$. The exact couple for this spectral sequence is obtained by taking the 
  homotopy inverse limit of the tower of fibrations in ~\eqref{exact.couple.1}. This spectral sequence converges only conditionally, and computes the homotopy groups of the full derived completion.}
 \qed
 \end{remarks}
 \subsection{Toric varieties, Spherical varieties, Linear varieties and proof of Corollary ~\ref{toric.sph} }
\label{Toric.Spherical.vars}
 \vskip .1cm
 Here a key observation is that {\it quasi-projective toric varieties} are defined to be {\it normal} quasi-projective varieties on
 which a split torus $\rmT$ acts with finitely many orbits. Therefore, by \cite[Theorem  1]{SumI} (and \cite[Theorem 2.5]{SumII},
 such toric varieties are $\rmT$-quasi-projective.
 This already includes many familiar varieties: see \cite{Oda}.
 \vskip .1cm
 Given a connected split reductive group $\rmG$, a {\it quasi-projective $\rmG$-spherical variety} is defined to be a normal quasi-projective $\rmG$-variety 
 on which $\rmG$, as well as a Borel subgroup of $\rmG$, have finitely many orbits. (See \cite{Tim}.) This class of varieties clearly includes all 
 toric varieties (when the group is a split torus), but also includes many other varieties. The discussion in
 the last paragraph above, now shows these are also $\rmG$-quasi-projective varieties. {\it  These observations  readily complete the proof of Corollary ~\ref{toric.sph}.}
  \vskip .2cm
  \begin{remarks} {\rm 
  The counter-example in \cite[6.1.1]{CJ23} shows that for large classes of toric and spherical varieties,
  the equivariant algebraic K-theory with finite coefficients away from the characteristic is not isomorphic to
  the corresponding equivariant K-theory with the Bott element inverted, so that Thomason's theorem does not provide
  a completion theorem for equivariant K-theory unless the Bott element is inverted. Neither does \cite[Theorem 1.2]{K} 
  unless these varieties are projective and smooth (and therefore, stratified by affine spaces). 
  \vskip .1cm
  Finally we recall that {\it linear varieties} are those varieties that have a stratification where each stratum is a product of an
  affine space and a split torus.  
  In view of the existence of a localization sequence for equivariant ${\bf G}$-theory and equivariant homotopy K-theory, 
  it is clear the $\rmG$-equivariant $\bG$-theory and homotopy K-theory
  of such varieties can be readily understood in terms of the corresponding $\rmG$-equivariant $\bG$-theory and homotopy K-theory of 
  the strata. 
  Examples of such varieties are normal quasi-projective varieties on which a connected split solvable group
  acts with finitely many orbits. (See \cite[p. 119]{Rosen}.)
  }
  \end{remarks}


\end{document}